\newcommand{\mbP}{\mathbb P}
\newcommand{\mbZ}{\mathbb Z}
\newcommand{\mbC}{\mathbb C}
\newcommand{\oM}{\overline{\mathcal M}}
\def\oM{{\overline{\mathcal{M}}}}
\def\mbQ{{\mathbb Q}}
\def\d{{\partial}}
\renewcommand{\>}{\right>}
\newcommand{\eps}{\varepsilon}
\newcommand{\cA}{\mathcal A}
\newcommand{\hcA}{\widehat{\mathcal A}}
\newcommand{\DR}{\mathrm{DR}}
\newcommand{\even}{\mathrm{even}}
\newcommand{\Coef}{\mathrm{Coef}}
\newcommand{\Deg}{\mathrm{Deg}}
\newcommand{\Ch}{\mathrm{Ch}}
\newcommand{\gl}{\mathrm{gl}}
\newcommand{\hR}{\widehat{R}}
\newcommand{\tQ}{\widetilde{Q}}
\DeclareMathOperator{\res}{res}
\newcommand{\ext}{\mathrm{ext}}
\newcommand{\mcC}{\mathcal{C}}
\newcommand{\mcF}{\mathcal{F}}
\newcommand{\mcL}{\mathcal{L}}
\newcommand{\vir}{\mathrm{vir}}
\newcommand{\mfc}{\mathfrak{c}}
\newcommand{\mbE}{\mathbb{E}}
\newcommand{\hT}{\widehat{T}}
\newcommand{\DK}{\mathrm{DK}}
\newcommand{\tw}{\widetilde{w}}
\newcommand{\hX}{\widehat{X}}
\newcommand{\valpha}{\vec{\alpha}}
\DeclareMathOperator{\odeg}{\overline{\mathrm{deg}}}
\newcommand{\tB}{\widetilde{B}}
\newcommand{\tb}{\widetilde{b}}
\newcommand{\ev}{\mathrm{ev}}
\newcommand{\hL}{\widehat{L}}
\newcommand{\mcP}{\mathcal{P}}
\newcommand{\vd}{\mathrm{vd}}
\newcommand{\Td}{\mathrm{Td}}
\newcommand{\rk}{\mathrm{rk}}
\newcommand{\tcL}{\widetilde{\mathcal{L}}}
\newcommand{\mcD}{\mathcal{D}}
\newcommand{\tcC}{\widetilde{\mathcal{C}}}
\newcommand{\norm}{\mathrm{norm}}
\newcommand{\tpi}{\widetilde{\pi}}
\newcommand{\tG}{\widetilde{G}}
\newcommand{\irr}{\mathrm{irr}}
\newcommand{\mcO}{\mathcal{O}}
\newcommand{\tf}{\widetilde{f}}
\newcommand{\tc}{\widetilde{c}}
\newtheorem{theorem}{Theorem}[section]
\newtheorem{proposition}[theorem]{Proposition}
\newtheorem{lemma}[theorem]{Lemma}
\newtheorem{conjecture}[theorem]{Conjecture}
\newtheorem{definition}{Definition}[section]
\newtheorem{remark}[theorem]{Remark}
\numberwithin{equation}{section}
\begin{document}

\title{Extended $r$-spin theory in all genera and the discrete KdV hierarchy}

\author{Alexandr Buryak}
\address{A. Buryak:\newline 
Faculty of Mathematics, National Research University Higher School of Economics, \newline
6 Usacheva str., Moscow, 119048, Russian Federation;\smallskip\newline 
Center for Advanced Studies, Skolkovo Institute of Science and Technology, \newline
1 Nobel str., Moscow, 143026, Russian Federation}
\email{aburyak@hse.ru}

\author{Paolo Rossi}
\address{P.~Rossi:\newline Dipartimento di Matematica ``Tullio Levi-Civita'', Universit\`a degli Studi di Padova,\newline
Via Trieste 63, 35121 Padova, Italy}
\email{paolo.rossi@math.unipd.it}

\begin{abstract}
In this paper we construct a family of cohomology classes on the moduli space of stable curves generalizing Witten's $r$-spin classes. They are parameterized by a phase space which has one extra dimension and in genus $0$ they correspond to the extended $r$-spin classes appearing in the computation of intersection numbers on the moduli space of open Riemann surfaces, while when restricted to the usual smaller phase space, they give in all genera the product of the top Hodge class by the $r$-spin class. They do not form a cohomological field theory, but a more general object which we call F-CohFT, since in genus $0$ it corresponds to a flat F-manifold. For $r=2$ we prove that the partition function of such F-CohFT gives a solution of the discrete KdV hierarchy. Moreover the same integrable system also appears as its double ramification hierarchy.
\end{abstract}

\date{\today}

\maketitle

\tableofcontents

\section{Introduction}

The moduli space of $r$-spin curves, parameterizing stable curves $C$ with $n$ marked points $p_1,\ldots,p_n\in C$ together with an $r$-th root, $r\geq 2$, of the twisted canonical bundle $\omega_C(\sum_{i=1}^n (1-\alpha_i)p_i)$, $\alpha_i \in \mbZ$, has a rich geometric structure and has recently proved to be a central tool in the study of the cohomology of the moduli space of stable curves $\oM_{g,n}$.\\

Witten's $r$-spin classes~\cite{Wit93,PV01,Chi06,Moc06,FJR13} are cohomological field theories on $\oM_{g,n}$, constructed out of the geometry of $r$-spin moduli spaces, and their intersection theory was shown in \cite{FSZ10} to be controlled by the Gelfand-Dickey $(r-1)$-KdV hierarchy, as previously conjectured in \cite{Wit93}.\\

As an example of the richness of such objects, in \cite{PPZ15}, $3$-spin classes were used to produce and prove a large system of relations in the tautological subring of the cohomology ring of~$\oM_{g,n}$. This system contains all other previously known relations between the generators of the tautological ring and is in fact conjectured to be a complete system, thus yielding an explicit description of the tautological ring itself.\\

The present paper starts with the observation that, in genus $0$, from results of \cite{JKV01} and~\cite{BCT17}, the construction of $r$-spin cohomological field theories can be extended to systems of cohomology classes on $\oM_{0,n}$ with phase space $1$-dimensionally bigger and satisfying a system of axioms corresponding to the structure of a flat F-manifold \cite{Ma05}, as opposed to the usual Frobenius manifold associated to genus $0$ CohFTs.\\

Given also the relevance of such genus $0$ extended $r$-spin classes in describing the intersection theory of moduli spaces of Riemann surfaces with boundary \cite{PST14,ST,Tes15}, first observed in \cite{BCT17}, we were motivated to look for a higher genus generalization of extended $r$-spin classes.\\

As we said, already in genus $0$, the extended $r$-spin theory is not quite a cohomological field theory/Frobenius manifold. However the notion of flat F-manifold can be naturally extended to higher genus as a system of cohomology classes on $\oM_{g,n}$ satisfying a set of axioms that is correspondingly weaker than those of a CohFT. We introduce this general notion in Section~\ref{section:F-CohFT} and we call it an F-cohomological field theory.\\

We then proceed to construct a specific (homogeneous) F-CohFT that reduces, in genus $0$, to the extended $r$-spin classes. The construction is based on a generalization of a formula of J. Gu\'er\'e \cite{Gue17}. Gu\'er\'e considers two elements in the $K$-theory of the moduli space of $r$-spin curves: the derived push-forward of the universal $r$-th root bundle from the universal curve to the moduli space and the Hodge bundle. Then he constructs a certain combination of characteristic classes of such two objects, depending on a parameter and well defined in the $K$-theory, and shows that, in a certain limit of the parameter and after push-forward to $\oM_{g,n}$, this explicit formula recovers the product of the $r$-spin class by the top Chern class of the Hodge bundle.\\

It turns out that such construction can be generalized to the extended phase space of extended $r$-spin theory giving, for generic value of the parameter, an actual cohomological field theory which reduces, in the above limit, to a homogeneous (with respect to a natural extension of the grading for $r$-spin theory) F-CohFT generalizing to all genera the extended $r$-spin theory. When restricted to the usual phase space, this F-CohFT gives, as prescribed by Gu\'er\'e's formula, the product of the top Chern class of the Hodge bundle by the $r$-spin class, which is of course a partial cohomological field theory. However such factorization does not happen on the extra component of the extended phase space.\\

We then study the problem of explicit description of the intersection theory with our F-CohFT in the case $r=2$. Note that the CohFT, depending on a parameter, discussed above, is semisimple, which means that, by the results of \cite{BPS12}, the corresponding Dubrovin-Zhang hierarchy \cite{DZ05} exists and that, in the limit, it will become a homogeneous system of evolutionary PDEs. The corresponding Hamiltonian structure, however, degenerates in the limit. Using homogeneity, we manage to completely identify such system of PDEs as an extension of the discrete (or $q$-difference) KdV hierarchy of \cite{Fre96} thereby proving a Witten-Kontsevich type result for the F-CohFT: the partition function of the extended $2$-spin theory satisfies the extended discrete KdV hierarchy. This effectively computes all intersection numbers of the F-CohFT with psi-classes.\\

We remark here that the discrete KdV hierarchy also appeared in \cite{BCR12,BCRR14}, together with its bigraded generalizations, as the natural candidate for the Dubrovin-Zhang hierarchy of the equivariant Gromov-Witten theory of local $\mbP^1$-orbifolds. It would be natural to investigate the relation between our construction of the extended $r$-spin classes and such Gromov-Witten theory.\\

Finally we prove that the double ramification hierarchy construction and results of \cite{Bur15a,BR16} can be generalized to F-CohFTs and that, for $r=2$ the DR hierarchy also corresponds to the extended discrete KdV hierarchy, the two incarnations being related by a Miura transformation, providing yet another example of DR/DZ equivalence along the lines of what was conjectured and investigated in \cite{Bur15a,BDGR16a,BDGR16b,BGR17}, but in the more general context of F-CohFTs.\\

We conclude with some remarks and a conjecture about the possible relation of our extended $r$-spin theory with open Hodge integrals, i.e. intersection numbers of psi-classes with Hodge classes on the moduli space of open Riemann surfaces, generalizing the genus $0$ results of~\cite{BCT17}.

\subsection*{Acknowledgements}

We would like to thank Andrea Brini, Guido Carlet, Oleg Chalykh, Allan Fordy, Paolo Lorenzoni, Alexander Mikhailov, Jake Solomon, Ran Tessler and Dimitri Zvonkine for useful discussions.\\

A. B. was supported by the grants RFBR-20-01-00579 and RFBR-16-01-00409.\\[1cm]


\section{CohFTs and F-CohFTs}\label{section:F-CohFT}
Consider the Deligne-Mumford moduli space $\oM_{g,n}$ of genus $g$ stable curves with $n$ marked points, defined for $g,n\geq 0$ and $2g-2+n>0$. In this section we describe a generalization of the notions of cohomological field theory (or CohFT) \cite{KM94} and of partial cohomological field theory \cite{LRZ15} (whose definition, we recall, is the same as for a  CohFT, but without the gluing axiom at non-separating nodes).

\subsection{F-CohFTs and flat F-manifolds}
\begin{definition}\label{definition:F-CohFT}
For $2g-1+n>0$, an F-cohomological field theory (or F-CohFT) is a system of linear maps $c_{g,n+1}\colon V^*\otimes V^{\otimes n} \to H^\even(\oM_{g,n+1},\mbC)$, where $V$ is an arbitrary vector space, together with a special element $e_1\in V$, called the unit, such that, chosen any basis $e_1,\ldots,e_{\dim V}$ of V and the dual basis $e^1,\ldots,e^{\dim V}$ of $V^*$, the following axioms are satisfied:
\begin{itemize}
\item[(i)] the maps $c_{g,n+1}$ are equivariant with respect to the $S_n$-action permuting the $n$ copies of~$V$ in $V^*\otimes V^{\otimes n}$ and the last $n$ marked points in $\oM_{g,n+1}$, respectively.
\item[(ii)] $\pi^* c_{g,n+1}(e^{\alpha_0}\otimes \otimes_{i=1}^n e_{\alpha_i}) = c_{g,n+2}(e^{\alpha_0}\otimes \otimes_{i=1}^n  e_{\alpha_i}\otimes e_1)$ for $1 \leq\alpha_0,\alpha_1,\ldots,\alpha_n\leq \dim V$, where $\pi\colon\oM_{g,n+2}\to\oM_{g,n+1}$ is the map that forgets the last marked point.\\
Moreover $c_{0,3}(e^{\alpha}\otimes e_\beta \otimes e_1) = \delta^\alpha_\beta$ for $1\leq \alpha,\beta\leq \dim V$.
\item[(iii)] $\gl^* c_{g_1+g_2,n_1+n_2+1}(e^{\alpha_0}\otimes \otimes_{i=1}^n e_{\alpha_i}) = c_{g_1,n_1+2}(e^{\alpha_0}\otimes \otimes_{i\in I} e_{\alpha_i} \otimes e_\mu) c_{g_2,n_2+1}(e^{\mu}\otimes \otimes_{j\in J} e_{\alpha_j})$ for $1 \leq\alpha_0,\alpha_1,\ldots,\alpha_n\leq \dim V$, where $I \sqcup J = \{2,\ldots,n+1\}$, $|I|=n_1$, $|J|=n_2$, and $\gl\colon\oM_{g_1,n_1+2}\times\oM_{g_2,n_2+1}\to \oM_{g_1+g_2,n_1+n_2+1}$ is the corresponding gluing map.
\end{itemize}
\end{definition}
\noindent In the above definition and in what follows, we will use Einstein's convention of sum over repeated Greek indices.

Clearly, an F-cohomological field theory is a generalization of a partial cohomological field theory. Indeed, consider an arbitrary partial CohFT $c_{g,n}\colon V^{\otimes n}\to H^\even(\oM_{g,n},\mbC)$ with a metric~$\eta$ on the phase space~$V$. Choose a basis $e_1,\ldots,e_{\dim V}\in V$ and define linear maps 
$$
c^\bullet_{g,n+1}\colon V^*\otimes V^{\otimes n}\to H^\even(\oM_{g,n+1},\mbC) 
$$ 
by
$$
c^\bullet_{g,n+1}(e^{\alpha_0}\otimes\otimes_{i=1}^n e_{\alpha_i}):=\eta^{\alpha_0\mu}c_{g,n+1}(e_\mu\otimes\otimes_{i=1}^n e_{\alpha_i}).
$$
Obviously, the maps $c^\bullet_{g,n+1}$ form an F-CohFT.

In the same way as the genus $0$ part of a CohFT produces a Frobenius manifold, the rational part of an F-CohFT produces a flat F-manifold (which explains the terminology we chose). We recall here the following facts from \cite{Ma05}, see also \cite{AL18}.

\begin{definition}
A flat F-manifold $(M,\nabla,\circ,e_1)$ is the datum of a (smooth or analytic) manifold $M$, an affine connection $\nabla\colon\mathfrak{X}(M) \otimes \mathfrak{X}(M)\to\mathfrak{X}(M)$, where $\mathfrak{X}(M)$ is the Lie algebra of vector fields on $M$, an algebra structure $(T_pM,\circ,e_1)$ with unit $e_1$ on each tangent space, (smoothly or analytically) depending on the point $p\in M$, such that the one-parameter family of connections $\nabla - \lambda \circ$ is flat and torsionless for any $\lambda\in \mbC$, and $\nabla e_1=0$.
\end{definition}

From flatness and torsionlessness of $\nabla-\lambda\circ$ one can deduce commutativity and associativity of the algebras $(T_pM,\circ,e_1)$ and, if we denote by $c(X,Y):=X\circ Y$, one can deduce that the expression $\nabla_Z c(X,Y)$ is symmetric in $X,Y,Z \in \mathfrak{X}(M)$.

If one choses flat coordinates $v^\alpha$, $\alpha=1,\ldots,\dim M$, for the F-manifold, with $e_1 = \frac{\d}{\d v^1}$, it is easy to see that locally the flat F-manifold structure can be encoded entirely in a (smooth or analytic) vector potential $\mcF^\alpha(v^*)$, $1\leq\alpha\leq \dim M$, satisfying
\begin{align*}
\frac{\d^2\mcF^\alpha}{\d v^1\d v^\beta} &= \delta^\alpha_\beta, && 1\leq \alpha,\beta\leq \dim M,\\
\frac{\d^2 \mcF^\alpha}{\d v^\beta \d v^\mu} \frac{\d^2 \mcF^\mu}{\d v^\gamma \d v^\delta} &= \frac{\d^2 \mcF^\alpha}{\d v^\gamma \d v^\mu} \frac{\d^2 \mcF^\mu}{\d v^\beta \d v^\delta}, && 1\leq \alpha,\beta,\gamma,\delta\leq \dim M.
\end{align*}
In particular the components $c^\alpha_{\beta\gamma}$ of the tensor $c$ (i.e. the structure functions of the algebra $(T_pM,\circ,e_1)$) can be written as $c^\alpha_{\beta\gamma} = \frac{\d^2 F^\alpha}{\d v^\beta \d v^\gamma}$, $1\leq \alpha,\beta,\gamma\leq \dim M$.

Given an F-CohFT $c_{g,n+1}\colon V^*\otimes V^{\otimes n} \to H^\even(\oM_{g,n+1},\mbC)$, a vector potential on $V$ satisfying precisely these equations can be constructed as the following generating function,
$$
\mcF^\alpha(v^*) = \sum_{\substack{n\geq 2\\1\leq\alpha_1,\ldots,\alpha_n\leq \dim V}} \int_{\oM_{0,n+1}} c_{0,n+1}(e^\alpha\otimes \otimes_{i=1}^n e_{\alpha_i}) \prod_{i=1}^n v^{\alpha_i},
$$
thus yielding an associated flat F-manifold structure on $V$.

\subsection{Graded F-CohFTs}

Since $H^*(\oM_{g,n},\mbC)$ is a graded $\mbC$-vector space, it is natural to consider the special case of F-CohFTs for which the vector spaces $V$ and $V^*$ are also graded,  $\deg e_1 = 0$, and the pairing has degree $0$, i.e. $\deg e^\alpha = - \deg e_\alpha$ for a  homogeneous basis $e_1,\ldots,e_{\dim V}$ of $V$. In this case the F-CohFT is called {\it graded} (or {\it homogeneous}, or {\it conformal}) if the maps $c_{g,n+1}:V^*\otimes V^{\otimes n} \to H^\even(\oM_{g,n},\mbC)$ are linear homogeneous of degree $\deg c_{g,n+1}$.

Because of axiom (ii) in Definition \ref{definition:F-CohFT}, $\deg c_{g,n+1}$ does not depend on $n$ and $\deg c_{0,n+1} =0$ for any $n\geq 2$.
Moreover, because of axiom (iii), $\deg c_{g,n+1}$ is a linear function of $g$, which implies that the most general form of grading compatible with the axioms of F-CohFT is
$$\deg c_{g,n+1} = \gamma g , \quad \gamma \in \mbC.$$
Notice that, for CohFTs $c_{g,n}\colon V^{\otimes n} \to H^\even(\oM_{g,n},\mbC)$ with metric $\eta$ on $V$, the analogous notion of grading has to be compatible with the extra gluing axiom at non-separating nodes $\gl^*c_{g,n}(\otimes_{i=1}^n e_{\alpha_i}) = c_{g-1,n+2}(\otimes_{i=1}^n e_{\alpha_i}\otimes e_\mu\otimes e_\nu) \eta^{\mu\nu}$, where $\gl\colon\oM_{g-1,n+2}\to \oM_{g,n}$ and, if the map $\eta: V^{\otimes 2}\to \mbC$ has degree $\deg \eta = -\delta$, this imposes the further condition $\gamma=\delta$, i.e.
$$\deg c_{g,n} = \delta (g-1), \qquad \deg \eta =-\delta,\qquad \delta \in \mbC, \quad 1\leq\alpha\leq\dim V.$$


\section{Construction of the extended $r$-spin theory in all genera}

This is the main geometric part of the paper, where we construct a graded F-CohFT in all genera, generalizing the genus $0$ extended $r$-spin theory from~\cite{BCT17}. In Sections~\ref{subsection:r-stable spin curves} and~\ref{subsection:r-spin theory} we recall the main properties of the moduli space of $r$-stable spin curves and the distinguished cohomology class on it, called Witten's class. Then in Section~\ref{subsection:F-CohFT in genus 0} we recall the construction of the extended $r$-spin theory in genus $0$ from~\cite{BCT17}. Section~\ref{subsection:F-CohFT in all genera} is devoted to the generalization of this construction to all genera.

We fix an integer $r\ge 2$ throughout this section.

\subsection{Moduli space of $r$-stable spin curves: overview}\label{subsection:r-stable spin curves}

Here we collect main facts about $r$-stable spin curves and their moduli space. We can recommend the paper~\cite[Section 2]{CZ09} for a more detailed introduction.

An {\it orbifold curve} $C$ with marked points $p_1,\ldots,p_n$ is a connected compact algebraic curve with at most nodal singularities and with an orbifold structure at each node. We require that the marked points belong to the smooth part of~$C$. Moreover, we require that the local picture at each node is $\{xy=0\}/\mbZ_m$, for some $m\ge 1$, where the action of the group $\mbZ_m$ of $m$-th roots of unity is given by $\zeta_m\cdot(x,y)=(\zeta_m x,\zeta_m^{-1}y)$, $\zeta_m=e^{\frac{2\pi i}{m}}$. Denote by~$|C|$ the underlying coarse (or non-orbifold) curve. 

Consider an $n$-tuple of integers $\valpha=(\alpha_1,\ldots,\alpha_n)\in\mbZ^n$. An {\it $r$-spin structure} of type $\valpha$ on a marked orbifold curve $(C;p_1,\ldots,p_n)$ is a pair
\begin{gather}\label{r-spin curve}
\left(L\to C,\phi\colon L^{\otimes r}\stackrel{\sim}{\to}\omega_C\left(\sum_{i=1}^n(1-\alpha_i)p_i\right)\right),
\end{gather}
where $L\to C$ is an orbifold line bundle (called also a {\it spin bundle}), $\phi$ is an isomorphism and by~$\omega_C$ we denote the canonical line bundle on~$C$. The number $\alpha_i$ is called the {\it index} of~$L$ at~$p_i$. An orbifold curve $(C;p_1,\ldots,p_n)$ is called {\it $r$-stable}, if the underlying marked curve $(|C|;p_1,\ldots,p_n)$ is stable and the isotropy group is $\mbZ_r$ at every node. An $r$-stable curve with an $r$-spin structure is called an {\it $r$-stable spin curve}.

The moduli space of $r$-stable spin curves of genus $g$ with $n$ marked points and a spin structure of type~$\valpha=(\alpha_1,\ldots,\alpha_n)$ will be denoted by $\oM^r_{g,\valpha}$. It is a smooth and proper Deligne-Mumford stack, which is non-empty if and only if the stability condition $2g-2+n>0$ and the divisibility condition
\begin{gather}\label{eq:divisibility condition}
\sum_{i=1}^n\alpha_i=2g-2+n\text{ mod } r
\end{gather}
are satisfied. In this case the moduli space $\oM^r_{g,\valpha}$ has complex dimension $3g-3+n$. 

The moduli space $\oM^r_{g,\valpha}$ is equipped with the following structures:
\begin{itemize}

\item the universal curve $\pi\colon\mcC_{\valpha}\to\oM^r_{g,\valpha}$;

\item the universal line bundle $\mcL_{\valpha}\to\mcC_{\valpha}$;

\item the $K$-theoretical element $R^\bullet\pi_*\mcL_{\valpha}:=R^0\pi_*\mcL_{\valpha}-R^1\pi_*\mcL_{\valpha}\in K^0(\oM^r_{g,\valpha})$, which, if $\oM^r_{g,\valpha}$ is non-empty, has rank $\rk(R^\bullet\pi_*\mcL_{\valpha})=-\vd(r,g,\valpha)$, where
$$
\vd(r,g,\valpha):=\frac{(g-1)(r-2)+\sum_{i=1}^n(\alpha_i-1)}{r};
$$

\item the projection $p\colon\oM^r_{g,\valpha}\to\oM_{g,n}$, obtained by forgetting the $r$-spin structure and the orbifold structure on an $r$-stable spin curve $(C;p_1,\ldots,p_n)$;
 
\item the Hodge vector bundle on $\oM^r_{g,\valpha}$, which is the pull-back of the Hodge bundle on $\oM_{g,n}$ via the projection $p$. Both of them will be denoted by $\mbE$. 
 
\end{itemize}
Given an $r$-spin structure~\eqref{r-spin curve}, we can twist the line bundle $L$ by $\mathcal{O}_C(p_i)$, for some $i$: $L\mapsto L(p_i)$. As a result, we get an $r$-spin structure of type $(\alpha_1,\ldots,\alpha_{i-1},\alpha_i-r,\alpha_{i+1},\ldots,\alpha_n)$. Therefore, if $\vec{\beta}=(\beta_1,\ldots,\beta_n)$ is another $n$-tuple on integers, satisfying $\alpha_i-\beta_i=0\text{ mod }r$ for each $i$, then the transformation, described above, allows to identify the moduli spaces $\oM^r_{g,\valpha}$ and $\oM^r_{g,\vec{\beta}}$ together with the universal curves over them: $\oM^r_{g,\valpha}=\oM^r_{g,\vec{\beta}}$, $\mcC_{\valpha}=\mcC_{\vec{\beta}}$. So we can consider the universal line bundle $\mcL_{\vec{\beta}}$ as a line bundle over $\mcC_{\valpha}$.

We will often omit the index $\valpha$ in the notations $\mcC_{\valpha}$ and $\mcL_{\valpha}$, if it is clear what the indices are. 

\subsection{$r$-spin theory}\label{subsection:r-spin theory}

If $1\le\alpha_1,\ldots,\alpha_n\le r$, and the moduli space $\oM^r_{g,\valpha}$ is non-empty, then it possesses a certain natural cohomology class 
$$
c_\vir(\alpha_1,\ldots,\alpha_n)_{g,n}\in H^{2\cdot\vd(r,g,\valpha)}(\oM^r_{g,\valpha},\mbQ),
$$
which is called {\it Witten's class}. In genus $0$ the construction was first carried out in~\cite{Wit93}. In this case a simple degree calculation shows that $R^0\pi_*\mcL=0$ and, therefore, the sheaf $R^1\pi_*\mcL$ is a vector bundle of rank $\rk(R^1\pi_*\mcL)=\vd(r,0,\valpha)$. Then 
$$
c_\vir(\alpha_1,\ldots,\alpha_n)_{0,n}:=c_{\vd(r,0,\valpha)}(R^1\pi_*\mcL).
$$
Different higher genus constructions were obtained in~\cite{PV01},~\cite{Chi06},~\cite{Moc06},~\cite{FJR13}. By the result of~\cite[Theorem 3]{PPZ15}, all these constructions give the same class after the push-forward to the moduli space $\oM_{g,n}$ via the projection $p\colon\oM^r_{g,\valpha}\to\oM_{g,n}$.

The class $c_\vir(\alpha_1,\ldots,\alpha_n)_{g,n}$ satisfies the so-called {\it Ramond vanishing property}: 
$$
c_\vir(\alpha_1,\ldots,\alpha_n)_{g,n}=0,\quad \text{if $\alpha_i=r$ for some~$i$}.
$$
Let us fix a vector space $V$ of dimension $r-1$ with a basis $e_1,\ldots,e_{r-1}$ and a metric $\eta$, defined in this basis by~$\eta_{\mu\nu}:=\delta_{\mu+\nu,r}$. Define a collection of linear maps
$$
c^r_{g,n}\colon V^{\otimes n}\to H^*(\oM_{g,n},\mbQ)
$$ 
by
$$
c^r_{g,n}(\otimes_{i=1}^n e_{\alpha_i}):=
\begin{cases}
(-1)^{\vd(r,g,\valpha)}r^{1-g}p_*c_\vir(\alpha_1,\ldots,\alpha_n)_{g,n},&\text{if condition~\eqref{eq:divisibility condition} is satisfied},\\
0,&\text{otherwise}.
\end{cases}
$$
These maps together with the metric $\eta$ and the vector $e_1$, considered as unit, form a cohomological field theory, called the {\it $r$-spin cohomological field theory}. Clearly,
$$
\deg c^r_{g,n}(\otimes_{i=1}^n e_{\alpha_i})=2\cdot\vd(r,g,\valpha),
$$
if condition~\eqref{eq:divisibility condition} is satisfied. Note that in the case $r=2$ we have
$$
c^2_{g,n}(e_1^{\otimes n})=1\in H^*(\oM_{g,n},\mbQ).
$$

\begin{remark}
The definition of the $r$-spin cohomological field theory, considered in other papers, can be different from ours by a rescaling coefficient. We use the same convention, as in~\cite{PPZ15} and~\cite{BG16}.
\end{remark} 

One of the most remarkable results about the $r$-spin cohomological field theory is the so-called $r$-spin Witten's conjecture~\cite{Wit93}, proved in~\cite{FSZ10}, which describes the intersection theory with the classes $c^r_{g,n}(\otimes_{i=1}^n e_{\alpha_i})$ in terms of the Gelfand-Dickey hierarchy. 

\subsection{F-CohFT in genus $0$}\label{subsection:F-CohFT in genus 0}

Let $1\le\alpha_1,\ldots,\alpha_n\le r$ and denote $\valpha^\ext:=(0,\alpha_1,\ldots,\alpha_n)$. In~\cite{JKV01} the authors noticed that one can define Witten's class on the moduli space $\oM^r_{0,\valpha^\ext}$ in exactly the same way, as described in the previous section. Indeed, we still have the vanishing $R^0\pi_*\mcL=0$ and, therefore, one can define
$$
c_\vir(0,\alpha_1,\ldots,\alpha_n)_{0,n+1}:=c_{\vd(r,0,\valpha^\ext)}\left(R^1\pi_*\mcL\right)\in H^*(\oM^r_{0,\valpha^\ext},\mbQ).
$$
In~\cite{BCT17} the authors systematically studied the intersection theory with these classes. In~\cite[Lemma 3.3]{BCT17} they observed that under the identification $\oM^r_{0,\valpha^\ext}=\oM^r_{0,\valpha'}$, $\valpha':=(r,\alpha_1,\ldots,\alpha_n)$, we have
$$
c_\vir(0,\alpha_1,\ldots,\alpha_n)_{0,n+1}=c_{\vd(r,0,\valpha')-1}(R^1\pi_*\mcL)\in H^*(\oM^r_{0,\valpha'},\mbQ).
$$

Consider an extension $V^\ext$ of the vector space $V$, obtained by adding a basis element $e_r$: $V^\ext:=\<e_1,\ldots,e_r\>$. Denote by $e^1,\ldots,e^r\in(V^\ext)^*$ the dual basis. We can extend the linear maps $c^r_{g,n}\colon V^{\otimes n}\to H^*(\oM_{g,n},\mbQ)$ to linear maps $(V^{\ext})^{\otimes n}\to H^*(\oM_{g,n},\mbQ)$ by saying 
$$
c^r_{g,n}(\otimes_{i=1}^n e_{\alpha_i}):=0,\text{ if $\alpha_i=r$ for some $i$}.
$$
From the results of Section 3.2 in~\cite{BCT17} it follows that the collection of linear maps 
$$
c^{r,\ext}_{0,n+1}\colon (V^\ext)^*\otimes (V^\ext)^{\otimes n}\to H^*(\oM_{0,n+1},\mbQ),
$$
defined by
\begin{gather}\label{eq:F-CohFT in genus 0}
c^{r,\ext}_{0,n+1}(e^{\alpha_0}\otimes\otimes_{i=1}^n e_{\alpha_i}):=
\begin{cases}
c^r_{0,n+1}(e_{r-\alpha_0}\otimes\otimes_{i=1}^n e_{\alpha_i}),&\text{if $1\le\alpha_0\le r-1$},\\
(-1)^{\vd(r,0,\valpha')-1}r\cdot p_* c_{\vd(r,0,\valpha')-1}(R^1\pi_*\mcL),&\text{if $\alpha_0=r$},
\end{cases}
\end{gather}
where in the second case $\valpha':=(r,\alpha_1,\ldots,\alpha_n)$ and $\mcL$ is the universal line bundle on the universal curve $\pi\colon\mcC\to\oM^r_{0,\valpha'}$, form an F-CohFT. We also present a proof of this fact in the next section, where we extend this F-CohFT to all genera. The classes~\eqref{eq:F-CohFT in genus 0} were called in~\cite{BCT17} the {\it extended $r$-spin classes}.

\subsection{F-CohFT in all genera}\label{subsection:F-CohFT in all genera}

In this section we extend the genus $0$ F-CohFT~\eqref{eq:F-CohFT in genus 0} to all genera.

\subsubsection{Characteristic class $\mfc_t(V)$}

Here we mostly recall the material from~\cite[Section 3.2]{BG16}.

Let~$V$ be a complex vector bundle of rank~$k$ over a quasi-projective variety~$S$. Denote by $a_1,\ldots,a_k$ the Chern roots of $V$. Recall that the Todd class of $V$ is defined by
$$
\Td(V):=\prod_{i=1}^k\frac{a_i}{1-e^{-a_i}}\in H^*(S,\mbQ).
$$
Let $t$ be a formal variable. Define 
$$
\lambda_t(V):=\sum_{i\ge 0}(\Lambda^iV)t^i\in K^0(S)[t].
$$
Clearly,
$$
\Ch(\lambda_t(V))=\prod_{i=1}^k(1+e^{a_i}t)\in H^*(S,\mbQ)[t],
$$
where $\Ch$ denotes the Chern character. A class $\mfc_t(V)\in H^*(S,\mbQ)[t]$ is defined by
$$
\mfc_t(V):=\Ch\left(\lambda_{-t}(V^\vee)\right)\Td(V)=\prod_{i=1}^k\left[(1-e^{-a_i}t)\frac{a_i}{1-e^{-a_i}}\right]\in H^*(S,\mbQ)[t].
$$
One can immediately see that $\mfc_t(V)|_{t=1}=c_k(V)$.

Let $\tau:=1-t$ and consider the expansion of the polynomial $\mfc_t(V)$ around $t=1$:
$$
\mfc_t(V)=\sum_{1\le i\le k,\,j\ge 0}\mfc_{i,j}(V)\tau^i,\quad\mfc_{i,j}(V)\in H^{2j}(S,\mbQ).
$$
\begin{lemma}\label{lemma:mfc-class for vector bundle}
We have $\mfc_{i,j}(V)=0$, if $j<k-i$; and $\mfc_{i,k-i}(V)=c_{k-i}(V)$.
\end{lemma}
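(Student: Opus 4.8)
The plan is to reduce everything to a factor-by-factor computation after passing to the variable $\tau = 1-t$. First I would rewrite a single factor of $\mfc_t(V)$: substituting $t = 1-\tau$ into $(1-e^{-a_i}t)$ gives $(1-e^{-a_i}) + \tau e^{-a_i}$, so
$$
(1-e^{-a_i}t)\frac{a_i}{1-e^{-a_i}} = a_i + \tau\,\frac{a_i e^{-a_i}}{1-e^{-a_i}}.
$$
Setting $f(a_i):=\frac{a_i e^{-a_i}}{1-e^{-a_i}}$, the one observation I need is that $f$ is a power series in $a_i$ with constant term $1$, since both $\frac{a_i}{1-e^{-a_i}}$ and $e^{-a_i}$ have constant term $1$; thus $f(a_i)=1+O(a_i)$.

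Next I would expand the product
$$
\mfc_t(V)=\prod_{i=1}^k\bigl(a_i+\tau f(a_i)\bigr),
$$
noting that in each resulting monomial the exponent of $\tau$ equals the number of factors from which the $\tau f(a_i)$-summand was chosen. Hence the coefficient of $\tau^i$ is $\sum_{|S|=i}\prod_{j\in S}f(a_j)\prod_{j\notin S}a_j$, the sum being over subsets $S\subseteq\{1,\ldots,k\}$ of cardinality $i$.

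The two claims then follow from a grading count. In each summand the $k-i$ Chern roots $a_j$ with $j\notin S$ already contribute cohomological degree $2(k-i)$, while each $f(a_j)$ contributes degree $\ge 0$; therefore the $\tau^i$-coefficient lies entirely in degrees $\ge 2(k-i)$, which is exactly the vanishing $\mfc_{i,j}(V)=0$ for $j<k-i$. For the extremal degree $j=k-i$, each $f(a_j)$ can contribute only its constant term $1$, so
$$
\mfc_{i,k-i}(V)=\sum_{|S|=i}\ \prod_{j\notin S}a_j=e_{k-i}(a_1,\ldots,a_k)=c_{k-i}(V),
$$
where $e_{k-i}$ denotes the elementary symmetric polynomial of degree $k-i$ in the Chern roots, recognized as the $(k-i)$-th Chern class. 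I do not expect a genuine obstacle: the content is concentrated in the algebraic simplification of the individual factor, after which the statement is pure bookkeeping of the $\tau$-grading against the cohomological grading.
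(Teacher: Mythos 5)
Your proof is correct and follows essentially the same route as the paper: both rewrite each factor as $a_i+\tau\frac{a_i}{e^{a_i}-1}$ (your $f(a_i)=\frac{a_ie^{-a_i}}{1-e^{-a_i}}$ is the same series) and then read off the two claims from the $\tau$-grading versus the cohomological grading. You have merely spelled out the bookkeeping that the paper compresses into ``the lemma becomes clear.''
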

\begin{proof}
We compute $\mfc_t(V)=\prod_{i=1}^k\left[(1-e^{-a_i}(1-\tau))\frac{a_i}{1-e^{-a_i}}\right]=\prod_{i=1}^k\left(a_i+\tau\frac{a_i}{e^{a_i}-1}\right)$. It remains to note that $\frac{a_i}{e^{a_i}-1}=1+O(a_i)$ and the lemma becomes clear.
\end{proof}

The characteristic class $\mfc_t(V)$ satisfies the properties
\begin{gather*}
\mfc_t(V_1\oplus V_2)=\mfc_t(V_1)\mfc_t(V_2),\qquad\mfc_t(\mbC^k)=(1-t)^k,
\end{gather*}
where $V_1$ and $V_2$ are two vector bundles, and $\mbC^k$ denotes the trivial vector bundle of rank~$k$. Note also that the class $\mfc_t(V)$ has the form $\mfc_t(V)=1+O(t)$ and, therefore, it is invertible in the ring $H^*(S,\mbQ)[[t]]$. Therefore, the function $\mfc_t$ can be defined for an arbitrary element in the $K$-theory of $S$ as follows:
$$
\mfc_t(B-A):=\frac{\mfc_t(B)}{\mfc_t(A)}\in H^*(S,\mbQ)[[t]],
$$
where $A$ and $B$ are two vector bundles. In~\cite{Gue16} the author observed that the class $\mfc_t(B-A)$ can be expressed in the following way:
\begin{align*}
\mfc_t(B-A)=&(1-t)^{\rk(B-A)}\exp\left(\sum_{l\ge 1}s_l(t)\Ch_l(A-B)\right),\quad\text{where}\\
s_l(t):=&\frac{B_l}{l}+(-1)^l\sum_{k=1}^l(k-1)!\left(\frac{t}{1-t}\right)^k\gamma(l,k),
\end{align*}
and the numbers $\gamma(l,k)$ are defined by the generating series
$$
\sum_{l\ge 0}\gamma(l,k)\frac{z^l}{l!}:=\frac{(e^z-1)^k}{k!}.
$$
We see that $\mfc_t(B-A)$ is a rational function of the variable $t$ with coefficients in~$H^*(S,\mbQ)$ and with a unique pole at $t=1$.

Consider the Laurent series expansion of the function $\mfc_t(B-A)$ around $t=1$:
$$
\mfc_t(B-A)=\sum_{i\ge -M,\,j\ge 0}\mfc_{i,j}(B-A)\tau^i,\quad\mfc_{i,j}(B-A)\in H^{2j}(S,\mbQ),
$$
where $M$ is the order of pole of $\mfc_t(B-A)$ at $t=1$. 

\begin{lemma}\label{lemma:mfc-class, vanishing}
If $j<\rk(B-A)-i$, then $\mfc_{i,j}(B-A)=0$.
\end{lemma}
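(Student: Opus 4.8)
The plan is to exploit Gu\'er\'e's closed expression for $\mfc_t(B-A)$ recalled just above, which isolates the rank as an explicit power of $\tau=1-t$. Writing $N:=\rk(B-A)$, that formula reads
$$
\mfc_t(B-A)=\tau^{N}\exp\left(\sum_{l\ge 1}s_l(t)\,\Ch_l(A-B)\right),
$$
so the full $\tau$-expansion of $\mfc_t(B-A)$ is simply $\tau^{N}$ times the $\tau$-expansion of the exponential factor. The entire content of the lemma is therefore a comparison, inside that exponential factor, between the cohomological degree of each monomial and the order of its pole at $\tau=0$.

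First I would record two elementary degree/pole facts. On the cohomological side, $\Ch_l(A-B)\in H^{2l}(S,\mbQ)$, so a product $\Ch_{l_1}(A-B)\cdots\Ch_{l_m}(A-B)$ lives in $H^{2j}(S,\mbQ)$ with $j=\sum_a l_a$. On the $\tau$ side, I would substitute $\frac{t}{1-t}=\frac{1}{\tau}-1$ into the definition of $s_l(t)$ and observe that the highest power of $\frac{t}{1-t}$ occurring is $k=l$; hence $s_l(t)$, viewed as a Laurent series in $\tau$, has a pole at $\tau=0$ of order at most $l$. The constant term $B_l/l$ and all terms with $k<l$ only improve this bound.

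Combining the two facts is the crux, though it is short. Each monomial $s_{l_1}(t)\cdots s_{l_m}(t)\,\Ch_{l_1}(A-B)\cdots\Ch_{l_m}(A-B)$ appearing in the expansion of the exponential has cohomological degree $2j$ with $j=\sum_a l_a$ and $\tau$-pole order at most $\sum_a l_a=j$. Thus its $\tau$-expansion, in cohomological degree $2j$, begins no earlier than $\tau^{-j}$. Multiplying by the prefactor $\tau^{N}$ shifts every such contribution so that, in cohomological degree $2j$, the lowest surviving power of $\tau$ is $\tau^{N-j}$. Reading off coefficients, $\mfc_{i,j}(B-A)$ can be non-zero only when $i\ge N-j$, i.e. only when $j\ge\rk(B-A)-i$; equivalently $\mfc_{i,j}(B-A)=0$ whenever $j<\rk(B-A)-i$, which is the claim.

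The main obstacle is the pole-order bound on $s_l(t)$: everything reduces to checking that the worst singular behaviour of $s_l$ at $\tau=0$ is exactly governed by $l$, matching the cohomological degree carried by $\Ch_l(A-B)$. Once this single numerical balance (pole order $\le l$ against cohomological grading $+l$) is in place, the argument is purely formal. It also simultaneously recovers Lemma~\ref{lemma:mfc-class for vector bundle}, whose bound $j\ge k-i$ is the special case $B=V$, $A=0$, $N=k$.
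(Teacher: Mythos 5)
Your proof is correct and follows essentially the same route as the paper's: both rely on Gu\'er\'e's formula $\mfc_t(B-A)=(1-t)^{\rk(B-A)}\exp\left(\sum_{l\ge 1}s_l(t)\Ch_l(A-B)\right)$ and the observation that $s_l(t)$ has a pole of order (at most) $l$ at $t=1$, so that the coefficient of $\tau^{-i}$ in the exponential lies in $\bigoplus_{j\ge i}H^{2j}(S,\mbQ)$. Your write-up merely makes the degree-versus-pole bookkeeping more explicit than the paper's one-line version.
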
 
\begin{proof}
The function $s_l(t)$ has a pole at $t=1$ of order $l$. Therefore, the coefficient of $\tau^{-i}$, $i\ge 0$, in the expansion of $\exp\left(\sum_{l\ge 1}s_l(t)\Ch_l(A-B)\right)$ around $t=1$ belongs to the space $\bigoplus_{j\ge i}H^{2j}(S,\mbQ)$, which proves the lemma.
\end{proof}
 
\subsubsection{Auxiliary cohomological field theory}

For an $n$-tuple of integers $\valpha=(\alpha_1,\ldots,\alpha_n)$ define a class $G^{r;t}_{g,n}(\alpha_1,\ldots,\alpha_n)\in H^*(\oM_{g,n},\mbQ)(t)$ by
$$
G^{r;t}_{g,n}(\alpha_1,\ldots,\alpha_n):=p_*\left(\mfc_t(-R^\bullet\pi_*\mcL)\mfc_{t^{-r}}(\mbE^\vee)\right)\in H^*(\oM_{g,n},\mbQ)(t),
$$
if condition~\eqref{eq:divisibility condition} is satisfied, and define this class to be zero, if condition~\eqref{eq:divisibility condition} is violated.

Introduce a metric $\eta^\tau$ on the space $V^\ext$ by
$$
\eta^\tau_{\mu\nu}:=
\begin{cases}
\delta_{\mu+\nu,r},&\text{if $1\le\mu,\nu\le r-1$},\\
\delta_{\mu,\nu}\tau,&\text{otherwise}.
\end{cases}
$$
Since $H^*(\oM_{0,3},\mbQ)=\mbQ$, we will sometimes consider the class $G^{r;t}_{0,3}(\alpha_1,\alpha_2,\alpha_3)$ as a rational function from~$\mbQ(t)$.
 
\begin{proposition}\label{proposition:Grt-class, properties}
The classes $G^{r;t}_{g,n}(\alpha_1,\ldots,\alpha_n)$ satisfy the following properties:
\begin{itemize}
\item[1.] For the forgetful map $f\colon\oM_{g,n+1}\to\oM_{g,n}$, forgetting the last marked point, we have $G^{r;t}_{g,n+1}(\alpha_1,\ldots,\alpha_n,1)=f^*G^{r;t}_{g,n}(\alpha_1,\ldots,\alpha_n)$.

\item[2.] For the gluing map $\gl\colon\oM_{g_1,n_1+1}\times\oM_{g_1,n_2+1}\to\oM_{g,n}$, $g=g_1+g_2$, $n=n_1+n_2$, we have
\begin{gather}\label{eq:G-class, gluing property1}
\gl^*G^{r;t}_{g,n}(\alpha_1,\ldots,\alpha_n)=r(\eta^\tau)^{\mu\nu}G^{r;t}_{g_1,n_1+1}(\alpha_1,\ldots,\alpha_{n_1},\mu)\times G^{r;t}_{g_2,n_2+1}(\alpha_{n_1+1},\ldots,\alpha_n,\nu).
\end{gather}
\item[3.] For the gluing map $\gl\colon\oM_{g-1,n+2}\to\oM_{g,n}$ we have
\begin{gather}\label{eq:G-class, gluing property2}
\gl^*G^{r;t}_{g,n}(\alpha_1,\ldots,\alpha_n)=(1-t^{-r})r(\eta^\tau)^{\mu\nu}G^{r;t}_{g-1,n+2}(\alpha_1,\ldots,\alpha_n,\mu,\nu).
\end{gather}
\item[4.] If $1\le\alpha_1,\alpha_2,\alpha_3\le r$, then we have
$$
G^{r;t}_{0,3}(\alpha_1,\alpha_2,\alpha_3)=
\begin{cases}
\frac{1}{r},&\text{if $\alpha_1+\alpha_2+\alpha_3=r+1$},\\
\frac{1-t}{r},&\text{if $\alpha_1+\alpha_2+\alpha_3=2r+1$},\\
0,&\text{otherwise}.
\end{cases}
$$
\end{itemize}
\end{proposition}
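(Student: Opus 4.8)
The plan is to treat the four properties in two groups: items (1)--(3) are functoriality statements that follow from the behaviour of the two $K$-theoretic building blocks $R^\bullet\pi_*\mcL$ and $\mbE$ under the forgetful and gluing maps, combined with the multiplicativity of $\mfc_t$; item (4) is a direct computation on the point $\oM_{0,3}$. Throughout I would use that $\mfc_t(B_1\oplus B_2)=\mfc_t(B_1)\mfc_t(B_2)$, that this multiplicativity extends to the $K$-theoretic product on which $\mfc_t$ is defined, and that $\mfc_t(\mbC^k)=(1-t)^k$, so that $\mfc_{t^{-r}}(\mbC^k)=(1-t^{-r})^k$. For property (1) I would set up the diagram relating the moduli of $r$-spin curves with an extra index-$1$ point to the original one, lying over $f\colon\oM_{g,n+1}\to\oM_{g,n}$. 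Since inserting a marked point of index $\alpha_{n+1}=1$ twists $\omega_C$ by $(1-\alpha_{n+1})p_{n+1}=0$, the universal spin bundle, hence $R^\bullet\pi_*\mcL$, is pulled back along the corresponding forgetful map of spin moduli, while $\mbE$ is pulled back from $\oM_{g,n}$ by construction; thus $\mfc_t(-R^\bullet\pi_*\mcL)\mfc_{t^{-r}}(\mbE^\vee)$ is a pullback, and the projection formula for $p$ gives $G^{r;t}_{g,n+1}(\alpha_1,\ldots,\alpha_n,1)=f^*G^{r;t}_{g,n}(\alpha_1,\ldots,\alpha_n)$.

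For properties (2) and (3) I would restrict $\mfc_t(-R^\bullet\pi_*\mcL)\mfc_{t^{-r}}(\mbE^\vee)$ to the relevant boundary stratum and feed in the normalization exact sequences. Over a separating node the universal curve splits into the two families, the Hodge bundle splits as $\gl^*\mbE\cong\mbE_{g_1,n_1+1}\oplus\mbE_{g_2,n_2+1}$, and $R^\bullet\pi_*\mcL$ decomposes additively in $K$-theory into the contributions of the two normalized components together with a node term; summing over the index $\mu$ carried by the node, together with the degree $r$ of the gluing morphism of $r$-spin moduli (the $\mbZ_r$-automorphisms at the node), produces the overall factor $r$ and the contraction with $(\eta^\tau)^{\mu\nu}$. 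In the range $1\le\mu,\nu\le r-1$ the matching of $r$-th roots forces $\mu+\nu=r$, reproducing the off-diagonal block $\delta_{\mu+\nu,r}$ of $\eta^\tau$, whereas the genuinely new extended sector $\mu=\nu=r$ is where the node term in $\mfc_t$ must supply the extra diagonal factor $\tau=1-t$. Property (3) runs identically, but now the single genus-$(g-1)$ family carries two extra points and the Hodge bundle acquires one trivial summand, $\gl^*\mbE\cong\mbE_{g-1,n+2}\oplus\mbC$; applying $\mfc_{t^{-r}}$ to the dual then contributes exactly the scalar $\mfc_{t^{-r}}(\mbC)=1-t^{-r}$, accounting for the prefactor in~\eqref{eq:G-class, gluing property2}.

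I expect the extended sector to be the main obstacle. Tracking the node contribution to $R^\bullet\pi_*\mcL$ when the spin index at the node equals $r$ (rather than one of the classical Neveu--Schwarz values $1,\ldots,r-1$) and checking that, after applying $\mfc_t$, it collapses precisely to the diagonal factor $\tau$ of $\eta^\tau$ is the delicate bookkeeping; by contrast the factor $r$ and the factor $(1-t^{-r})$ are routine once the splittings of the universal objects are in place. This is also the only genuinely new feature relative to the standard $r$-spin gluing formulas, so verifying it is correctly absorbed into the metric $\eta^\tau$ is the crux of (2) and (3).

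Finally, for property (4) I would evaluate on $\oM_{0,3}=\mathrm{pt}$. The Hodge bundle vanishes in genus $0$, so $\mfc_{t^{-r}}(\mbE^\vee)=1$ and, using $R^0\pi_*\mcL=0$, one has $G^{r;t}_{0,3}=p_*\mfc_t(R^1\pi_*\mcL)$. A direct count gives $\vd(r,0,\valpha)=\frac{\alpha_1+\alpha_2+\alpha_3-r-1}{r}$, so for $1\le\alpha_i\le r$ the divisibility condition~\eqref{eq:divisibility condition} forces $\alpha_1+\alpha_2+\alpha_3\in\{r+1,2r+1\}$, the class being zero otherwise. When the sum is $r+1$ the bundle $R^1\pi_*\mcL$ has rank $0$, so $\mfc_t=1$ and $G^{r;t}_{0,3}=p_*1$; when the sum is $2r+1$ it is a line bundle whose Chern class vanishes on the point, so $\mfc_t(R^1\pi_*\mcL)=1-t$ and $G^{r;t}_{0,3}=(1-t)\,p_*1$. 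Here $p_*1=\frac1r$ is the degree of $p$ over $\oM_{0,3}$ coming from the generic $\mbZ_r$-gerbe, the same normalization that produces the three-point function of the $r$-spin CohFT, which pins down the overall constants and yields the stated values $\frac1r$ and $\frac{1-t}{r}$.
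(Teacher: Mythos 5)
Your overall architecture coincides with the paper's: part 1 via pullback of the universal spin bundle along the forgetful map of spin moduli plus the projection formula; parts 2--3 via the normalization exact sequence for $\mcL$, the splitting (resp.\ the residue sequence) for the Hodge bundle, and the degree-$\frac1r$ bookkeeping for $p$ and the boundary maps; part 4 by the rank count $\vd(r,0,\valpha)\in\{0,1\}$ on a point. Parts 1, 4, and the Hodge/$(1-t^{-r})$ piece of part 3 are correctly and completely outlined.

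There is, however, a genuine gap at exactly the spot you flag as ``the crux,'' and your stated expectation for how it resolves points in the wrong direction. You write that in the extended sector the node term in $\mfc_t$ ``must supply the extra diagonal factor $\tau=1-t$.'' But the gluing formula~\eqref{eq:G-class, gluing property1} contracts with the \emph{inverse} metric, whose extended entry is $(\eta^\tau)^{rr}=\tau^{-1}$, so the net factor you must produce is $(1-t)^{-1}$, not $(1-t)$. The node term alone gives the opposite: when the local index at the node is $a=0$, the restriction $\sigma_e^*\mcL$ is a line bundle with $(\sigma_e^*\mcL)^{\otimes r}\cong\mathcal{O}$, hence $c_1=0$ and $\mfc_t(\sigma_e^*\mcL)=1-t$, so the normalization sequence yields $j^*\tG^{r;t}_{g,n}(\alpha_{[n]})=(1-t)\,\mu^*\bigl(\tG^{r;t}_{g_1,n_1+1}(\alpha_{I_1},0)\times\tG^{r;t}_{g_2,n_2+1}(\alpha_{I_2},0)\bigr)$. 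The missing ingredient is the comparison of the index-$0$ and index-$r$ theories on the two branches: under the identification $\oM^r_{g,(r,\beta)}=\oM^r_{g,(0,\beta)}$ one has $\tG^{r;t}(r,\beta)=(1-t)\,\tG^{r;t}(0,\beta)$ (the paper's Lemma~\ref{lemma:0 and r}, proved by twisting $\mcL\mapsto\mcL(\Delta_1)$ and applying $\mfc_t$ to the resulting exact sequence). Applying this on \emph{both} branches converts the two index-$0$ insertions into the basis vector $e_r$ at the cost of $(1-t)^{-2}$, and the combination $(1-t)\cdot(1-t)^{-2}=(1-t)^{-1}$ is what matches $(\eta^\tau)^{rr}$; the same conversion is needed in part 3. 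Without this lemma the ``delicate bookkeeping'' you defer cannot be closed, and taking the node contribution at face value would produce $\tau$ where $\tau^{-1}$ is required.
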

\begin{proof}
Denote
$$
\tG^{r;t}_{g,n}(\alpha_1,\ldots,\alpha_n):=\mfc_t(-R^\bullet\pi_*\mcL)\mfc_{t^{-r}}(\mbE^\vee)\in H^*(\oM^r_{g,\valpha},\mbQ)(t).
$$
Let $[n]:=\{1,\ldots,n\}$. For a subset $I\subset[n]$, $I=\{i_1,\ldots,i_{|I|}\}$, $i_1<i_2<\ldots<i_{|I|}$, we denote by~$\alpha_I$ the string $\alpha_{i_1},\ldots,\alpha_{i_{|I|}}$.

Let us prove part 1. We have the forgetful map $\tf\colon\oM^r_{g,(\alpha_{[n]},1)}\to\oM^r_{g,\valpha}$ and, since the universal line bundle on the universal curve over $\oM^r_{g,(\alpha_{[n]},1)}$ is the pull-back of the universal line bundle on the universal curve over $\oM^r_{g,\valpha}$, we get $\tf^*\tG^{r;t}_{g,n}(\alpha_{[n]})=\tG^{r;t}_{g,n+1}(\alpha_{[n]},1)$. After that it remains to note that for the projections $p\colon\oM^r_{g,\valpha}\to\oM_{g,n}$ and $p'\colon\oM^r_{g,(\alpha_{[n]},1)}\to\oM_{g,n+1}$ the relation $p'_*\tf^*=f^*p_*$ holds in cohomology. 

Let us prove part 2. Let $I_1:=\{1,\ldots,n_1\}$ and $I_2:=\{n_1+1,\ldots,n\}$. Note that condition~\eqref{eq:divisibility condition} implies that a term in the sum on the right-hand side of formula~\eqref{eq:G-class, gluing property1} vanishes unless the numbers $1\le\mu,\nu\le r$ satisfy the conditions
\begin{gather}\label{eq:relations for mu and nu}
\mu+\sum_{i=1}^{n_1}\alpha_i=2g_1-2+n_1+1\text{ mod }r, \qquad \mu+\nu=0\text{ mod }r,
\end{gather}
which determine them uniquely. 
\begin{lemma}\label{lemma:0 and r}
Under the identification $\oM^r_{g,(r,\alpha_{[n]})}=\oM^r_{g,(0,\alpha_{[n]})}$ we have $\tG^{r;t}_{g,n+1}(r,\alpha_{[n]})=(1-t)\tG^{r;t}_{g,n+1}(0,\alpha_{[n]})$.
\end{lemma}
\begin{proof}
Consider the universal curve $\pi\colon\mcC\to\oM^r_{g,(r,\alpha_{[n]})}$ and the universal line bundles $\mcL:=\mcL_{(r,\alpha_{[n]})}$ and $\tcL:=\mcL_{(0,\alpha_{[n]})}$ on $\mcC$. Denote by $\sigma_1\colon\oM^r_{g,(r,\alpha_{[n]})}\to\mcC$ the section corresponding to the first marked point and by $\Delta_1\subset\mcC$ the image of $\sigma_1$. Clearly, we have $\tcL=\mcL(\Delta_1)$, which gives the exact sequence of sheaves
$$
0\to R^0\pi_*\mcL\to R^0\pi_*\tcL\to\sigma_1^*\tcL\to R^1\pi_*\mcL\to R^1\pi_*\tcL\to 0.
$$
Applying the characteristic class $\mfc_t$, we get $\mfc_t(-R^\bullet\pi_*\mcL)=\mfc_t(-R^\bullet\pi_*\tcL)\mfc_t(\sigma_1^*\tcL)$. Note that $(\sigma_1^*\tcL)^{\otimes r}=\sigma_1^*(\tcL^{\otimes r})\cong\sigma_1^*(\omega_\pi(\Delta_1))\cong\mathcal{O}_{\oM^r_{g,(r,\alpha_{[n]})}}$, where $\omega_\pi$ is the relative canonical line bundle on~$\mcC$. Therefore, $c_1(\sigma_1^*\tcL)=0$, and, hence, $\mfc_t(\sigma_1^*\tcL)=(1-t)$, which proves the lemma.
\end{proof}

Let us recall the structure of the boundary of the moduli space~$\oM^r_{g,\valpha}$, following closely the exposition from~\cite[Section~2.3]{CZ09}. As we have already said above, a neighbourhood of a node of an $r$-stable curve $C$ is isomorphic to the quotient of the ordinary node $\{xy=0\}$ by the $\mbZ_r$-action given by $\zeta_r\cdot(x,y)=(\zeta_rx,\zeta_r^{-1}y)$. Then a spin bundle $L$ on $C$ in the neighbourhood of the node is locally isomorphic to the quotient $\left.\{(x,y,t)|xy=0\}\right/{\mbZ_r}$ with the $\mbZ_r$-action given by $\zeta_r\cdot(x,y,t)=(\zeta_r x,\zeta_r^{-1} y,\zeta_r^a t)$, for some integer $0\le a\le r-1$. The number $a$ here depends on the chosen order of the branches of~$C$ at the node. If we interchange the branches, then the number~$a$ changes to a number~$0\le b\le r-1$, uniquely determined by the relation $a+b=0\text{ mod }r$. The numbers $a$ and $b$ will be called the {\it local indices} of the spin bundle $L$ at the node.

Suppose now that the node is separating and divides our curve~$C$ into a component of genus~$g_1$ with the marking set $I_1$ and a component of genus $g_2$ with the marking set $I_2$. Note that the numbers $a, b$ are related to the numbers $\mu, \nu$ from~\eqref{eq:relations for mu and nu} by
$$
(a,b)=
\begin{cases}
(\mu,\nu),&\text{if $1\le\mu\le r-1$},\\
(0,0),&\text{if $\mu=r$}.
\end{cases}
$$
Let us now do the following procedure:
\begin{itemize}
\item[1.] Normalize the curve $C$ at the node, $C\mapsto \widetilde C=C_1\sqcup C_2$, and take the pull-back of the spin bundle~$L$ to the normalization.
\item[2.] Forget the orbifold structure at the two new marked points (this is the same as passing to the coarse space, but only locally).
\item[3.] Replace the pull-back of the spin bundle~$L$ by the sheaf of its invariant sections in a neighbourhood of the two new marked points. This sheaf turns out to be locally free at the two new marked points.
\end{itemize}
As a result, we get $r$-spin structures of types $(\alpha_{I_1},a)$ and $(\alpha_{I_2},b)$ on the $r$-stable curves~$C_1$ and~$C_2$, respectively.

Denote by $\mcD_{g_1,I_1}$ the moduli space parameterizing singular $r$-stable spin curves with a separating node dividing a curve into a component of genus~$g_1$ with the marking set~$I_1$ and a component of genus~$g_2$ with the marking set~$I_2$. Then the procedure, described above, defines a map $\mcD_{g_1,I_1}\to \oM^r_{g_1,(\alpha_{I_1},a)}\times\oM^r_{g_2,(\alpha_{I_2},b)}$ which we denote by $\mu_{g_1,I_1}$. We also have a natural map $j_{g_1,I_1}\colon\mcD_{g_1,I_1}\to\oM^r_{g,\valpha}$. These two maps are parts of the following commutative diagram:
\begin{gather*}
\xymatrix{
\tcL\ar[r]&\tcC_\mcD=\mcC_1\sqcup\mcC_2\ar[dr]^{\tpi_\mcD}\ar[rr]^\norm & & \mcC_\mcD\ar@/^0.5pc/[dl]^{\pi_\mcD} & \mcL\ar[l]\\
& \oM^r_{g_1,(\alpha_{I_1},a)}\times\oM^r_{g_2,(\alpha_{I_2},b)}\ar[dr]_{p_1\times p_2} & \mcD_{g_1,I_1}\ar[l]^{\hspace{1.75cm}\mu_{g_1,I_1}}\ar[r]_{j_{g_1,I_1}}\ar[d]^\rho \ar@/^0.5pc/[ur]^{\sigma_e} & \oM^r_{g,\valpha}\ar[d]^p &\\
& & \oM_{g_1,n_1+1}\times\oM_{g_2,n_2+1}\ar[r]^{\hspace{1.35cm}\gl} & \oM_{g,n} &
}
\end{gather*}
Here $\pi_\mcD\colon\mcC_\mcD\to\mcD_{g_1,I_1}$ is the universal curve, pulled back from $\oM^r_{g,\valpha}$, and $\mcL\to\mcC_\mcD$ is the universal line bundle. Over $\mcD_{g_1,I_1}$ we also have another universal curve $\tpi_\mcD\colon\tcC_\mcD\to\mcD_{g_1,I_1}$, given by normalization at the separating node. Let us decompose $\tcC_\mcD=\mcC_1\sqcup\mcC_2$ and denote by $\norm\colon\tcC_\mcD\to\mcC_\mcD$ the universal normalization map. Denote by $\tcL$ the universal line bundle over~$\tcC_\mcD$. Finally, by $\sigma_e$ we denote the section of the universal curve $\mcC_\mcD$ corresponding to the distinguished node of a singular curve from~$\mcD_{g_1,I_1}$.

Note that $\mcC_i$, $i=1,2$, is the pull-back via the map $\mu_{g_1,I_1}$ of the universal curve over the $i$-th component of the product $\oM^r_{g_1,(\alpha_{I_1},a)}\times\oM^r_{g_2,(\alpha_{I_2},b)}$ and that $\mu_{g_1,I_1}^*(\mbE_1\oplus\mbE_2)=j_{g_1,I_1}^*\mbE$, where~$\mbE_1$ and~$\mbE_2$ are the Hodge vector bundles over the moduli spaces $\oM^r_{g_1,(\alpha_{I_1},a)}$ and $\oM^r_{g_2,(\alpha_{I_2},b)}$, respectively. Therefore, we have
$$
\mfc_t(-R^\bullet\tpi_{\mcD *}\tcL)\mfc_{t^{-r}}(j_{g_1,I_1}^*\mbE^\vee)=\mu_{g_1,I_1}^*\left(\tG^{r;t}_{g_1,n_1+1}(\alpha_{I_1},a)\times\tG^{r;t}_{g_2,n_2+1}(\alpha_{I_2},b)\right).
$$
We have the exact sequence of sheaves on $\mcC_\mcD$
$$
0\to\mcL\to\norm_*\tcL\to\left.\mcL\right|_{\Delta_e}\to 0,
$$
where $\Delta_e$ is the image of the section $\sigma_e$. We get the long exact sequence of higher push-forwards
\begin{gather}\label{eq:G-class, gluing property1, long exact}
0\to R^0\pi_{\mcD *}\mcL\to R^0\tpi_{\mcD *}\tcL\to\sigma_e^*\mcL\to R^1\pi_{\mcD *}\mcL\to R^1\tpi_{\mcD *}\tcL\to 0.
\end{gather}

Suppose now that $1\le a\le r-1$. Then $\sigma_e^*\mcL=0$, because sections of an orbifold line bundle necessarily vanish at points where the isotropy group acts non-trivially on the fiber. Applying the characteristic class $\mfc_t$ to~\eqref{eq:G-class, gluing property1, long exact}, we get 
$$
\mu_{g_1,I_1}^*\left(\tG^{r;t}_{g_1,n_1+1}(\alpha_{I_1},a)\times\tG^{r;t}_{g_2,n_2+1}(\alpha_{I_2},b)\right)=j_{g_1,I_1}^*\tG^{r;t}_{g,n}(\alpha_{[n]}).
$$
If $a=0$, then $(\sigma_e^*\mcL)^{\otimes r}=\sigma_e^*\omega_{\pi_\mcD}=\mathcal{O}_{\mcD_{g_1,I_1}}$, which implies that $c_1(\sigma_e^*\mcL)=0$ and, therefore, 
$$
(1-t)\mu_{g_1,I_1}^*\left(\tG^{r;t}_{g_1,n_1+1}(\alpha_{I_1},0)\times\tG^{r;t}_{g_2,n_2+1}(\alpha_{I_2},0)\right)=j_{g_1,I_1}^*\tG^{r;t}_{g,n}(\alpha_{[n]}).
$$

Using Lemma~\ref{lemma:0 and r}, in all cases $0\le a\le r-1$ we can now write
$$
j_{g_1,I_1}^*\tG^{r;t}_{g,n}(\alpha_{[n]})=(\eta^\tau)^{\mu\nu}\mu_{g_1,I_1}^*\left(\tG^{r;t}_{g_1,n_1+1}(\alpha_{I_1},\mu)\times\tG^{r;t}_{g_2,n_2+1}(\alpha_{I_2},\nu)\right).
$$
Since $\deg\mu_{g_1,I_1}=1$~\cite[page 1348]{CZ09}, we get
$$
\rho_*j_{g_1,I_1}^*\tG^{r;t}_{g,n}(\alpha_{[n]})=(\eta^\tau)^{\mu\nu}G^{r;t}_{g_1,n_1+1}(\alpha_{I_1},\mu)\times G^{r;t}_{g_2,n_2+1}(\alpha_{I_2},\nu),
$$
which implies formula~\eqref{eq:G-class, gluing property1}, because $\rho_*j^*_{g_1,I_1}=\frac{1}{r}\gl^*p_*$.

The proof of part 3 of the proposition is analagous to the proof of part 2. For numbers $0\le a,b\le r-1$, satisfying $a+b=r\text{ mod }r$, we consider the moduli space $\mcD^a_{\irr}$ parameterizing singular $r$-stable spin curves with a chosen nonseparating node and an order of branches of the curve at it, such that the local indices of the spin bundle at the node are equal to $a$ and $b$. We have natural maps 
\begin{gather*}
\xymatrix{
\oM^r_{g-1,(\alpha_{[n]},a,b)} & \mcD^a_{\irr}\ar[l]_{\hspace{1cm}\mu^a_{\irr}}\ar[r]^{\hspace{-0.15cm}j^a_{\irr}} & \oM^r_{g,\valpha}.
}
\end{gather*}
Then we proceed with the same calculation with two universal curves over $\mcD^a_\irr$, as in the proof of part 2, noting that the bundles $(\mu^a_\irr)^*\mbE$ and $(j^a_\irr)^*\mbE$ are not isomorphic, but we have the short exact sequence
$$
0\to(\mu^a_\irr)^*\mbE\to(j^a_\irr)^*\mbE\stackrel{\res}{\to}\mcO_{\mcD^a_\irr}\to 0,
$$ 
where the map $\res$ is given by taking the residue of a holomorphic $1$-form on a singular curve at the node. As a result, we get
\begin{gather*}
(j^a_\irr)^*\tG^{r;t}_{g,n}(\alpha_{[n]})=
\begin{cases}
(1-t^{-r})(\mu^a_\irr)^*\tG^{r;t}_{g-1,n+2}(\alpha_{[n]},a,b),&\text{if $1\le a\le r-1$},\\
(1-t^{-r})(1-t)(\mu^a_\irr)^*\tG^{r;t}_{g-1,n+2}(\alpha_{[n]},a,b),&\text{if $a=0$}.
\end{cases}
\end{gather*}

Define $\mcD_\irr:=\bigsqcup_{a=0}^{r-1}\mcD_\irr^a$, $\mu_\irr:=\bigsqcup_{a=0}^{r-1}\mu_\irr^a$, $j_\irr:=\bigsqcup_{a=0}^{r-1}j_\irr^a$ and let $\rho\colon\mcD_\irr\to\oM_{g-1,n+2}$ be the composition of the map $\mu_\irr$ and the projection $\bigsqcup_{\substack{0\le a,b\le r-1\\a+b=0\text{ mod }r}}\oM^r_{g-1,(\alpha_{[n]},a,b)}\to\oM_{g-1,n+2}$. The proof of formula~\eqref{eq:G-class, gluing property2} is completed by using Lemma~\ref{lemma:0 and r}, the property $\deg\mu^a_{\irr}=1$ \cite[page 1348]{CZ09} and the fact that $\rho_*j_\irr^*=\frac{1}{r}\gl^*p_*$.

Let us prove part 4. We may assume that $\alpha_1+\alpha_2+\alpha_3=1\text{ mod }r$. Using Lemma~\ref{lemma:mfc-class for vector bundle} and the vanishing $R^0\pi_*\mcL=0$, we get $\int_{\oM_{0,3}}G^{r;t}_{0,3}(\alpha_1,\alpha_2,\alpha_3)=\int_{\oM_{0,3}}p_*\mfc_t(R^1\pi_*\mcL)=\frac{(1-t)^{\vd(r,0,(\alpha_1,\alpha_2,\alpha_3))}}{r}$, which proves part 4 of the proposition.
\end{proof}

We use the standard notation $\lambda_j:=c_j(\mbE)\in H^{2j}(\oM_{g,n},\mbQ)$.
\begin{lemma}\label{lemma:Grt-class, no pole}
If $1\le\alpha_1,\ldots,\alpha_n\le r$, then the function $G^{r;t}_{g,n}(\alpha_1,\ldots,\alpha_n)\in H^*(\oM_{g,n},\mbQ)(t)$ doesn't have pole at $t=1$ and, moreover,
\begin{gather}\label{eq:Grt-class, specialization}
\left.G^{r;t}_{g,n}(\alpha_1,\ldots,\alpha_n)\right|_{t=1}=(-1)^{g+\vd(r,g,\valpha)}r^{g-1}\lambda_g c^r_{g,n}(\otimes_{i=1}^n e_{\alpha_i}).
\end{gather}
\end{lemma}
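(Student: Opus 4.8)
The plan is to recognize this as the all-genus version of Gu\'er\'e's formula \cite{Gue17} and to reduce it to a statement on the spin space $\oM^r_{g,\valpha}$ itself. Write
$$
\tG^{r;t}_{g,n}(\valpha)=\mfc_t(-R^\bullet\pi_*\mcL)\,\mfc_{t^{-r}}(\mbE^\vee)\in H^*(\oM^r_{g,\valpha},\mbQ)(t),
$$
so that $G^{r;t}_{g,n}=p_*\tG^{r;t}_{g,n}$ (if the divisibility condition \eqref{eq:divisibility condition} fails both sides of \eqref{eq:Grt-class, specialization} vanish, so I assume it holds). Since $p$ is finite, $p_*$ preserves cohomological degree and cannot create poles in $t$, so it suffices to prove that $\tG^{r;t}_{g,n}(\valpha)$ is regular at $t=1$ with value $c_g(\mbE^\vee)\,c_\vir(\valpha)=(-1)^g\lambda_g\,c_\vir(\valpha)$. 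Granting this, \eqref{eq:Grt-class, specialization} follows by pushing forward: using $p^*\lambda_g=\lambda_g$ and the projection formula, together with $c^r_{g,n}(\otimes_ie_{\alpha_i})=(-1)^\vd r^{1-g}p_*c_\vir$ (so $p_*c_\vir=(-1)^\vd r^{g-1}c^r_{g,n}$), one obtains $p_*\!\left((-1)^g\lambda_g c_\vir\right)=(-1)^{g+\vd}r^{g-1}\lambda_g c^r_{g,n}$, the signs and powers of $r$ cancelling exactly as claimed.

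For the core statement I would expand both factors through the formula of \cite{Gue16} recalled above:
$$
\mfc_t(-R^\bullet\pi_*\mcL)=(1-t)^\vd\exp\Big(\sum_{l\ge1}s_l(t)\,\ch_l(R^\bullet\pi_*\mcL)\Big),
$$
$$
\mfc_{t^{-r}}(\mbE^\vee)=(1-t^{-r})^g\exp\Big(-\sum_{l\ge1}s_l(t^{-r})\,\ch_l(\mbE^\vee)\Big).
$$
Setting $\tau=1-t$, the two prefactors combine near $t=1$ to $(1-t)^\vd(1-t^{-r})^g=(-1)^gr^g\,\tau^{\vd+g}(1+O(\tau))$, so that $\tG^{r;t}_{g,n}(\valpha)=(-1)^gr^g\,\tau^{\vd+g}\,\exp(E)\,(1+O(\tau))$, where $E=\sum_l s_l(t)\ch_l(R^\bullet\pi_*\mcL)-\sum_l s_l(t^{-r})\ch_l(\mbE^\vee)$. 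As in the proof of Lemma~\ref{lemma:mfc-class, vanishing}, both $s_l(t)$ and $s_l(t^{-r})$ have a pole of order $l$ at $t=1$, hence the coefficient of $\tau^{-i}$ in $\exp(E)$ lies in $\bigoplus_{j\ge i}H^{2j}$. Consequently a pole of $\tG^{r;t}_{g,n}$ of order $i>0$ would be carried by a class of cohomological degree at least $2(\vd+g+i)$, to be tested against $\dim_{\mbC}\oM^r_{g,\valpha}=3g-3+n$; and, if regular, the value at $t=1$ is $(-1)^gr^g$ times the degree-$2(\vd+g)$ part of the order-$(\vd+g)$ pole of $\exp(E)$.

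The hard part will be the regularity itself, i.e. the vanishing of all poles: the dimension bound above only forces $i\le 2g-3+n-\vd$ and does not by itself kill the poles, so they must cancel for a structural reason. This is exactly Gu\'er\'e's input, and I would establish it by substituting Chiodo's formula for $\ch_l(R^\bullet\pi_*\mcL)$ into $E$, which rewrites that Chern character in terms of $\ch_l(\mbE)$, $\psi$- and $\kappa$-classes and boundary pushforwards. The $\ch_l(\mbE)$-contributions are then arranged to cancel against the $\ch_l(\mbE^\vee)$ term (after Mumford's relation $\ch_{2k}(\mbE)=0$, $k\ge1$), while the surviving tautological part reorganizes, after multiplication by $(-1)^gr^g\tau^{\vd+g}$, into $c_g(\mbE^\vee)c_\vir$. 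A convenient consistency check on the target is part 3 of Proposition~\ref{proposition:Grt-class, properties}: its factor $(1-t^{-r})$ vanishes at $t=1$, matching the vanishing of $\gl^*\lambda_g$ under the non-separating gluing map, which follows from the residue sequence $0\to(\mu^a_\irr)^*\mbE\to(j^a_\irr)^*\mbE\to\mcO\to0$ used in that proof (the left term has rank $g-1$, so $c_g=0$). Finally, the genus-$0$ case is immediate and serves as base case: there $R^0\pi_*\mcL=0$ and $\mbE=0$, so $\tG^{r;t}_{0,n}|_{t=1}=c_{\vd}(R^1\pi_*\mcL)=c_\vir$, with no pole to worry about.
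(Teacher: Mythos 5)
Your reduction to the class $\tG^{r;t}_{g,n}(\valpha)=\mfc_t(-R^\bullet\pi_*\mcL)\mfc_{t^{-r}}(\mbE^\vee)$ on $\oM^r_{g,\valpha}$, and the bookkeeping of the signs and powers of $r$ in passing through $p_*$, are correct and consistent with the paper. But the core of your argument has a genuine gap, in two respects. First, for $1\le\alpha_1,\ldots,\alpha_n\le r-1$ the regularity at $t=1$ and the identification of the value with $(-1)^g\lambda_g\,c_\vir$ is precisely \cite[Theorem 3.5]{BG16} (Gu\'er\'e's theorem), and the paper simply cites it; your plan to re-derive it by ``substituting Chiodo's formula and arranging the cancellations'' is only a declaration of intent --- you yourself note that the dimension bound does not kill the poles, and you never exhibit the structural cancellation. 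If you intend to cite Gu\'er\'e instead, that step is fine, but then it must be said as such.

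Second, and more importantly, the lemma allows $\alpha_i=r$, and this Ramond case is \emph{not} covered by \cite[Theorem 3.5]{BG16}; it is the genuinely new content of the lemma, and your proposal gives no argument for it. There the claimed limit is $0$ (since $c^r_{g,n}$ vanishes when some $\alpha_i=r$), and nothing in your sketch shows that $\tG^{r;t}$ is regular at $t=1$, let alone that its value vanishes, in that sector. The paper handles this by induction on the number $m$ of indices equal to $r$, using the pullback and gluing properties of Proposition~\ref{proposition:Grt-class, properties}: for $r\ge 3$ one writes $G^{r;t}_{g,n+m}(\alpha_{[n]},r,\ldots,r)$ as the pullback, under the map attaching a rational tail carrying indices $(2,r-1)$, of the class with only $m-1$ indices equal to $r$, so regularity follows from the induction hypothesis and the specialization from the vanishing of $\gl^*c^r_{g,n+2}(\cdots\otimes e_2\otimes e_{r-1})$; for $r=2$ a separate argument with the non-separating gluing map $\gl\colon\oM_{g,m}\to\oM_{g+1,m-2}$ and the vanishing $\gl^*\lambda_g=0$ is required. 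Without some such mechanism your proof does not establish the lemma in the cases that are actually needed later (e.g.\ for the construction of $c^{r,\ext}_{g,n+1}$ on the extended phase space).
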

\begin{proof}
If $1\le\alpha_1,\ldots,\alpha_n\le r-1$, then the lemma follows from~\cite[Theorem 3.5]{BG16}. Let us prove that the lemma is true for the classes 
\begin{gather}\label{eq:Grt-class with m rs}
G^{r;t}_{g,n+m}(\alpha_1,\ldots,\alpha_n,\underbrace{r,\ldots,r}_{\text{$m$ times}}),\quad 1\le\alpha_1,\ldots,\alpha_n\le r-1,\quad m\ge 1,
\end{gather}
by induction on $m$. The proof is different in the cases $r\ge 3$ and $r=2$.

Suppose that $r\ge 3$. Consider a map $f\colon\oM_{g,n+m}\to\oM_{g,n+m+2}$, defined as the composition of the map $\oM_{g,n+m}\to\oM_{g,n+m}\times\oM_{0,3}$ and the gluing map $\gl\colon\oM_{g,n+m}\times\oM_{0,3}\to\oM_{g,n+m+2}$, which identifies that last marked points on curves from $\oM_{g,n+m}$ and $\oM_{0,3}$. Then, by Proposition~\ref{proposition:Grt-class, properties}, 
$$
f^*G^{r;t}_{g,n+m+2}(\alpha_1,\ldots,\alpha_n,\underbrace{r,\ldots,r}_{\text{$m-1$ times}},2,r-1)=G^{r;t}_{g,n+m}(\alpha_1,\ldots,\alpha_n,\underbrace{r,\ldots,r}_{\text{$m$ times}}),
$$
which, by the induction assumption, implies that the class~\eqref{eq:Grt-class with m rs} doesn't have pole at $t=1$. If $m\ge 2$, then formula~\eqref{eq:Grt-class, specialization} also follows from the induction assumption. In the case $m=1$ formula~\eqref{eq:Grt-class, specialization} follows from the fact that $\gl^* c^r_{g,n+2}(\otimes_{i=1}^ne_{\alpha_i}\otimes e_2\otimes e_{r-1})=0$.

Suppose that $r=2$. Since the class~\eqref{eq:Grt-class with m rs} is equal to the pull-back of the class $G^{2;t}_{g,m}(2,\ldots,2)$ via the forgetful map $\oM_{g,n+m}\to\oM_{g,m}$, we only have to prove the lemma for the classes $G^{2;t}_{g,m}(2,\ldots,2)$, $m\ge 1$, where we may assume that $m$ is even. Consider the gluing map $\gl\colon\oM_{g,m}\to\oM_{g+1,m-2}$. Then, by Proposition~\ref{proposition:Grt-class, properties}, we have
$$
\gl^* G^{2;t}_{g+1,m-2}(\underbrace{2,\ldots,2}_{\text{$m-2$ times}})=2(1-t^{-2})G^{2;t}_{g,m}(\underbrace{2,\ldots,2}_{\text{$m-2$ times}},1,1)-2\frac{t+1}{t^2}G^{2;t}_{g,m}(\underbrace{2,\ldots,2}_{\text{$m$ times}}),
$$
which, by the induction assumption, implies that the function $G^{2;t}_{g,m}(2,\ldots,2)$ doesn't have pole at $t=1$. In order to prove formula~\eqref{eq:Grt-class, specialization}, we should also use that $\gl^*\lambda_g=0$. This completes the proof of the lemma.
\end{proof}

This lemma implies that we can consider the class $G^{r;1-\tau}_{g,n}(\alpha_1,\ldots,\alpha_n)$ as an element of the space $H^*(\oM_{g,n},\mbQ)[[\tau]]$. Let $\sigma$ be a formal variable. Introduce maps
$$
C^{r;\sigma,\tau}_{g,n}\colon (V^\ext)^{\otimes n}\to H^*(\oM_{g,n},\mbQ)[[\sigma,\sigma^{-1},\tau]]
$$ 
by
$$
C^{r;\sigma,\tau}_{g,n}(\otimes_{i=1}^n e_{\alpha_i}):=\sigma^{-(g+\vd(r,g,\valpha))}r^{1-g}(-\sigma)^{\frac{1}{2}\Deg}G^{r;1-\sigma\tau}_{g,n}(\alpha_1,\ldots,\alpha_n),
$$
where $\Deg\colon H^*(\oM_{g,n},\mbQ)\to H^*(\oM_{g,n},\mbQ)$ is the linear operator, which acts on a subspace $H^m(\oM_{g,n},\mbQ)$ by the multiplication by~$m$. 

\begin{proposition}\label{proposition:C-class -- partial CohFT}
The classes $C^{r;\sigma,\tau}_{g,n}(\otimes_{i=1}^n e_{\alpha_i})$ satisfy the following properties:
\begin{itemize}
\item[1.] The class $C^{r;\sigma,\tau}_{g,n}(\otimes_{i=1}^n e_{\alpha_i})$ belongs to the space $ H^*(\oM_{g,n},\mbQ)[[\sigma,\tau]]$ and its expansion in the variables $\sigma$ and $\tau$ has the form
$$
C^{r;\sigma,\tau}_{g,n}(\otimes_{i=1}^n e_{\alpha_i})=\lambda_g c^r_{g,n}(\otimes_{i=1}^n e_{\alpha_i})+\sum_{\substack{k\ge 0,\,l\ge 1\\g+\vd(r,g,\valpha)+k-l\ge 0}}\sigma^k\tau^l C^r_{g,n,k,l}(\otimes_{i=1}^n e_{\alpha_i}),
$$
where $\deg C^r_{g,n,k,l}(\otimes_{i=1}^n e_{\alpha_i})=2\left(g+\vd(r,g,\valpha)+k-l\right)$.
\item[2.] The maps $C^{r;\sigma,\tau}_{g,n}$ form a partial CohFT with the phase space $V^\ext$ and the metric $\eta^\tau$, and with the following factorization property along the gluing map $\gl\colon\oM_{g-1,n+2}\to\oM_{g,n}$:
\begin{gather}\label{eq:loop gluing property}
\gl^*C^{r;\sigma,\tau}_{g,n}(\otimes_{i=1}^n e_{\alpha_i})=\frac{1-(1-\sigma\tau)^{-r}}{\sigma}(\eta^\tau)^{\mu\nu}C^{r;\sigma,\tau}_{g-1,n+2}(\otimes_{i=1}^n e_{\alpha_i}\otimes e_\mu\otimes e_\nu).
\end{gather}
\end{itemize}
\end{proposition}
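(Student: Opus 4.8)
The plan is to deduce everything from the four properties of the classes $G^{r;t}_{g,n}$ established in Proposition~\ref{proposition:Grt-class, properties} and from the specialization statement of Lemma~\ref{lemma:Grt-class, no pole}, by tracking carefully how the normalizing prefactors $\sigma^{-(g+\vd)}r^{1-g}(-\sigma)^{\frac12\Deg}$ transform under pullbacks, forgetting maps, and gluing. First I would address part~1. Since Lemma~\ref{lemma:Grt-class, no pole} guarantees that $G^{r;1-\tau}_{g,n}(\valpha)$ lies in $H^*(\oM_{g,n},\mbQ)[[\tau]]$ when all $\alpha_i\in\{1,\dots,r\}$, substituting $\tau\mapsto\sigma\tau$ shows that $G^{r;1-\sigma\tau}_{g,n}(\valpha)$ is a power series in $\sigma\tau$; the key observation is that the operator $(-\sigma)^{\frac12\Deg}$ multiplies each cohomology class in $H^{2j}$ by $(-\sigma)^j$, so the $\tau^l$-coefficient, which by Lemma~\ref{lemma:mfc-class, vanishing} has cohomological degree bounded below in terms of $\vd+l$, contributes a power of $\sigma$ that exactly cancels the negative power $\sigma^{-(g+\vd)}$ down to a nonnegative exponent. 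A bookkeeping computation with $\tau=1-t$ and $\Deg$ then yields both the claimed power-series form (no negative powers of $\sigma$) and the degree count $\deg C^r_{g,n,k,l}=2(g+\vd+k-l)$. The leading term at $\sigma=\tau=0$ is read off directly from the specialization $\eqref{eq:Grt-class, specialization}$: the prefactor $(-\sigma)^{\frac12\Deg}$ acting on the degree-$2(g+\vd)$ class $\lambda_g c^r_{g,n}$ produces $(-\sigma)^{g+\vd}$, which cancels $\sigma^{-(g+\vd)}$, and the sign $(-1)^{g+\vd}$ in $\eqref{eq:Grt-class, specialization}$ together with $r^{g-1}\cdot r^{1-g}=1$ leaves exactly $\lambda_g c^r_{g,n}(\otimes_{i=1}^n e_{\alpha_i})$.

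For part~2 I would verify the partial-CohFT axioms one at a time. Symmetry (the $S_n$-equivariance) is immediate since the classes $G^{r;t}_{g,n}$ are manifestly symmetric in their arguments. The forgetful/unit axiom follows from part~1 of Proposition~\ref{proposition:Grt-class, properties}: pulling back along $f\colon\oM_{g,n+1}\to\oM_{g,n}$ inserts an argument $\alpha=1$ (i.e.\ $e_1$) and changes neither $\vd$ nor the genus, so the prefactors match and $f^*C^{r;\sigma,\tau}_{g,n}=C^{r;\sigma,\tau}_{g,n+1}(\,\cdot\,\otimes e_1)$; the normalization $c_{0,3}(e^\alpha\otimes e_\beta\otimes e_1)=\delta^\alpha_\beta$ is checked against part~4, after raising an index with $\eta^\tau$. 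The separating-node gluing axiom comes from part~2 of Proposition~\ref{proposition:Grt-class, properties}: here one must check that the factor of $r$ on the right of $\eqref{eq:G-class, gluing property1}$ is absorbed by the product of prefactors. The decisive identity is additivity of $\vd$ and of $g$ across the node, namely $\vd(r,g,\valpha)=\vd(r,g_1,(\alpha_{I_1},\mu))+\vd(r,g_2,(\alpha_{I_2},\nu))+1$ together with $g=g_1+g_2$, so that the three separate normalizations $\sigma^{-(g_i+\vd_i)}r^{1-g_i}$ recombine into $\sigma^{-(g+\vd)}r^{1-g}$ times precisely one extra factor of $r$ that cancels the $r$ in $\eqref{eq:G-class, gluing property1}$; the operator $(-\sigma)^{\frac12\Deg}$ is multiplicative under the Künneth product $\gl^*$, so it distributes across the two factors without obstruction.

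The remaining and most delicate point is the non-separating gluing formula $\eqref{eq:loop gluing property}$, which I expect to be the main obstacle. It must follow from part~3 of Proposition~\ref{proposition:Grt-class, properties}, but here the extra factor $(1-t^{-r})$ in $\eqref{eq:G-class, gluing property2}$ does \emph{not} cancel against the prefactors; instead it survives into the coefficient $\frac{1-(1-\sigma\tau)^{-r}}{\sigma}$ on the right of $\eqref{eq:loop gluing property}$ after the substitution $t=1-\sigma\tau$. The careful step is the genus/dimension bookkeeping at a non-separating node: one has $g=(g-1)+1$ and $\vd(r,g,\valpha)=\vd(r,g-1,(\valpha,\mu,\nu))+1$, so the change in the normalizing exponents exactly accounts for one factor of $r$ (cancelling the $r$ in $\eqref{eq:G-class, gluing property2}$) and leaves behind a single factor of $\sigma^{-1}$, which combines with the surviving $1-t^{-r}=1-(1-\sigma\tau)^{-r}$ to give $\frac{1-(1-\sigma\tau)^{-r}}{\sigma}$. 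I would verify that this coefficient is genuinely a power series in $\sigma$ and $\tau$ (so that both sides live in the same completed ring), which holds because $1-(1-\sigma\tau)^{-r}=-r\sigma\tau+O((\sigma\tau)^2)$ is divisible by $\sigma$. Finally, since a partial CohFT requires the gluing axioms only at separating nodes, establishing $\eqref{eq:loop gluing property}$ as a \emph{separate} identity (rather than as one of the defining axioms) is what distinguishes this structure; the proof that it is merely a \emph{partial} CohFT, and not a full CohFT, is exactly the observation that the factor $\frac{1-(1-\sigma\tau)^{-r}}{\sigma}$ is not identically $1$, so the usual non-separating gluing axiom fails.
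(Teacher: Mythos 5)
Your overall strategy coincides with the paper's (whose proof is three sentences citing exactly the ingredients you use: Lemma~\ref{lemma:mfc-class, vanishing} plus formula~\eqref{eq:Grt-class, specialization} for part~1, and Proposition~\ref{proposition:Grt-class, properties} for part~2), and your treatment of part~1 is correct. However, the prefactor bookkeeping you give for part~2 contains genuine errors that would make the verification fail if carried out as written. The dimension identities you invoke are wrong: from $\vd(r,g,\valpha)=\frac{(g-1)(r-2)+\sum(\alpha_i-1)}{r}$ one computes, for the separating gluing, $\vd(r,g_1,(\alpha_{I_1},\mu))+\vd(r,g_2,(\alpha_{I_2},\nu))-\vd(r,g,\valpha)=\frac{\mu+\nu-r}{r}$, which equals $0$ when $\mu+\nu=r$ and $1$ when $\mu=\nu=r$ --- never $-1$, as your formula $\vd=\vd_1+\vd_2+1$ asserts. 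Likewise, for the non-separating gluing, $\vd(r,g-1,(\valpha,\mu,\nu))-\vd(r,g,\valpha)\in\{0,1\}$, not $-1$. With your identities the powers of $\sigma$ on the two sides of the gluing formulas do not match (one is left with a stray power of $\sigma$). Moreover, the factor of $r$ in~\eqref{eq:G-class, gluing property1} is not cancelled by ``the change in the normalizing exponents'' of $\sigma$; it is absorbed by the separate mechanism $r^{1-g_1}\cdot r^{1-g_2}=r\cdot r^{1-g}$.

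The more substantive omission is the metric. In Proposition~\ref{proposition:Grt-class, properties} the symbol $\tau$ denotes $1-t$, so after the substitution $t=1-\sigma\tau$ the metric appearing in~\eqref{eq:G-class, gluing property1} and~\eqref{eq:G-class, gluing property2} is $\eta^{\sigma\tau}$, with $(\eta^{\sigma\tau})^{rr}=(\sigma\tau)^{-1}$, whereas the partial CohFT $C^{r;\sigma,\tau}_{g,n}$ is asserted with the metric $\eta^\tau$, for which $(\eta^\tau)^{rr}=\tau^{-1}$. The discrepancy is a factor $\sigma^{-1}$ occurring only in the $(\mu,\nu)=(r,r)$ term, and it is compensated exactly by the shift $\vd_1+\vd_2=\vd+1$ (resp.\ $\vd(r,g-1,(\valpha,r,r))=\vd(r,g,\valpha)+1$) that occurs only in that same term. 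The verification therefore requires splitting into the two cases $\mu+\nu=r$ and $\mu=\nu=r$ and checking that the $\sigma$-exponents balance separately in each; your single uniform identity papers over precisely the case distinction that makes the computation work. Once this case analysis is done, the conclusions you state (including the final form of~\eqref{eq:loop gluing property} and the fact that $\frac{1-(1-\sigma\tau)^{-r}}{\sigma}$ is a power series in $\sigma$ and $\tau$) are correct.
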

\begin{proof}
From Lemma~\ref{lemma:mfc-class, vanishing} it follows that the coefficient of $\tau^i$, $0\le i\le g+\vd(r,g,\valpha)$, in the power series $G^{r;1-\tau}_{g,n}(\alpha_1,\ldots,\alpha_n)$ belongs to the space $\bigoplus_{j\ge g+\vd(r,g,\valpha)-i}H^{2j}(\oM_{g,n},\mbQ)$. Together with formula~\eqref{eq:Grt-class, specialization} this implies part 1 of the proposition. Part 2 follows from Proposition~\ref{proposition:Grt-class, properties}.
\end{proof}

\begin{remark}
Proposition~\ref{proposition:C-class -- partial CohFT} implies that the rescaled classes
\begin{gather}\label{rescaled classes}
\left(\frac{1-(1-\sigma\tau)^{-r}}{\sigma}\right)^{-g}C^{r;\sigma,\tau}_{g,n}(\otimes_{i=1}^n e_{\alpha_i})
\end{gather}
form a cohomological field theory with the phase space $V^\ext$ and the metric $\eta^\tau$.
\end{remark}

\subsubsection{Construction of maps $c^{r,\ext}_{g,n+1}$}

Consider the F-CohFT $C^{\bullet,r;\sigma,\tau}_{g,n+1}$ associated to the partial CohFT $C^{r;\sigma,\tau}_{g,n}$. Since $\left.C^{r;\sigma,\tau}_{g,n}(\otimes_{i=1}^n e_{\alpha_i})\right|_{\tau=0}=0$, if at least one of the $\alpha_i$'s is equal to $r$, we have
$$
C^{\bullet,r;\sigma,\tau}_{g,n+1}(e^{\alpha_0}\otimes\otimes_{i=1}^n e_{\alpha_i})\in H^*(\oM_{g,n+1},\mbQ)[[\sigma,\tau]].
$$
Define linear maps 
$$
c^{r,\ext}_{g,n+1}\colon (V^\ext)^*\otimes(V^\ext)^{\otimes n}\to H^*(\oM_{g,n+1},\mbQ)
$$
by
\begin{gather}\label{eq:definition of the extended r-spin theory}
c^{r,\ext}_{g,n+1}:=\left.C^{\bullet,r;\sigma,\tau}_{g,n+1}\right|_{\sigma=\tau=0}.
\end{gather}

\begin{theorem}\label{theorem:main properties of the extended theory}
We have the following properties:
\begin{enumerate}
\item[1.] The maps $c^{r,\ext}_{g,n+1}$ form an F-CohFT with the phase space $V^\ext$.
\item[2.] Definition~\eqref{eq:definition of the extended r-spin theory} agrees with definition~\eqref{eq:F-CohFT in genus 0} in genus~$0$.
\item[3.] The degree of the class $c^{r,\ext}_{g,n+1}(e^{\alpha_0}\otimes\otimes_{i=1}^n e_{\alpha_i})$ is given by
\begin{gather}\label{eq:degree of the extended class}
\deg c^{r,\ext}_{g,n+1}(e^{\alpha_0}\otimes\otimes_{i=1}^n e_{\alpha_i})=2\left(\frac{g(2r-2)-(\alpha_0-1)+\sum_{i=1}^n(\alpha_i-1)}{r}\right).
\end{gather}
\item[4.] If $1\le \alpha_0\le r-1$, then
$$
c^{r,\ext}_{g,n+1}(e^{\alpha_0}\otimes\otimes_{i=1}^n e_{\alpha_i})=\lambda_g c^r_{g,n+1}(e_{r-\alpha_0}\otimes\otimes_{i=1}^n e_{\alpha_i}).
$$
\item[5.] Consider the gluing map $\gl\colon\oM_{g-1,n+3}\to\oM_{g,n+1}$. Then we have
\begin{gather}\label{eq:loop property for extended}
\gl^*c^{r,\ext}_{g,n+1}(e^{\alpha_0}\otimes\otimes_{i=1}^n e_{\alpha_i})=-r\cdot c^{r,\ext}_{g-1,n+3}(e^{\alpha_0}\otimes\otimes_{i=1}^n e_{\alpha_i}\otimes e_r\otimes e_r).
\end{gather}
\item[6.] The class $c^{r,\ext}_{g,n+2}(e^r\otimes e_r\otimes\otimes_{i=1}^n e_{\alpha_i})$ is invariant with respect to the map $\oM_{g,n+2}\to\oM_{g,n+2}$, induced by the permutation of the first two marked points.
\end{enumerate}
\end{theorem}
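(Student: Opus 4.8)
The plan is to read off every statement from the structural result already in hand: by Proposition~\ref{proposition:C-class -- partial CohFT} the classes $C^{r;\sigma,\tau}_{g,n}$ form a partial CohFT with phase space $V^\ext$ and metric $\eta^\tau$ (satisfying the loop formula \eqref{eq:loop gluing property}), and by definition \eqref{eq:definition of the extended r-spin theory} the maps $c^{r,\ext}_{g,n+1}$ are the $\sigma=\tau=0$ specializations of the associated F-CohFT $C^{\bullet,r;\sigma,\tau}_{g,n+1}$. For part~1 I would note that the three axioms of Definition~\ref{definition:F-CohFT} hold for $C^{\bullet,r;\sigma,\tau}_{g,n+1}$ identically in $\sigma,\tau$ (the associated F-CohFT of any partial CohFT obeys them), each being an identity between classes that, as observed just before the theorem, lie in $H^\even(\oM_{g,\bullet},\mbQ)[[\sigma,\tau]]$; setting $\sigma=\tau=0$ preserves the identities, and the normalization $c^{r,\ext}_{0,3}(e^\alpha\otimes e_\beta\otimes e_1)=(\eta^\tau)^{\alpha\mu}\eta^\tau_{\mu\beta}=\delta^\alpha_\beta$ is $\tau$-independent. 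Part~4 is then immediate from part~1 of Proposition~\ref{proposition:C-class -- partial CohFT}: for $1\le\alpha_0\le r-1$ index raising gives $c^{r,\ext}_{g,n+1}(e^{\alpha_0}\otimes\otimes_{i=1}^n e_{\alpha_i})=C^{r;\sigma,\tau}_{g,n+1}(e_{r-\alpha_0}\otimes\otimes_{i=1}^n e_{\alpha_i})\big|_{\sigma=\tau=0}$, whose $\sigma^0\tau^0$-term is $\lambda_g c^r_{g,n+1}(e_{r-\alpha_0}\otimes\otimes_{i=1}^n e_{\alpha_i})$.

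Part~3 is degree bookkeeping. For $1\le\alpha_0\le r-1$ it follows from part~4 and $\deg\lambda_g c^r_{g,n+1}=2(g+\vd(r,g,(r-\alpha_0,\alpha_1,\dots,\alpha_n)))$, which equals \eqref{eq:degree of the extended class} after a short computation. For $\alpha_0=r$ one uses that $c^{r,\ext}_{g,n+1}(e^r\otimes\cdots)=\tau^{-1}C^{r;\sigma,\tau}_{g,n+1}(e_r\otimes\cdots)\big|_{\sigma=\tau=0}$ is the $\sigma^0\tau^1$-coefficient of $C^{r;\sigma,\tau}_{g,n+1}(e_r\otimes\cdots)$, which by part~1 of Proposition~\ref{proposition:C-class -- partial CohFT} is homogeneous of degree $2(g+\vd(r,g,(r,\alpha_1,\dots,\alpha_n))-1)$; again this matches the claim. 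For part~2 I would specialize to $g=0$, where $\mbE=0$ and $R^0\pi_*\mcL=0$, so $G^{r;t}_{0,n+1}$ is a genuine polynomial in $1-t$: the case $1\le\alpha_0\le r-1$ reduces by part~4 (with $\lambda_0=1$) to the first line of \eqref{eq:F-CohFT in genus 0}, and for $\alpha_0=r$ I would use Lemma~\ref{lemma:0 and r} to write $G^{r;1-\sigma\tau}_{0,n+1}(r,\alpha_1,\dots,\alpha_n)=\sigma\tau\,p_*\mfc_{1-\sigma\tau}(R^1\pi_*\mcL)$ and then extract, via $\mfc_{0,k}=c_k$ from Lemma~\ref{lemma:mfc-class for vector bundle}, exactly the class $(-1)^{\vd-1}r\,p_*c_{\vd-1}(R^1\pi_*\mcL)$ of the second line.

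The substantive point is part~5. I would apply \eqref{eq:loop gluing property} (with $n+1$ in place of $n$) to $C^{r;\sigma,\tau}_{g,n+1}(e_\rho\otimes\otimes_{i=1}^n e_{\alpha_i})$, where $e_\rho$ is the image of $e^{\alpha_0}$ under raising, obtaining
\begin{gather*}
\gl^*c^{r,\ext}_{g,n+1}(e^{\alpha_0}\otimes\otimes_{i=1}^n e_{\alpha_i})=\left[(\eta^\tau)^{\alpha_0\rho}\frac{1-(1-\sigma\tau)^{-r}}{\sigma}(\eta^\tau)^{\mu\nu}C^{r;\sigma,\tau}_{g-1,n+3}(e_\rho\otimes\otimes_{i=1}^n e_{\alpha_i}\otimes e_\mu\otimes e_\nu)\right]_{\sigma=\tau=0}.
\end{gather*}
The key algebraic facts are $\frac{1-(1-\sigma\tau)^{-r}}{\sigma}=-r\tau+O(\sigma\tau^2)$ together with $(\eta^\tau)^{rr}=\tau^{-1}$, while $(\eta^\tau)^{\mu,r-\mu}=1$ for $1\le\mu\le r-1$ carries no negative power of $\tau$. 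Thus in the inner contraction only the term $\mu=\nu=r$ survives the limit: its factor $(\eta^\tau)^{rr}=\tau^{-1}$ cancels the $-r\tau$ to leave $-r$, whereas each term with $1\le\mu\le r-1$ is killed either by the explicit prefactor $\tau$ (when $1\le\alpha_0\le r-1$) or, when $\alpha_0=r$, by the $\tau$-divisibility of $C^{r;\sigma,\tau}_{g-1,n+3}(e_r\otimes\cdots)$ coming from the basic vanishing $C^{r;\sigma,\tau}(\cdots)|_{\tau=0}=0$ for an $r$-index (Lemma~\ref{lemma:0 and r}). What remains is exactly $-r\,(\eta^\tau)^{\alpha_0\rho}C^{r;\sigma,\tau}_{g-1,n+3}(e_\rho\otimes\otimes_{i=1}^n e_{\alpha_i}\otimes e_r\otimes e_r)\big|_{\sigma=\tau=0}=-r\,c^{r,\ext}_{g-1,n+3}(e^{\alpha_0}\otimes\otimes_{i=1}^n e_{\alpha_i}\otimes e_r\otimes e_r)$.

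The main obstacle is conceptual rather than computational: one must resist the temptation to declare the right-hand side of part~5 zero on the grounds that it has two $e_r$ inputs. A class with several $e_r$ entries is \emph{not} divisible by a high power of $\tau$, because in positive genus $G^{r;t}(0,\dots,0)$ acquires poles at $t=1$, so the correct divisibility must be tracked through $G^{r;t}(r,\dots,r)$, which is regular by Lemma~\ref{lemma:Grt-class, no pole}; this is precisely why the non-separating gluing is controlled by a nonzero recursion rather than vanishing, and it is the heart of the statement. Finally, part~6 is immediate: $c^{r,\ext}_{g,n+2}(e^r\otimes e_r\otimes\otimes_{i=1}^n e_{\alpha_i})=\tau^{-1}C^{r;\sigma,\tau}_{g,n+2}(e_r\otimes e_r\otimes\otimes_{i=1}^n e_{\alpha_i})\big|_{\sigma=\tau=0}$, and since $C^{r;\sigma,\tau}_{g,n+2}$ is $S_{n+2}$-equivariant as a partial CohFT, the transposition of the first two marked points — whose two inputs are the identical vector $e_r$ — fixes the class, hence fixes its regular specialization.
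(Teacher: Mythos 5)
Your proposal is correct and follows essentially the same route as the paper: every part is deduced from Proposition~\ref{proposition:C-class -- partial CohFT} and the specialization $\sigma=\tau=0$ in definition~\eqref{eq:definition of the extended r-spin theory}, with part~2 handled by the genus-$0$ vanishing $R^0\pi_*\mcL=0$ and Lemma~\ref{lemma:mfc-class for vector bundle}. Your careful bookkeeping of the powers of $\tau$ in part~5 (and the observation that only simple $\tau$-divisibility is available for classes with several $e_r$ insertions) is just an expanded version of the paper's one-line justification via $\left.\frac{1-(1-\sigma\tau)^{-r}}{\sigma\tau}\right|_{\sigma=\tau=0}=-r$.
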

\begin{proof}
Parts 1 and 6 of the theorem are obvious from definition~\eqref{eq:definition of the extended r-spin theory}. Parts~3 and~4 follow from part 1 of Proposition~\ref{proposition:C-class -- partial CohFT}. Part 5 follows from equation~\eqref{eq:loop gluing property}, because $\left.\frac{1-(1-\sigma\tau)^{-r}}{\sigma\tau}\right|_{\sigma=\tau=0}=-r$. 

It remains to prove part 2 of the theorem. Since $R^0\pi_*\mcL=0$, by Lemma~\ref{lemma:mfc-class for vector bundle}, we have
$$
\mfc_t(-R^\bullet\pi_*\mcL)=\mfc_t(R^1\pi_*\mcL)=\sum_{i=0}^{\vd(r,0,\valpha)}\sum_{j\ge\vd(r,0,\valpha)-i}\mfc_{i,j}(R^1\pi_*\mcL)\tau^i, 
$$
where $\mfc_{i,\vd(r,0,\valpha)-i}(R^1\pi_*\mcL)=c_{\vd(r,0,\valpha)-i}(R^1\pi_*\mcL)$. Therefore,
\begin{align*}
\left.C^{r;\sigma,\tau}_{0,n}(\otimes_{i=1}^n e_{\alpha_i})\right|_{\sigma=0}=&\left.\left[r\sum_{i=0}^{\vd(r,0,\valpha)}\sum_{j\ge\vd(r,0,\valpha)-i}(-1)^j p_*\mfc_{i,j}(R^1\pi_*\mcL)\sigma^{j-\vd(r,0,\valpha)+i}\tau^i\right]\right|_{\sigma=0}=\\
=&r\sum_{i=0}^{\vd(r,0,\valpha)}(-1)^{\vd(r,0,\valpha)-i} p_*c_{\vd(r,0,\valpha)-i}(R^1\pi_*\mcL)\tau^i,
\end{align*}
which proves part 2 of the theorem.
\end{proof}

Note that property 3 from this theorem means that the F-CohFT $c^{r,\ext}_{g,n+1}$ is graded, $\deg c^{r,\ext}_{g,n+1}=\gamma g$, with
$$
\deg e_\alpha=2\frac{\alpha-1}{r},\qquad \gamma=2\frac{2r-2}{r}.
$$


\section{Intersection theory with the classes $c^{2,\ext}_{g,n+1}$ and the discrete KdV hierarchy}

In this section we prove that the intersection theory with the extended $2$-spin classes in all genera is controlled by the discrete KdV hierarchy. In Sections~\ref{subsection:differential polynomials} and~\ref{subsection:shift operators} we list the main facts about the spaces of differential polynomials and shift operators and also recall the definition of the discrete KdV hierarchy. The main results are formulated in Section~\ref{subsection:main results} and the proofs are given in Section~\ref{subsection:proofs}.

\subsection{Differential polynomials}\label{subsection:differential polynomials}

Let $N\ge 1$ and consider formal variables $w^\alpha_d$, where $1\le\alpha\le N$ and $d\ge 0$. We will use the notations $w^\alpha:=w^\alpha_0$, $w^\alpha_x:=w^\alpha_1, w^\alpha_{xx}:=w^\alpha_2, \ldots$. The ring~$\cA_{w^1,\ldots,w^N}$ of {\it differential polynomials} is defined as the ring of polynomials in the variables~$w^\alpha_i, i>0$, with coefficients in the ring of formal power series in the variables $w^\alpha=w^\alpha_0$. We can differentiate a differential polynomial with respect to $x$ by applying the operator $\partial_x := \sum_{i\geq 0} w^\alpha_{i+1}\frac{\d}{\d w^\alpha_i}$. Let us introduce a grading $\deg w^\alpha_i=i$ and define $\cA^{[k]}_{w^1,\ldots,w^N}$ as the subspace of $\cA_{w^1,\ldots,w^N}$ of degree~$k$.

Introduce a new formal variable~$\eps$ with $\deg\eps = -1$. Let $\hcA_{w^1,\ldots,w^N}:=\cA_{w^1,\ldots,w^N}[[\eps]]$ and define~$\hcA^{[k]}_{w^1,\ldots,w^N}$ as the subspace of $\hcA_{w^1,\ldots,w^N}$ of degree~$k$. Elements of $\hcA_{w^1,\ldots,w^N}$ will be also called differential polynomials. We denote by $\hcA^{\ev}_{w^1,\ldots,w^N}\subset\hcA_{w^1,\ldots,w^N}$ the subring formed by differential polynomials, which contain only even powers of~$\eps$.

\subsection{Ring of shift operators and the discrete KdV hierarchy}\label{subsection:shift operators}

Let $\Lambda:=e^{i\eps\d_x}$ and consider the space of formal operators of the form
\begin{gather}\label{eq:general shift operator}
A=\sum_{n=-\infty}^ma_n\Lambda^n,\quad a_n\in\hcA_{w^1,\ldots,w^N},\quad m\in\mbZ.
\end{gather}
The multiplication in this space is given by 
$$
(f\Lambda^m)\cdot(g\Lambda^n):=f(e^{im\eps\d_x}g)\Lambda^{m+n},\quad f,g\in\hcA_{w^1,\ldots,w^N},\quad m,n\in\mbZ.
$$
The resulting ring is called the {\it ring of shift operators}.

For an operator~\eqref{eq:general shift operator} let
$$
A_+:=\sum_{n=0}^ma_n\Lambda^n.
$$
It is not hard to see that if $m\ge 2$ and $a_m=1$, then there exists a unique operator $B$ of the form
$$
B=\Lambda+\sum_{n=-\infty}^0 b_n\Lambda^n,
$$
such that $B^m=A$. We will denote it by $A^{\frac{1}{m}}:=B$.

Let $u$ and $v$ be formal variables and consider the ring of shift operators with coefficients from~$\hcA_{u,v}$. In this ring, we consider the operator
$$
L=\Lambda^2+\frac{v+\Lambda v}{2}\Lambda+u.
$$
The {\it discrete KdV hierarchy} is the system of evolutionary partial differential equations with the dependent variables $u$ and $v$, given by
\begin{gather}\label{eq:discrete KdV hierarchy}
\frac{\d L}{\d\tau_d}=(i\eps)^{-1}\frac{2^d}{(2d+1)!!}\left[L^{d+\frac{1}{2}}_+,L\right],\quad d\ge 0.
\end{gather}
In particular, $\frac{\d u}{\d\tau_d}=0$ and
\begin{gather}\label{eq:first flow of discrete KdV}
\frac{\d v}{\d\tau_0}=-\frac{1}{4}\d_xR\left(v^2-4u\right),\quad\text{where}\quad R:=\frac{2}{i\eps\d_x}\frac{\Lambda-1}{\Lambda+1}.
\end{gather}

Hierarchy~\eqref{eq:discrete KdV hierarchy} was first introduced in~\cite{Fre96}. Equation~\eqref{eq:first flow of discrete KdV} can be considered as a system of differential equations for the infinite sequence of functions $f_n(\tau_0)$, with fixed constants $\alpha_n$, $n\in\mbZ$, given by $f_n(\tau_0)=v(x,\tau_0)|_{x=in\eps}$, $\alpha_n=u(x,\tau_0)|_{x=in\eps}$. This system appeared in the literature under the name {\it dressing chain} (see e.g.~\cite{VS93}).

\subsection{Main results}\label{subsection:main results}

We denote by $\psi_i\in H^2(\oM_{g,n},\mbQ)$ the first Chern class of the contangent line bundle over~$\oM_{g,n}$ attached to the $i$-th marked point. Introduce formal power series
\begin{align*}
F^\alpha(t^*_*,\eps):=&\sum_{g,n\ge 0}\frac{\eps^{2g}}{n!}\sum_{\substack{1\le\alpha_1,\ldots,\alpha_n\le 2\\d_1,\ldots,d_n\ge 0}}\left(\int_{\oM_{g,n+1}}c^{2,\ext}_{g,n+1}(e^\alpha\otimes\otimes_{i=1}^n e_{\alpha_i})\prod_{i=1}^{n}\psi_{i+1}^{d_i}\right)\prod_{i=1}^n t^{\alpha_i}_{d_i},\quad \alpha=1,2,\\
w^\alpha(t^*_*,\eps):=&\frac{\d F^\alpha}{\d t^1_0}.
\end{align*}
Here we define the expression $\left(\int_{\oM_{g,n}}\cdot\right)$ to be zero, if the stability condition $2g-2+n>0$ is violated. We will always identify $t^1_0=x$. Note that the vector potential of the F-manifold, corresponding to the extended $2$-spin theory, is given by~\cite[Lemma 3.8]{BCT17}
\begin{gather}\label{eq:vector potential for extended 2-spin}
\mcF^1(v^1,v^2)=\frac{(v^1)^2}{2},\qquad \mcF^2(v^1,v^2)=v^1v^2-\frac{(v^2)^3}{12}.
\end{gather}

\begin{theorem}\label{theorem:discrete KdV}
The formal powers series
\begin{gather}\label{eq:series u and w}
u(t^*_*,\eps):=\frac{e^{i\eps\d_x/2}-e^{-i\eps\d_x/2}}{i\eps\d_x}w^1(t^*_*,\eps),\qquad v(t^*_*,\eps):=\frac{e^{i\eps\d_x/2}-e^{-i\eps\d_x/2}}{i\eps\d_x}w^2(t^*_*,\eps)
\end{gather}
satisfy the discrete KdV hierarchy after the identification $\tau_d=t^2_d$.
\end{theorem}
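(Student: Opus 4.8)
The plan is to obtain $u,v$ as the solution of the Dubrovin--Zhang hierarchy attached to the semisimple cohomological field theory underlying our F-CohFT, and then to identify this hierarchy with the discrete KdV hierarchy by exploiting its homogeneity.

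\textbf{Setting up the hierarchy.} By the Remark following Proposition~\ref{proposition:C-class -- partial CohFT}, the rescaled classes~\eqref{rescaled classes} form a genuine cohomological field theory with metric $\eta^\tau$, which is semisimple for generic $\sigma,\tau$. First I would invoke~\cite{BPS12} and~\cite{DZ05}: its Dubrovin--Zhang hierarchy exists as a system of commuting evolutionary Hamiltonian PDEs, homogeneous because the CohFT is graded. Writing these flows in evolutionary (non-Hamiltonian) form and letting $\sigma=\tau=0$, the classes limit to $c^{2,\ext}_{g,n+1}$ by~\eqref{eq:definition of the extended r-spin theory}, so the right-hand sides of the flows have a well-defined limit; the inverse metric $(\eta^\tau)^{\mu\nu}$ blows up, so the Hamiltonian structure is lost, but the flows survive and are satisfied by the series $w^1,w^2$ of Section~\ref{subsection:main results}, with flows indexed by the descendent times $t^\beta_d$. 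One consequence is immediate and matches a structural feature of discrete KdV: by part~4 of Theorem~\ref{theorem:main properties of the extended theory} together with the vanishing of $c^2_{g,n}$ whenever an insertion equals $e_2$, the class $c^{2,\ext}_{g,n+1}(e^1\otimes\otimes_{i=1}^n e_{\alpha_i})$ vanishes whenever some $\alpha_i=2$; hence $F^1$ contains no $t^2$-variable, $w^1$ is independent of every $t^2_d$, and so is $u$. This is precisely $\frac{\d u}{\d\tau_d}=0$.

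\textbf{Matching the dispersionless limits.} At $\eps=0$ the hierarchy is the principal hierarchy of the flat F-manifold with vector potential~\eqref{eq:vector potential for extended 2-spin}, whose primary $t^2_0$-flow is $\partial_{t^2_0}v^\alpha=\partial_x\left(\frac{\d\mcF^\alpha}{\d v^2}\right)$, i.e. $\partial_{t^2_0}v^1=0$ and $\partial_{t^2_0}v^2=v^1_x-\frac{v^2}{2}v^2_x$. Since the operator $\frac{e^{i\eps\d_x/2}-e^{-i\eps\d_x/2}}{i\eps\d_x}=1+O(\eps^2)$ in~\eqref{eq:series u and w} is a near-identity (Miura-type) change of variables, at $\eps=0$ one has $u=w^1=v^1$ and $v=w^2=v^2$, and the above coincides with the dispersionless limit of the first discrete KdV flow~\eqref{eq:first flow of discrete KdV} (where $R\to1$). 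Thus the genus-$0$ parts agree and~\eqref{eq:series u and w} is recognised as the transformation carrying the Dubrovin--Zhang field into the Lax field of $L=\Lambda^2+\frac{v+\Lambda v}{2}\Lambda+u$.

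\textbf{Dispersive corrections and uniqueness.} This is where homogeneity carries the argument. With $\deg e_1=0$, $\deg e_2=1$, $\gamma=2$ and $\deg\eps=-1$, each flow $\partial_{t^2_d}w^\alpha$ is a quasi-homogeneous differential polynomial whose $\eps$-expansion is constrained order by order, leaving only finitely many undetermined coefficients at each step. I would pin these down using the explicit first flow $\partial_{t^2_0}v$ matched against the full operator $R=\frac{2}{i\eps\d_x}\frac{\Lambda-1}{\Lambda+1}$ in~\eqref{eq:first flow of discrete KdV}, together with the string and dilaton equations for $F^\alpha$ and the commutativity of the flows, which fix the normalisation $\frac{2^d}{(2d+1)!!}$ of the higher flows and hence make the identification $\tau_d=t^2_d$ exact. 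The conclusion then follows from a uniqueness statement: a homogeneous system of commuting evolutionary PDEs with the prescribed dispersionless limit and compatible with the string equation is unique. Since the discrete KdV flows~\eqref{eq:discrete KdV hierarchy} share the same dispersionless limit, homogeneity, commutativity and the property $\frac{\d u}{\d\tau_d}=0$, they must coincide with the limiting Dubrovin--Zhang flows.

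\textbf{The main obstacle.} The hard part is the dispersive step: because the Poisson structure degenerates as $\sigma,\tau\to0$, one cannot organise the corrections through the bi-Hamiltonian recursion or central invariants of Dubrovin--Zhang theory, and homogeneity must replace them entirely. Proving the required rigidity in this degenerate setting, and checking that the near-identity operator in~\eqref{eq:series u and w} is exactly the transformation intertwining the limiting flows with the Lax flows of $L$ at all orders in $\eps$ (not merely at leading symbol), are the delicate points on which the proof hinges.
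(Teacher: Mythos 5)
Your overall strategy --- realise $w^1,w^2$ as a solution of the limit of the Dubrovin--Zhang hierarchy of the semisimple deformation, then identify that limit with discrete KdV via homogeneity, commutativity and a uniqueness argument --- is the same as the paper's (Sections~\ref{subsubsection:step 1}--\ref{subsubsection:step 5}), and the first part of your argument is sound: it matches Proposition~\ref{proposition:hierarchy of topological type for extended}, including the observation that $w^1$ is independent of the $t^2_d$ so that $\frac{\d u}{\d\tau_d}=0$.

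However, the uniqueness principle you invoke at the end is false as stated, and this is a genuine gap. A homogeneous system of commuting evolutionary PDEs is \emph{not} determined by its dispersionless limit together with commutativity and the string equation: Proposition~\ref{proposition:main proposition about compatibility} shows that imposing commutativity of $\frac{\d}{\d t^2_0}$ and $\frac{\d}{\d t^1_1}$ on a system with the correct dispersionless limit and $\odeg$-homogeneity still leaves a free complex parameter $\theta$ (a rescaling $\eps\mapsto\theta\eps$ inside the operator $R$), and in addition the relation between $w^2$ and the Lax variable involves an undetermined constant-coefficient operator $X=1+\sum_{g\ge1}X_g(\eps\d_x)^{2g}$. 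Fixing these requires geometric input beyond what you list: the string and dilaton equations give only the two relations \eqref{eq:first relation for hX} and \eqref{eq:second relation for hX} among the three unknown series $\hX$, $I_1$, $I_2$; the third relation \eqref{eq:third relation for hX} comes from the Liu--Pandharipande relation \eqref{eq:main cohomological relation} in $H^{4g}(\oM_{g,1},\mbQ)$, and $\theta^2=1$ is obtained from a genus-one integral evaluated via the gluing formula \eqref{eq:loop property for extended}. Without these ingredients your argument cannot produce the operator $R$ in the dispersive tails at all, let alone match the normalisation $\frac{2^d}{(2d+1)!!}$. The correct uniqueness statement, Lemma~\ref{lemma:second uniqueness lemma}, also takes as input the \emph{full} dispersive first flow $\frac{\d}{\d t^2_0}$, not merely its $\eps=0$ limit; only after Steps 2 and 3 pin that flow down exactly does uniqueness of the higher commuting flows --- and hence the identification with \eqref{eq:discrete KdV hierarchy}, after the separate check that $\left.\DK_{2,d}\right|_{\eps=0,\,u=0}=\frac{v^{2d+2}}{(-4)^{d+1}(d+1)!}$ --- go through.
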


This theorem doesn't say anything about the flows $\frac{\d}{\d t^1_d}$. The equations for them are described by the following two statements. Let us introduce an additional grading $\odeg$ in the ring~$\hcA_{u,v}$ by
$$
\odeg u_d:=2,\qquad \odeg v_d:=1,\qquad \odeg\eps:=0.
$$

\begin{theorem}\label{theorem:existence of the extended discrete KdV}
1. There exists a unique sequence of flows $\frac{\d}{\d t^1_d}$, $d\ge 0$, of the form
\begin{align*}
\frac{\d u}{\d t^1_d}=&\frac{u^d}{d!}u_x,\\
\frac{\d v}{\d t^1_d}=&\d_x\DK_{1,d},\quad \DK_{1,d}\in\hcA^{[0]}_{u,v},
\end{align*}
which commute with the flows of the discrete KdV hierarchy and satisfy the properties
$$
\DK_{1,0}=v,\qquad\odeg\DK_{1,d}=2d+1,\qquad\left.\DK_{1,d}\right|_{\substack{\eps=0\\u=0}}=\frac{v^{2d+1}}{(-2)^d(2d+1)!!}.
$$
2. Moreover, we have
\begin{align*}
\DK_{1,1}=&-\frac{v^3}{24}+\frac{1}{2}uv-\frac{1}{8}R\left[R^{-1}v\cdot(v^2-4u)\right]-\frac{\eps^2}{32}R\left[v\cdot\d_x^2 R(v^2-4u)\right].
\end{align*}
\end{theorem}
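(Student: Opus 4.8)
\emph{Reduction.} The $u$-component $\frac{\partial u}{\partial t^1_d}=\frac{u^d}{d!}u_x$ is prescribed, so the entire content of part~1 is the determination of the densities $\DK_{1,d}$ governing the $v$-flow. The prescribed $u$-component is automatically compatible with the discrete KdV hierarchy: since $\frac{\partial u}{\partial\tau_d}=0$ by \eqref{eq:discrete KdV hierarchy}, both sides of $\frac{\partial}{\partial\tau_d}\frac{\partial u}{\partial t^1_e}=\frac{\partial}{\partial t^1_e}\frac{\partial u}{\partial\tau_d}$ vanish identically. Thus the only relations to solve are those for the $v$-flow, namely $\partial_x\frac{\partial}{\partial\tau_e}\DK_{1,d}=\frac{\partial}{\partial t^1_d}\frac{\partial v}{\partial\tau_e}$ for all $e\ge 0$. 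I would solve these, together with the normalization, order by order in $\eps$, exploiting the two gradings $\deg$ and $\odeg$: since $\DK_{1,d}\in\hcA^{[0]}_{u,v}$ has standard degree $0$, its coefficient of $\eps^{2k}$ is a differential polynomial of derivative order $2k$, so at each order the unknown lives in a finite-dimensional space.

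\emph{Uniqueness} is the clean half. The difference of two solutions is a flow with $\frac{\partial u}{\partial s}=0$ and $\frac{\partial v}{\partial s}=\partial_x D$, where $D:=\DK_{1,d}-\DK'_{1,d}\in\hcA^{[0]}_{u,v}$ has $\odeg D=2d+1$, commutes with the discrete KdV hierarchy, and vanishes at $\eps=0,\,u=0$; the goal is $D=0$. At $\eps=0$, standard degree $0$ forces $D|_{\eps=0}=D_0(u,v)$ to be free of $x$-derivatives. Imposing commutativity with the first flow \eqref{eq:first flow of discrete KdV} (where $R|_{\eps=0}=1$) and matching the coefficient of $u_x$ gives the transport equation
\begin{equation*}
\frac{\partial D_0}{\partial v}+\frac{v}{2}\frac{\partial D_0}{\partial u}=0,
\end{equation*}
whose polynomial solutions are functions of $u-\tfrac{v^2}{4}$. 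Since $\odeg\bigl(u-\tfrac{v^2}{4}\bigr)=2$ is even while $\odeg D_0=2d+1$ is odd, we get $D_0=0$. The same parity obstruction, applied to the lowest nonvanishing $\eps$-coefficient of $D$ and using commutativity with the full hierarchy, propagates the vanishing inductively to all orders in $\eps^2$, giving $D=0$.

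\emph{Existence} I would establish by the same order-by-order scheme. The leading ($\eps=0$) term is fixed by the flat F-manifold \eqref{eq:vector potential for extended 2-spin}: the $t^1_d$-flows reduce in the dispersionless limit to its principal hierarchy, which exists by the general theory of \cite{Ma05,AL18}; solving the inhomogeneous transport equation (the $\eps=0$ reduction of the commutation relations) with $\odeg=2d+1$ yields a unique odd-$\odeg$ density, and one checks it matches $\DK_{1,0}=v$ and $\DK_{1,d}|_{\eps=0,u=0}=\frac{v^{2d+1}}{(-2)^d(2d+1)!!}$. The dispersive corrections are then determined recursively; the crucial point, and the main obstacle, is the solvability of these deformation equations at every order. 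Here I would invoke the semisimple CohFT obtained by rescaling \eqref{rescaled classes}: by \cite{BPS12} it carries a Dubrovin--Zhang hierarchy whose flows pairwise commute, and—although its Hamiltonian structure degenerates—the limit $\sigma=\tau=0$ produces a commuting family containing both the discrete KdV flows (Theorem~\ref{theorem:discrete KdV}) and the sought $t^1_d$-flows, which guarantees that the obstruction vanishes. Homogeneity then forces the limiting flows into the stated form.

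\emph{Part 2.} Granting part~1, I compute $\DK_{1,1}$ directly from commutativity of the $t^1_1$-flow with the first discrete KdV flow \eqref{eq:first flow of discrete KdV}. Solving the $\eps=0$ transport equation for $\odeg=3$ immediately yields the local part $uv-\tfrac{v^3}{6}=-\tfrac{v^3}{24}+\tfrac12 uv-\tfrac18 v(v^2-4u)$, consistent with the normalization. Expanding $\frac{\partial}{\partial\tau_0}\partial_x\DK_{1,1}=\frac{\partial}{\partial t^1_1}\bigl(-\tfrac14\partial_x R(v^2-4u)\bigr)$ and solving order by order in $\eps^2$, the nonlocal operator $R$ entering the right-hand side forces the appearance of $R$ and $R^{-1}$ in the corrections; resumming the resulting $\eps$-series into closed form produces the two nonlocal terms, while the remaining local ambiguity is pinned by oddness of $\odeg$ together with the normalization. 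The main difficulty here is precisely recognizing and resumming the nonlocal $R$-dependence, rather than the routine linear algebra at each order in $\eps^2$.
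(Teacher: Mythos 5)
Your overall architecture coincides with the paper's. Existence is obtained exactly as in the paper: the $\sigma=\tau=0$ limit of the Dubrovin--Zhang hierarchy of the rescaled semisimple CohFT \eqref{rescaled classes} (Proposition~\ref{proposition:hierarchy of topological type for extended}) supplies a commuting family whose $t^2_d$-flows are identified with discrete KdV (Theorem~\ref{theorem:discrete KdV}), and whose $t^1_d$-flows have the stated homogeneity and initial conditions. Part~2 as you describe it is in substance the derivation/verification carried out in Proposition~\ref{proposition:main proposition about compatibility} and Proposition~\ref{proposition:flows (2,0) and (1,1)}; be aware that the ``resumming'' step is where the real work sits --- verifying that the closed nonlocal expression actually commutes with $\frac{\d}{\d\tau_0}$ requires the identity \eqref{eq:identiity with Lambda} for $\frac{\Lambda-1}{\Lambda+1}$, which is not a formality.

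The genuine gap is in your uniqueness induction. At order $\eps^0$ your argument is correct: the kernel of the linearized dispersionless flow on derivative-free densities consists of functions of $u-\frac{v^2}{4}$, whose $\odeg$ is even, so odd $\odeg$ kills it. But at order $\eps^{2k}$, $k\ge 1$, the unknown coefficient is a differential polynomial of differential degree $2k$, and what must be shown is that the kernel of the relevant linearized operator on $\cA^{[2k]}_{u,v}$ is \emph{zero}; this is not a parity statement and does not follow from the $k=0$ case. The paper's Lemma~\ref{lemma:second uniqueness lemma} proves exactly this, by first passing to coordinates in which the flow $\frac{\d}{\d\tau_0}$ takes the normal form $\frac{\d w}{\d t}=\d_xQ(w,\eps)+u_x$ with $Q$ depending on $w$ only (a Miura transformation $v=Rw$ is needed for this, which you do not mention), then decomposing the unknown by its $\odeg_u$-degree, handling the $u$-independent piece via Lemma~\ref{lemma:first uniqueness lemma} (a lexicographic leading-monomial argument on the monomials $w_\lambda$) and the $u$-dependent pieces via a separate leading-monomial argument in the $u_\lambda$. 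Note also that only commutativity with the single flow $\frac{\d}{\d\tau_0}$ is used there, not the full hierarchy. As written, ``the same parity obstruction propagates'' is an assertion that names the wrong mechanism; without the kernel-triviality computation your uniqueness claim, and hence also your determination of $\DK_{1,1}$ in Part~2 (which leans on uniqueness), is incomplete.
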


The infinite system of flows $\frac{\d}{\d t^2_d}$ of the discrete KdV hierarchy together with the new flows~$\frac{\d}{\d t^1_d}$, given by the last theorem, will be called the {\it extended discrete KdV hierarchy}. 

\begin{theorem}\label{theorem:extended discrete KdV}
The formal power series $u(t^*_*,\eps)$ and $v(t^*_*,\eps)$, given by~\eqref{eq:series u and w}, satisfy the extended discrete KdV hierarchy.
\end{theorem}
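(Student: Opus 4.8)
The plan is to deduce the statement from the two preceding theorems together with the uniqueness clause of Theorem~\ref{theorem:existence of the extended discrete KdV}. By Theorem~\ref{theorem:discrete KdV} the series $u$ and $v$ of~\eqref{eq:series u and w} already satisfy all the flows $\frac{\d}{\d t^2_d}$ of the discrete KdV hierarchy, so only the flows $\frac{\d}{\d t^1_d}$ remain. For these I would argue that the geometric time derivatives $\frac{\d u}{\d t^1_d}$ and $\frac{\d v}{\d t^1_d}$ are differential polynomials in $u,v$ of exactly the shape prescribed in part~1 of Theorem~\ref{theorem:existence of the extended discrete KdV}, that they commute with the discrete KdV flows, and that they obey the three normalizing conditions listed there; the uniqueness then forces them to coincide with the abstractly constructed extended flows, which is precisely the assertion. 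A key technical point, used repeatedly, is that the topological solution is \emph{generic}: any identity among differential polynomials in $u,v$ that holds after substitution of the series~\eqref{eq:series u and w} for all values of the times holds identically. This lemma upgrades the tautological commutation of the mixed partial derivatives of the single function $(u,v)(t^*_*,\eps)$ to the genuine commutativity of the corresponding evolutionary vector fields required by the uniqueness statement.

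The first flow and the $u$-equation are essentially explicit. Since $\frac{\d}{\d t^1_0}=\d_x$ under $t^1_0=x$, one has $\frac{\d v}{\d t^1_0}=v_x$ and $\frac{\d u}{\d t^1_0}=u_x$, giving $\DK_{1,0}=v$ and matching the case $d=0$ of $\frac{u^d}{d!}u_x$. For general $d$ I would first identify $F^1$: by part~4 of Theorem~\ref{theorem:main properties of the extended theory} and the normalization $c^2_{g,n}(e_1^{\otimes n})=1$, the insertion of $e^1$ collapses the correlators to the Hodge integrals $\int_{\oM_{g,n+1}}\lambda_g\prod_{i=1}^n\psi_{i+1}^{d_i}$, with all remaining entries forced to be $e_1$; in particular $F^1$ depends on the times $t^1_*$ only, consistent with $\frac{\d u}{\d t^2_d}=0$. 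Feeding the Faber--Pandharipande $\lambda_g$-formula into $w^1=\frac{\d F^1}{\d t^1_0}$ and applying the (constant-coefficient) operator of~\eqref{eq:series u and w}, a direct generating-function computation should yield $\frac{\d u}{\d t^1_d}=\frac{u^d}{d!}u_x$ on the nose. The homogeneity $\odeg\DK_{1,d}=2d+1$ is read off from the grading of the F-CohFT recorded after Theorem~\ref{theorem:main properties of the extended theory}, and the dispersionless limit $\DK_{1,d}|_{\eps=0,\,u=0}=v^{2d+1}/((-2)^d(2d+1)!!)$ follows from the genus~$0$ extended $2$-spin theory, i.e. from the vector potential~\eqref{eq:vector potential for extended 2-spin} of \cite{BCT17}.

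The crux is the polynomiality of the $v$-flow, namely that $\frac{\d v}{\d t^1_d}$ lies in $\d_x\hcA^{[0]}_{u,v}$. Here I would exploit the auxiliary \emph{honest} cohomological field theory~\eqref{rescaled classes}, which is semisimple for generic $\sigma,\tau$. By the results of \cite{BPS12} its Dubrovin--Zhang hierarchy exists and, crucially, its flows $\frac{\d}{\d t^1_d}$ are \emph{differential polynomials} in the dependent variables; homogeneity of the CohFT makes the whole hierarchy homogeneous and bounds the negative powers of $\sigma$ and $\tau$ appearing in the coefficients. One then passes to the limit $\sigma,\tau\to0$: although the metric $\eta^\tau$, and hence the Hamiltonian structure, degenerates, the degree bounds guarantee that the flows themselves survive as differential-polynomial evolutionary PDEs for the limiting F-CohFT, producing the sought expression $\frac{\d v}{\d t^1_d}=\d_x\DK_{1,d}$ with $\DK_{1,d}\in\hcA^{[0]}_{u,v}$. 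With polynomiality in hand, the genericity lemma turns the commutation of mixed partials into commutativity with the discrete KdV flows, and all hypotheses of the uniqueness part of Theorem~\ref{theorem:existence of the extended discrete KdV} are met; hence the geometric flows coincide with the extended ones and $u,v$ satisfy the extended discrete KdV hierarchy. I expect the main obstacle to be precisely the control of this limit: ensuring that no poles in $\sigma,\tau$ obstruct specialization (which is where homogeneity and degree estimates of the type in Lemma~\ref{lemma:mfc-class, vanishing} do the work) and that the surviving flows retain polynomial, $\eps$-graded coefficients.
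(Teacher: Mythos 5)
Your proposal is correct and follows essentially the same route as the paper: the polynomiality and homogeneity of the geometric flows are established exactly as in Proposition~\ref{proposition:hierarchy of topological type for extended} (via the auxiliary semisimple CohFT~\eqref{rescaled classes}, the Dubrovin--Zhang hierarchy of \cite{BPS12}, and degree bounds controlling the $\sigma,\tau\to0$ limit), after which the $\frac{\d}{\d t^2_d}$-flows are matched by Theorem~\ref{theorem:discrete KdV} and the $\frac{\d}{\d t^1_d}$-flows by the uniqueness clause of Theorem~\ref{theorem:existence of the extended discrete KdV} via Lemma~\ref{lemma:second uniqueness lemma}. The paper merely compresses this into the observation that the extended discrete KdV hierarchy \emph{is}, by the proof of Theorem~\ref{theorem:existence of the extended discrete KdV}, the Miura-transformed hierarchy of topological type, of which $(u,v)$ is a solution by construction; your genericity-of-the-topological-solution remark is the same device the paper uses implicitly when trading identities of power series in $t^*_*$ for identities of differential polynomials.
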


\begin{remark}
According to our knowledge, the existence of an extra infinite sequence of flows, commuting with the flows of the discrete KdV hierarchy, wasn't known before. An interesting problem is to find an explicit description of them. 
\end{remark}

\begin{remark}
We expect that for $r\ge 3$ the intersection theory with the extended $r$-spin classes is controlled by an extension of the discrete version of the corresponding higher Gelfand-Dickey hierarchy.
\end{remark}

\subsection{Proof of the main results}\label{subsection:proofs}

We split the proof of Theorems~\ref{theorem:discrete KdV}--\ref{theorem:extended discrete KdV} in several steps. In Section~\ref{subsubsection:step 1} we prove that the power series $w^\alpha(t^*_*,\eps)$ satisfy a system of evolutionary PDEs whose right-hand sides are differential polynomials with a certain homogeneity property. We call this system the hierarchy of topological type for the extended $2$-spin theory. We also determine explicitly the equations for $\frac{\d w^1}{\d t^\beta_d}$. In Section~\ref{subsubsection:step 2} we prove that the commutativity of the flows~$\frac{\d}{\d t^2_0}$ and~$\frac{\d}{\d t^1_1}$ of the hierarchy of topological type allows to determine them uniquely up to a certain rescaling parameter~$\theta\in\mbC$ and a differential operator~$X$ of the form $X=1+\sum_{g\ge 1}X_g(\eps\d_x)^{2g}$, $X_g\in\mbC$. In Section~\ref{subsubsection:step 3} we compute $\theta$ and $X$, using a relation in the cohomology group $H^{4g}(\oM_{g,1},\mbQ)$, found in~\cite{LP11}. Then in Section~\ref{subsubsection:step 4} we prove a lemma that allows to reconstruct uniquely all the higher flows of both the hierarchy of topological type and the discrete KdV hierarchy starting from the flow~$\frac{\d}{\d t^2_0}$. Finally, after the preliminary work, done in Sections~\ref{subsubsection:step 1}--\ref{subsubsection:step 4}, we prove Theorems~\ref{theorem:discrete KdV}--\ref{theorem:extended discrete KdV} in Section~\ref{subsubsection:step 5}.

\subsubsection{Step 1: hierarchy of topological type for the extended $2$-spin theory}\label{subsubsection:step 1}

Introduce the constants
$$
L_g:=\int_{\oM_{g,3}}\lambda_g\psi_1^{2g},\quad g\ge 0,
$$ 
and the differential operator $L:=1+\sum_{g\ge 1}L_g(\eps\d_x)^{2g}$. We know that~\cite[Theorem 2]{FP00}
$$
L=\frac{i\eps\d_x}{e^{i\eps\d_x/2}-e^{-i\eps\d_x/2}}=1+\frac{\eps^2}{24}\d_x^2+O(\eps^4).
$$
We define the degrees of the variables $w^1_n$ and $w^2_n$ to be
$$
\odeg w^1_n:=2,\qquad \odeg w^2_n:=1.
$$
\begin{proposition}\label{proposition:hierarchy of topological type for extended}
The formal power series $w^\alpha(t^*_*,\eps)$ satisfy a system of PDEs of the form
\begin{gather}\label{eq:hierarchy of topological type for extended}
\frac{\d w^\alpha}{\d t^\beta_d}=\d_x Q^\alpha_{\beta,d},\quad Q^\alpha_{\beta,d}\in\hcA^{\ev,[0]}_{w^1,w^2},\quad 1\le\alpha,\beta\le 2,\quad d\ge 0,
\end{gather}
where the differential polynomials $Q^\alpha_{\beta,d}$ have the following properties:
\begin{align*}
&Q^1_{1,d}=\frac{1}{(d+1)!}L\left((L^{-1}w^1)^{d+1}\right),&& Q^1_{2,d}=0,\\
&Q^2_{1,0}=w^2,&& \left.Q^2_{2,0}\right|_{\eps=0}=-\frac{1}{4}(w^2)^2+w^1,&& \left.Q^2_{\beta,d}\right|_{\substack{\eps=0\\w^1=0}}=\frac{(w^2)^{2d+\beta}}{(-2)^{d+\beta-1}(2d+\beta)!!},\\
&\odeg Q^2_{\beta,d}=2d+\beta.
\end{align*}
\end{proposition}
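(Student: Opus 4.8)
The plan is to translate the geometric properties of the F-CohFT and the partial CohFT established earlier in this section into statements about the generating series, exploiting the general machinery relating CohFTs to integrable hierarchies of topological type together with the grading (the $\odeg$ homogeneity) to pin down the shape of the right-hand sides. First I would recall that for any (partial) CohFT the correlators $\int_{\oM_{g,n+1}} c_{g,n+1}(\cdots)\prod\psi_i^{d_i}$ satisfy the string and dilaton equations and, via topological recursion relations in genus $0$ together with the general reconstruction results cited in the introduction (the existence of the Dubrovin--Zhang hierarchy for semisimple theories, here obtained in the $\sigma,\tau\to 0$ limit of the rescaled CohFT \eqref{rescaled classes}), that the derivatives $w^\alpha=\d F^\alpha/\d t^1_0$ evolve in the times $t^\beta_d$ by flows whose right-hand sides are $x$-derivatives of differential polynomials in the $w^\gamma_i$. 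That these differential polynomials lie in $\hcA^{\ev,[0]}_{w^1,w^2}$ is the standard statement that the flows of a hierarchy of topological type are homogeneous of differential degree $0$ and contain only even powers of $\eps$, the latter because the CohFT correlators carry the factor $\eps^{2g}$ and $\oM_{g,n}$ contributes only even-dimensional cohomology; I would justify this exactly as in the analogous Dubrovin--Zhang constructions for genuine CohFTs.

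Next I would establish the homogeneity statement $\odeg Q^2_{\beta,d}=2d+\beta$. The key input is the grading of the F-CohFT from Theorem~\ref{theorem:main properties of the extended theory}(3): with $\deg e_\alpha = 2\frac{\alpha-1}{r}$ and $r=2$ we have $\deg e_1=0$, $\deg e_2=1$, and $\deg e^\alpha=-\deg e_\alpha$. A correlator $\int_{\oM_{g,n+1}} c^{2,\ext}_{g,n+1}(e^\alpha\otimes\otimes e_{\alpha_i})\prod\psi_{i+1}^{d_i}$ is nonzero only when the total cohomological degree matches $\dim\oM_{g,n+1}=3g-2+n$, which after substituting \eqref{eq:degree of the extended class} yields a dimension constraint relating $g$, the insertions $\alpha_i$, the descendents $d_i$ and $\alpha$. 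Assigning to the variable $t^{\alpha_i}_{d_i}$ (equivalently to $w^{\alpha_i}_{\cdot}$) the weight dictated by this constraint reproduces precisely $\odeg w^1=2$, $\odeg w^2=1$, $\odeg\eps=0$, and then the flow $\d w^2/\d t^\beta_d$ inherits $\odeg=2d+\beta$. I would phrase this as a routine bookkeeping of the dimension axiom, taking care that the extra index $\alpha_0=r=2$ on the extended component contributes the correct weight.

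For the explicit formulas I would argue component by component. The flows $Q^1_{\beta,d}$: since $e^1$ corresponds (via Theorem~\ref{theorem:main properties of the extended theory}(4) with $\alpha_0=1$) to $\lambda_g c^r_{g,n+1}(e_{r-1}\otimes\cdots)$ and, crucially, $w^1=\d F^1/\d t^1_0$ is governed by the pure-$\lambda_g$ sector, the flows $\d w^1/\d t^\beta_d$ decouple; the appearance of the operator $L=i\eps\d_x/(e^{i\eps\d_x/2}-e^{-i\eps\d_x/2})$ and the formula $Q^1_{1,d}=\frac{1}{(d+1)!}L((L^{-1}w^1)^{d+1})$ should follow from the one-point constants $L_g=\int_{\oM_{g,3}}\lambda_g\psi_1^{2g}$ of \cite{FP00} combined with the trivial $r=2$ identity $c^2_{g,n}(e_1^{\otimes n})=1$; that $Q^1_{2,d}=0$ reflects that inserting $e_2$ cannot feed into the $w^1$ evolution by the dimension/degree constraint. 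The low-order data $Q^2_{1,0}=w^2$ and $\left.Q^2_{\beta,d}\right|_{\eps=0,w^1=0}=(w^2)^{2d+\beta}/((-2)^{d+\beta-1}(2d+\beta)!!)$ I would read off from the genus-$0$ vector potential \eqref{eq:vector potential for extended 2-spin}: the dispersionless limit of a hierarchy of topological type is determined by the flat F-manifold, and differentiating $\mcF^2=v^1v^2-(v^2)^3/12$ produces exactly these monomials, while $\left.Q^2_{2,0}\right|_{\eps=0}=-\frac14(w^2)^2+w^1$ matches the principal hierarchy of this F-manifold.

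The main obstacle I expect is twofold: first, making the reconstruction of the flows $Q^\alpha_{\beta,d}$ as genuine $x$-derivatives of differential polynomials fully rigorous in the F-CohFT (rather than CohFT) setting, since the Dubrovin--Zhang formalism is classically developed for honest CohFTs with a nondegenerate metric, whereas here the metric $\eta^\tau$ degenerates in the $\tau\to 0$ limit — I would circumvent this by working with the rescaled semisimple CohFT \eqref{rescaled classes} for generic $\sigma,\tau$, invoking \cite{BPS12} to get the hierarchy, and then carefully taking the homogeneous limit, controlling that the Hamiltonian densities stay polynomial even as the symplectic structure collapses. Second, the precise dispersive (i.e. $\eps$-dependent, $w^1\neq0$) form of $Q^2_{2,0}$ beyond its leading term is not asserted here and is deferred, but I must still prove $Q^2_{2,0}\in\hcA^{\ev,[0]}_{w^1,w^2}$ with the stated $\odeg$; the honest check that only even powers of $\eps$ survive is where the parity of $\oM_{g,n}$-cohomology and the $\eps^{2g}$ weighting must be combined with care.
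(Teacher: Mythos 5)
Your proposal follows essentially the same route as the paper's proof: polynomiality of the flows is obtained by passing to the rescaled semisimple CohFT~\eqref{rescaled classes} for generic $\sigma,\tau$, invoking \cite{BPS12}, and then specializing $\sigma=\tau=0$; the homogeneity $\odeg Q^2_{\beta,d}=2d+\beta$ comes from the degree formula~\eqref{eq:degree of the extended class}; the $Q^1$-flows are identified with the $\lambda_g$ Hodge-integral hierarchy; and the dispersionless data are read off from the genus-$0$ vector potential and the topological recursion relation of \cite{BCT17}. The one slight misattribution is that $Q^1_{2,d}=0$ follows not from a dimension/degree constraint but from the identical vanishing $c^{2,\ext}_{g,n+1}(e^1\otimes\cdots)=0$ whenever some insertion equals $e_2$ (Ramond vanishing combined with part 4 of Theorem~\ref{theorem:main properties of the extended theory}), which gives $\d F^1/\d t^2_d=0$ directly.
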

\begin{proof}
We begin with the following lemma.

\begin{lemma}
The partial cohomological field theory $C^{2;\sigma,\tau}_{g,n}$ is semisimple at the origin.
\end{lemma}
\begin{proof}
Our partial CohFT defines the algebra structure on the tangent space~$T_0 V^\ext$ with the multiplication, given by $e_1\cdot e_i=e_i$ and $e_2\cdot e_2=\tau e_1$, (see part 4 of Proposition~\ref{proposition:Grt-class, properties}). This algebra, considered as an algebra over the algebraic closure of the field of fractions of $\mbC[[\sigma,\tau]]$, is clearly semisimple.
\end{proof}

Introduce formal variables $\tw^1$, $\tw^2$ and let
\begin{align*}
F^{\sigma,\tau}(t^*_*,\eps):=&\sum_{g,n\ge 0}\frac{\eps^{2g}}{n!}\sum_{\substack{1\le\alpha_1,\ldots,\alpha_n\le 2\\d_1,\ldots,d_n\ge 0}}\left(\int_{\oM_{g,n}}C^{2;\sigma,\tau}_{g,n}(\otimes_{i=1}^n e_{\alpha_i})\prod_{i=1}^{n}\psi_{i}^{d_i}\right)\prod_{i=1}^n t^{\alpha_i}_{d_i}\in\mbC[[t^*_*,\eps,\sigma,\tau]],\\
\tw^\alpha(t^*_*,\eps,\sigma,\tau):=&(\eta^\tau)^{\alpha\mu}\frac{\d^2 F^{\sigma,\tau}}{\d t^\mu_0\d t^1_0}\in\mbC[[t^*_*,\eps,\sigma,\tau]],\quad \alpha=1,2,
\end{align*}
and $\tw^\alpha_n(t^*_*,\eps,\sigma,\tau):=\d_x^n\tw^\alpha(t^*_*,\eps,\sigma,\tau)$. The string equation for $F^{\sigma,\tau}$,
$$
\frac{\d F^{\sigma,\tau}}{\d t^1_0}=\sum_{n\ge 0}t^\gamma_{n+1}\frac{\d F^{\sigma,\tau}}{\d t^\gamma_n}+\frac{(t^1_0)^2}{2}+\tau\frac{(t^2_0)^2}{2}+C\eps^2,\quad C\in\mbC,
$$
implies that
$$
\tw^\alpha_n(t^*_*,\eps,\sigma,\tau)=t^\alpha_n+\delta^{\alpha,1}\delta_{n,1}+O((t^*_*)^2)+O(\eps^2),
$$
where by $O((t^*_*)^2)$ we denote a formal power series, which doesn't contain linear and constant terms in the variables~$t^\gamma_q$. Therefore, there exists a unique formal power series $\tQ^\alpha_{\beta,d}\in\mbC[[(\tw^\gamma_q-\delta^{\gamma,1}\delta_{q,1}),\eps^2,\sigma,\tau]]$ such that $\left.\tQ^\alpha_{\beta,d}\right|_{\tw^\gamma_q=\tw^\gamma_q(t^*_*,\eps,\sigma,\tau)}=(\eta^\tau)^{\alpha\mu}\frac{\d^2 F^{\sigma,\tau}}{\d t^\mu_0\d t^\beta_d}$. Then the power series~$\tw^\alpha(t^*_*,\eps,\sigma,\tau)$ satisfy the system of PDEs 
$$
\frac{\d\tw^\alpha}{\d t^\beta_d}=\d_x\tQ^\alpha_{\beta,d},\quad 1\le\alpha,\beta\le 2,\quad d\ge 0.
$$

On the other hand, we can consider the rescaled classes~\eqref{rescaled classes}, which form a cohomological field theory. By the previous lemma, this cohomological field theory is semisimple at the origin and, therefore, by the result of~\cite{BPS12}, there exists the corresponding Dubrovin-Zhang hierarchy (also called the hierarchy of topological type). This means that after the rescaling $\eps\mapsto\eps\sqrt{\frac{1-(1-\sigma\tau)^{-2}}{\sigma}}^{-1}$ the functions $\tQ^\alpha_{\beta,d}$ become differential polynomials of differential degree $0$. This implies that
$$
\tQ^\alpha_{\beta,d}\in\hcA^{\ev,[0]}_{\tw^1,\tw^2}[[\sigma,\tau]].
$$

Let 
$$
Q^\alpha_{\beta,d}:=\left.\tQ^\alpha_{\beta,d}\right|_{\substack{\sigma=\tau=0\\\tw^\gamma_n=w^\gamma_n}}\in\hcA^{\ev,[0]}_{w^1,w^2}.
$$
Since $\left.\tw^\alpha(t^*_*,\eps,\sigma,\tau)\right|_{\sigma=\tau=0}=w^\alpha(t^*_*,\eps)$, we see that the series $w^\alpha(t^*_*,\eps)$ satisfy the system of PDEs $\frac{\d w^\alpha}{\d t^\beta_d}=\d_x Q^\alpha_{\beta,d}$. Clearly, we have $\left.Q^\alpha_{\beta,d}\right|_{w^\gamma_n=w^\gamma_n(t^*_*,\eps)}=\frac{\d F^\alpha}{\d t^\beta_d}$. 

Recall that
\begin{gather}\label{eq:extended and lambda_g}
c^{2,\ext}_{g,n+1}(e^1\otimes\otimes_{i=1}^n e_{\alpha_i})=
\begin{cases}
\lambda_g,&\text{if $\alpha_1=\alpha_2=\ldots=\alpha_n=1$},\\
0,&\text{if $\alpha_i=2$, for some $i$}.
\end{cases}
\end{gather}
Therefore, $w^1(t^*_*,\eps)=\sum_{g,n\ge 0}\frac{\eps^{2g}}{n!}\sum_{d_1,\ldots,d_n\ge 0}\left(\int_{\oM_{g,n+2}}\lambda_g\prod_{i=1}^{n}\psi_{i+2}^{d_i}\right)\prod_{i=1}^n t^1_{d_i}$. In~\cite[Sections~5.2,~5.3]{Bur15} the first author noticed that the power series $w^1(t^*_*,\eps)$ satisfies the system of PDEs
$$
\frac{\d w^1}{\d t^1_d}=\frac{1}{(d+1)!}\d_x L\left((L^{-1}w^1)^{d+1}\right),\quad d\ge 0,
$$
and, obviously, $\frac{\d w^1(t^*_*,\eps)}{\d t^2_d}=0$. Since $w^\gamma_q(t^*_*,\eps)=t^\gamma_q+\delta^{\gamma,1}\delta_{q,1}+O((t^*_*)^2)+O(\eps^2)$, any function of the variables $t^\gamma_q$, $\eps$ can be expressed as a function of the variables $w^\gamma_q(t^*_*,\eps)$ and $\eps$ in a unique way. Therefore, $Q^1_{1,d}=\frac{1}{(d+1)!}L\left((L^{-1}w^1)^{d+1}\right)$ and $Q^1_{2,d}=0$.

Obviously, $Q^\alpha_{1,0}=w^\alpha$. Using~\eqref{eq:vector potential for extended 2-spin}, we also compute $\left.Q^2_{2,0}\right|_{\eps=0}=\left.\frac{\d\mcF^2}{\d v^2}\right|_{v^\gamma=w^\gamma}=-\frac{1}{4}(w^2)^2+w^1$.

Formula~\eqref{eq:degree of the extended class} implies that
$$
O F^\alpha=\frac{5-\alpha}{2}F^\alpha,\quad\alpha=1,2,\quad\text{where}\quad O:=\eps\frac{\d}{\d\eps}+\sum_{d\ge 0}\left(\frac{3-\mu}{2}-d\right)t^\mu_d\frac{\d}{\d t^\mu_d}.
$$
Therefore, $O w^\alpha_n(t^*_*,\eps)=\left(\frac{3-\alpha}{2}-n\right)w^\alpha_n(t^*_*,\eps)$, which gives the following expression for the operator $O$ in the variables $w^\alpha_n$:
$$
O=\eps\frac{\d}{\d\eps}+\sum_{n\ge 0}\left(\frac{3-\mu}{2}-n\right)w^\mu_n\frac{\d}{\d w^\mu_n}.
$$
We further compute $O\frac{\d F^2}{\d t^\beta_d}=\left(\frac{\beta}{2}+d\right)\frac{\d F^2}{\d t^\beta_d}$, which implies that
\begin{align*}
&\left(\eps\frac{\d}{\d\eps}+\sum_{n\ge 0}\left(\frac{3-\mu}{2}-n\right)w^\mu_n\frac{\d}{\d w^\mu_n}\right)Q^2_{\beta,d}=\left(\frac{\beta}{2}+d\right)Q^2_{\beta,d}\stackrel{Q^2_{\beta,d}\in\hcA^{[0]}_{w^1,w^2}}{\Rightarrow}\\
\Rightarrow&\left(\sum_{n\ge 0}\frac{3-\mu}{2}w^\mu_n\frac{\d}{\d w^\mu_n}\right)Q^2_{\beta,d}=\left(\frac{\beta}{2}+d\right)Q^2_{\beta,d}.
\end{align*}
The last equation gives the homogeneity condition $\odeg Q^2_{\beta,d}=2d+\beta$.

It remains to compute $\left.Q^2_{\beta,d}\right|_{\substack{\eps=0\\w^1=0}}=\left.\frac{\d F_0^2}{\d t^\beta_d}\right|_{\substack{t^1_*=t^2_{\ge 1}=0\\t^2_0=w^2}}$. This can be easily proved to be equal to $\frac{(w^2)^{2d+\beta}}{(-2)^{d+\beta-1}(2d+\beta)!!}$ by induction, using the topological recursion relation $\frac{\d^2 F^\alpha_0}{\d t^\beta_d\d t^2_0}=\frac{\d^2 F^\alpha_0}{\d t^2_0\d t^\mu_0}\frac{\d F^\mu_0}{\d t^\beta_{d-1}}$ \cite[Lemma 3.6]{BCT17} and formula~\eqref{eq:vector potential for extended 2-spin}. 
\end{proof}

System~\eqref{eq:hierarchy of topological type for extended} will be called the {\it hierarchy of topological type} for the extended $2$-spin theory.

\subsubsection{Step 2: analysis of the commutativity of the flows $\frac{\d}{\d t^2_0}$ and $\frac{\d}{\d t^1_1}$}\label{subsubsection:step 2}

Denote by $\odeg_{w^1}$ and~$\odeg_{w^2}$ the gradings with respect to the variables $w^1_n$ and $w^2_n$, respectively. Since $\odeg Q^2_{\beta,d}=2d+\beta$, we can decompose
$$
Q^2_{\beta,d}=\sum_{k=0}^{d+\left[\frac{\beta}{2}\right]}Q^2_{\beta,d,k},\quad \odeg_{w^1}Q^2_{\beta,d,k}=2k, \quad \odeg_{w^2}Q^2_{\beta,d,k}=2d+\beta-2k. 
$$
In particular, the differential polynomial $Q^2_{2,0}$ has the form 
$$
Q^2_{2,0}(w^1,w^2,\eps)=Q^2_{2,0,0}(w^2,\eps)+Xw^1,
$$
where $X$ is a differential operator of the form 
$$
X=1+\sum_{g\ge 1}X_g(\eps\d_x)^{2g},\quad X_g\in\mbC.
$$

Consider the flows $\frac{\d}{\d t^2_0}$ and $\frac{\d}{\d t^1_1}$ of the hierarchy of topological type~\eqref{eq:hierarchy of topological type for extended}:
\begin{align*}
&\frac{\d w^1}{\d t^2_0}=0, && \frac{\d w^1}{\d t^1_1}=\frac{1}{2}\d_x L\left((L^{-1}w^1)^2\right),\\
&\frac{\d w^2}{\d t^2_0}=\d_x\left(Q^2_{2,0,0}+X w^1\right), && \frac{\d w^2}{\d t^1_1}=\d_x\left(Q^2_{1,1,0}+Q^2_{1,1,1}\right).
\end{align*}
Introduce a formal variable~$w$ with degree $\odeg w_n:=1$. After the Miura transformation 
$$
w^1(u,w,\eps)=Lu,\qquad w^2(u,w,\eps)=XLw
$$
the flows $\frac{\d}{\d t^2_0}$ and $\frac{\d}{\d t^1_1}$ have the following form:
\begin{align}
&\frac{\d u}{\d t^2_0}=0, && \frac{\d u}{\d t^1_1}=u u_x,\label{eq:general system,1}\\
&\frac{\d w}{\d t^2_0}=\d_x Q(w,\eps)+u_x, && \frac{\d w}{\d t^1_1}=\d_x(C(w,\eps)+B(u,w,\eps)),\label{eq:general system,2}
\end{align}
where 
\begin{align}
&Q,C\in\hcA^{\ev,[0]}_w,&& B\in\hcA^{\ev,[0]}_{u,w},\label{eq:general property,1}\\
&Q=-\frac{1}{4}w^2+O(\eps^2),&& C=-\frac{1}{6}w^3+O(\eps^2),&& B=uw+O(\eps^2),\label{eq:general property,2}\\
&\odeg Q=2, && \odeg C=\odeg B=3.\label{eq:general property,3}
\end{align}

\begin{proposition}\label{proposition:main proposition about compatibility}
Consider arbitrary differential polynomials $Q,C\in\hcA^{\ev,[0]}_w$ and $B\in\hcA^{\ev,[0]}_{u,w}$, satisfying properties~\eqref{eq:general property,2},~\eqref{eq:general property,3}. Then the flows $\frac{\d}{\d t^2_0}$ and $\frac{\d}{\d t^1_1}$, given by~\eqref{eq:general system,1},~\eqref{eq:general system,2}, commute if and only if
\begin{align}
Q=&\left.-\frac{1}{4}(Rw)^2\right|_{\eps\mapsto\theta\eps},\label{eq:formula for Q}\\
C=&\left.\left[-\frac{1}{8}w(Rw)^2-\frac{1}{24}R^{-1}\left((Rw)^3\right)-\frac{\eps^2}{32}Rw\cdot \d_x^2R((Rw)^2)\right]\right|_{\eps\mapsto\theta\eps},\label{eq:formula for C}\\
B=&\left.\left[\frac{1}{2}uw+\frac{1}{2}R^{-1}\left(u\cdot Rw\right)+\frac{\eps^2}{8}\d_x^2 Ru\cdot Rw\right]\right|_{\eps\mapsto\theta\eps},\label{eq:formula for B}
\end{align}
for some complex parameter $\theta$.
\end{proposition}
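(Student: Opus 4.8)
The plan is to turn the commutativity of the two flows into identities in $\hcA_{u,w}$ and then to solve them. Since $\frac{\d u}{\d t^2_0}=0$, the $u$-components commute automatically ($\frac{\d}{\d t^2_0}(uu_x)=0$), so the only condition is commutativity of the $w$-components. Writing out $\frac{\d}{\d t^2_0}\frac{\d w}{\d t^1_1}=\frac{\d}{\d t^1_1}\frac{\d w}{\d t^2_0}$ with the chain rule and using $\frac{\d u}{\d t^2_0}=0$, $\frac{\d u}{\d t^1_1}=uu_x$, one obtains an equality of two $\d_x$-derivatives; removing the common factor $\d_x$ (the integration constant vanishing since every term has positive $\odeg$) gives a single relation. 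For a differential polynomial $P$ write $P^w_\ast:=\sum_{k\ge0}\frac{\d P}{\d w_k}\d_x^k$ for its linearization in $w$; the relation reads $(C+B)^w_\ast[\d_x Q+u_x]=Q^w_\ast[\d_x(C+B)]+uu_x$. Decomposing by the degree $\odeg_u$ in the variables $u_k$ (here $Q,C$ are $u$-free and $B$ is linear in $u$) splits it into three homogeneous equations:
\[
C^w_\ast[\d_x Q]=Q^w_\ast[\d_x C],\qquad C^w_\ast[u_x]+B^w_\ast[\d_x Q]=Q^w_\ast[\d_x B],\qquad B^w_\ast[u_x]=uu_x.
\]

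Next I would solve these in turn. The first is exactly the condition that the scalar flows $w_s=\d_x Q$ and $w_t=\d_x C$ in the single variable $w$ commute; the guiding device here is the change of dependent variable $\phi:=Rw$, under which $w_s=-\tfrac14\d_x(Rw)^2$ becomes the discrete-KdV-type flow $\phi_s=-\tfrac14\d_x R(\phi^2)$, which both explains the shape of the stated formulas and identifies the first equation with commutativity already present inside the discrete KdV hierarchy. The third equation constrains only the $w$-derivatives of $B$ and by itself leaves freedom, which is then removed by the second equation coupling $B$ to $Q$ and $C$. To verify that the explicit formulas solve all three (the ``if'' direction), I would prove a short list of bilinear identities for $R=\frac{2}{i\eps\d_x}\frac{\Lambda-1}{\Lambda+1}$, obtained by treating $R$ as a function of $\Lambda=e^{i\eps\d_x}$; for example the third equation is equivalent to $R^{-1}(u\cdot Ru_x)+\frac{\eps^2}{4}(\d_x^2 Ru)(Ru_x)=uu_x$, and analogous identities dispatch the first two.

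For the ``only if'' direction I would argue by uniqueness, order by order in $\eps^2$. Because $\odeg\eps=0$ while $\odeg Q=2$ and $\odeg C=\odeg B=3$, at each order $\eps^{2g}$ only finitely many monomials are admissible, and the three equations become a linear system for the order-$\eps^{2g}$ coefficients with all lower-order data already fixed. At order $\eps^0$ the prescribed leading terms $Q=-\tfrac14 w^2$, $C=-\tfrac16 w^3$, $B=uw$ satisfy the equations, and I would show that beyond order $\eps^2$ the linear system is uniquely solvable, so that the only genuine freedom is a single parameter entering at order $\eps^2$ (namely $\theta^2$, visible already in $Q=-\tfrac14 w^2-\tfrac{\theta^2\eps^2}{24}ww_2+\cdots$). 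This freedom is precisely the rescaling $\eps\mapsto\theta\eps$: since all three equations hold identically in $\eps$, they are preserved by $\eps\mapsto\theta\eps$, so the $\theta$-rescaled explicit triple is again admissible with the same leading terms and $\odeg$. Hence every commuting triple coincides with the explicit one for a unique $\theta$ (the sign of $\theta$ being immaterial, as everything is even in $\eps$), which with the ``if'' direction gives the equivalence.

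The main obstacle is concentrated in the algebra of the operator $R$, and is twofold. First, one must produce and prove the handful of bilinear $R$-identities that make the explicit $Q,C,B$ satisfy the three equations; these are the identities through which the specific operator $R=\frac{2}{i\eps\d_x}\frac{\Lambda-1}{\Lambda+1}$ must emerge rather than be guessed, and verifying them amounts to manipulating $R$, $R^{-1}$ and $\d_x$ as commuting functions of $\Lambda$. Second is the rigidity step: showing that past order $\eps^2$ the commutativity equations leave no freedom, so that the solution space is genuinely one-dimensional and exhausted by the rescaling parameter $\theta$. I expect the $R$-identity bookkeeping in the second equation, which mixes $Q$, $C$ and $B$ and carries the heaviest dispersive tails, to be the most delicate point.
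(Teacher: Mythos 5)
Your setup is sound and coincides with the paper's: splitting the commutator of the two flows by degree in $u$ reproduces exactly the three conditions the paper calls $P_1=P_2=P_3=0$, your sample $R$-identity for the third equation is correct (it is the specialization $u=v$ of the paper's key identity $\frac{\Lambda-1}{\Lambda+1}\left(uv+\frac{\Lambda-1}{\Lambda+1}u\cdot\frac{\Lambda-1}{\Lambda+1}v\right)=\frac{\Lambda-1}{\Lambda+1}u\cdot v+u\cdot\frac{\Lambda-1}{\Lambda+1}v$), and order-by-order analysis in $\eps^2$ is the right frame for uniqueness. But the proposal stops exactly where the content of the proposition begins. The whole ``only if'' direction rests on your assertion that ``beyond order $\eps^2$ the linear system is uniquely solvable'' and that the kernel at order $\eps^2$ is one-dimensional; you give no argument for either and yourself label this the main obstacle. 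This is not a finite check: it must be proved uniformly in $g$, and it is not automatic --- a nontrivial kernel at some order $\eps^{2g}$ would produce commuting deformations not accounted for by $\eps\mapsto\theta\eps$, and nothing in your outline rules this out.

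The paper closes this gap with two ingredients absent from your plan. First, since $\odeg C=3$, Euler's identity gives $C=\frac13 C_*w$; substituting the expression for $C_*u$ supplied by the $u$-linear equation into $\left(\frac13 C_*w\right)_*u-C_*u=0$ eliminates $C$ entirely and produces a single closed equation for $Q$ and the antisymmetric bilinear tail $\tB$ of $B$. Second, extracting the coefficients of $ww_{2g}u$, $w_iw_ju$ and $ww_iu_j$ from that equation expresses every unknown $q^g_{i,j}$, $\tb^g_{i,j}$ at order $\eps^{2g}$ as an explicit multiple of the single coefficient $q^g_{0,2g}$, and the coefficient of $w_xw_{2g-2}u_x$ then equals $-\frac{(g+1)(4g^2-7g+2)}{3}q^g_{0,2g}$; since $4g^2-7g+2$ has no integer roots, $q^g_{0,2g}=0$ for all $g\ge 2$, while at $g=1$ the system is consistent with a one-dimensional solution space. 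Without this (or an equivalent) computation the classification up to $\eps\mapsto\theta\eps$ is unproved. Your ``if'' direction is the paper's verification in outline and is fine, except that identifying the $u$-free equation with commutativity ``already present inside the discrete KdV hierarchy'' is not available: the flow $w_t=\d_xC$ corresponds to the \emph{new} flow $\DK_{1,1}$ restricted to $u=0$, whose very existence is deduced from this proposition, so that equation too must be checked by direct manipulation of $R$.
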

\begin{proof}
Denote by $D_{t^\alpha_d}$ the operator of derivation along the flow $\frac{\d}{\d t^\alpha_d}$, $(\alpha,d)=(2,0),(1,1)$, of system~\eqref{eq:general system,1},~\eqref{eq:general system,2}. We see that the difference $D_{t^2_0}D_{t^1_1}w-D_{t^1_1}D_{t^2_0}w$ has the following form:
$$
D_{t^2_0}D_{t^1_1}w-D_{t^1_1}D_{t^2_0}w=P_1(w,\eps)+P_2(u,w,\eps)+P_3(u,\eps),
$$
where $\odeg_w P_1=4$, $\odeg_w P_2=2$ and $\odeg_w P_3=0$. Therefore, the flows $\frac{\d}{\d t^2_0}$ and $\frac{\d}{\d t^1_1}$ commute if and only if $P_1=P_2=P_3=0$. Let us write these equations explicitly.

For $P_3$ we have
$$
P_3=\d_x\left[B(u,u_x,\eps)-uu_x\right],
$$
which is equal to zero if and only if
\begin{align}
&B=uw+\tB(u_x,w,\eps),\quad\text{where}\label{eq:form of B,1}\\
&\tB(u,w,\eps)=\sum_{g\ge 1}\eps^{2g}\sum_{i+j=2g-1}\tb^g_{i,j}u_iw_j,\quad\tb^g_{i,j}\in\mbC,\quad\text{and}\label{eq:form of B,2}\\
&\tB(u,w,\eps)=-\tB(w,u,\eps).\label{eq:form of B,3}
\end{align}
We assume now that these equations are satisfied.
 
Let us now compute the differential polynomial $P_2$. For a differential polynomial $f\in\hcA_w$ we use the following standard notation:
$$
f_*:=\sum_{n\ge 0}\frac{\d f}{\d w_n}\d_x^n.
$$
Note that, since $\odeg C=3$, we have $C=\frac{1}{3}C_*w$. For $P_2$ we get the following expression:
\begin{align*}
P_2=&\d_x\left[C_*u_x+B(u,\d_x Q,\eps)-Q_*\d_x B(u,w,\eps)\right]=\\
=&\d_x\left[C_*u_x+\tB(u_x,\d_x Q,\eps)-Q_*\d_x\tB(u_x,w,\eps)-\sum_{n\ge 0}\sum_{i=1}^{n+1}{n+1\choose i}\frac{\d Q}{\d w_n}u_iw_{n+1-i}\right].
\end{align*}
Therefore, $P_2=0$ is and only if
\begin{gather}\label{eq:compatibility condition1}
C_*u=\sum_{n\ge 0}\sum_{i=1}^{n+1}{n+1\choose i}\frac{\d Q}{\d w_n}u_{i-1}w_{n+1-i}+Q_*\d_x\tB(u,w,\eps)-\tB(u,\d_x Q,\eps).
\end{gather}
Let us assume that this equation is satisfied.

For $P_1$ we then compute
\begin{align*}
P_1=&\d_x\left[C_*\d_x Q-Q_*\d_xC\right]=\d_x\left[C_*\d_x Q-\frac{1}{3}Q_*\d_x(C_*w)\right]=\\
=&\d_x\left[\sum_{n\ge 0}\sum_{i=1}^{n+1}{n+1\choose i}\frac{\d Q}{\d w_n}\d_x^iQ\cdot w_{n+1-i}+\frac{2}{3}Q_*\d_x\tB(\d_x Q,w,\eps)\right.\\
&\hspace{0.6cm}\left.-\frac{1}{3}Q_*\d_x\sum_{n\ge 0}\sum_{i=1}^{n+1}{n+1\choose i}\frac{\d Q}{\d w_n}w_{i-1}w_{n+1-i}\right].
\end{align*}
Therefore, $P_1=0$ if and only if
\begin{gather}\label{eq:compatibility condition2}
\sum_{\substack{n\ge 0\\1\le i\le n+1}}{n+1\choose i}\frac{\d Q}{\d w_n}\d_x^iQ\cdot w_{n+1-i}+\frac{2}{3}Q_*\d_x\tB(\d_xQ,w,\eps)-\frac{1}{3}Q_*\d_x\sum_{\substack{n\ge 0\\1\le i\le n+1}}{n+1\choose i}\frac{\d Q}{\d w_n}w_{i-1}w_{n+1-i}=0.
\end{gather}
As a result, $P_1=P_2=P_3=0$ if and only if the differential polynomial $B$ has the form~\eqref{eq:form of B,1}--\eqref{eq:form of B,3} and equations~\eqref{eq:compatibility condition1},~\eqref{eq:compatibility condition2} are satisfied.

Let us analyze formula~\eqref{eq:compatibility condition1}. Substituting it in the equation $\left(\frac{1}{3}C_*w\right)_*u-C_*u=0$, we get 
\begin{multline}\label{eq:main equation for Q and tB}
\frac{1}{3}\sum_{\substack{n,m\ge 0\\1\le i\le n+1}}\frac{\d^2 Q}{\d w_n\d w_m}{n+1\choose i}w_{i-1}w_{n+1-i}u_m+\sum_{\substack{n\ge 0\\1\le i\le n+1}}\left(\frac{1}{3}{n+2\choose i}-{n+1\choose i}\right)\frac{\d Q}{\d w_n}u_{i-1}w_{n+1-i}\\
-\frac{1}{3}\tB(w,(\d_xQ)_*u,\eps)+\frac{2}{3}\tB(u,\d_xQ,\eps)-Q_*\d_x\tB(u,w,\eps)=0.
\end{multline}
\begin{lemma}
Equation~\eqref{eq:main equation for Q and tB} determines differential polynomials $Q$ and $\tB$ of the form
\begin{align*}
Q=&-\frac{1}{4}w^2+\frac{1}{2}\sum_{g\ge 1}\eps^{2g}\sum_{i+j=2g}q^g_{i,j}w_iw_j,\quad q^g_{i,j}=q^g_{j,i},\\
\tB=&\sum_{g\ge 1}\eps^{2g}\sum_{i+j=2g-1}\tb^g_{i,j}u_iw_j,\quad \tb^g_{i,j}=-\tb^g_{j,i},
\end{align*}
uniquely up to the rescaling $\eps\mapsto\theta\eps$ for some complex number $\theta$.
\end{lemma}
\begin{proof}
Let $g\ge 1$. The coefficient of $\eps^{2g}$ on the left-hand side of equation~\eqref{eq:main equation for Q and tB} has the form
$$
M_1(u,w;q^g_{*,*},\tb^g_{*,*})+M_2(u,w;q^{\le g-1}_{*,*},\tb^{\le g-1}_{*,*}),
$$
where the coefficients of the differential polynomial $M_1$ depend on the coefficients $q^g_{i,j}$ and $\tb^g_{i,j}$ linearly. Therefore, in order to prove the lemma, it is enough to check that the linear system for the coefficients $q^g_{i,j}$ and $\tb^g_{i,j}$, given by the equation $M_1=0$, has one-dimensional space of solutions for $g=1$, and has only zero solution for $g\ge 2$.

We compute
\begin{align}
M_1=&\sum_{\substack{n+m=2g\\1\le i\le n+1}}q^g_{n,m}\left(\frac{1}{3}{n+1\choose i}w_{n+1-i}w_{i-1}u_m+\left(\frac{1}{3}{n+2\choose i}-{n+1\choose i}\right)w_m w_{n+1-i}u_{i-1}\right)\label{eq:main linear system of Q and tB}\\
&+\sum_{i+j=2g-1}\tb^g_{i,j}\left(\frac{1}{6}w_i\d_x^{j+1}(wu)-\frac{1}{6}u_i\d_x^{j+1}(w^2)+\frac{1}{2}w\d_x(u_iw_j)\right),\notag
\end{align}
and equate all the coefficients of this differential polynomial to zero. The coefficient of $ww_{2g}u$ is equal to $-\frac{2g}{3}q^g_{2g,0}+\frac{1}{3}\tb^g_{0,2g-1}$, which gives
\begin{gather}\label{eq:tb2g-1}
\tb^g_{0,2g-1}=2g\cdot q^g_{2g,0}.
\end{gather}
Taking the coefficient of $w_iw_ju$, $1\le i\le g$, $j=2g-i$, in~\eqref{eq:main linear system of Q and tB}, we get
$$
\frac{1}{6}q^g_{2g,0}{2g+2\choose i+1}-\frac{2g+1}{3}q^g_{i,j}+\frac{1}{12}(\tb_{j,i-1}^g-\tb^g_{j-1,i})-\frac{1}{6}\tb^g_{0,2g-1}{2g\choose i}=0.
$$
Taking the coefficient of $ww_iu_j$, $1\le i\le g$, $j=2g-i$, in~\eqref{eq:main linear system of Q and tB}, we obtain
$$
q^g_{0,2g}\left(\frac{1}{3}{2g+2\choose i+1}-{2g+1\choose i}\right)+q^g_{i,j}\frac{2g+1}{3}+\left(\frac{1}{3}\tb_{j-1,i}^g+\frac{1}{6}\tb_{j,i-1}^g\right)+\frac{1}{6}{2g\choose i}\tb^g_{0,2g-1}=0.
$$
Summing the last two equations, we get
$$
q_{0,2g}^g\left(\frac{1}{2}{2g+2\choose i+1}-{2g+1\choose i}\right)+\frac{1}{4}(\tb_{j,i-1}^g+\tb_{j-1,i}^g)=0,\quad 1\le i\le g,\quad j=2g-i,
$$
which allows to compute all the coefficients $\tb^g_{i,j}$ in terms of $q_{0,2g}^g$:
\begin{gather}\label{eq:tbij}
\tb_{i,j}^g=\left(2{2g\choose i+1}-2{2g\choose i}+(-1)^{i+1}(2g-2)\right)s^g_{0,2g},\quad 0\le i\le 2g-1,\quad j=2g-1-i.
\end{gather}
For the coefficients $q^g_{i,j}$ we then get 
\begin{gather}\label{eq:qij}
q^g_{i,j}=\frac{1}{2g+1}\left({2g\choose i-1}+{2g\choose i+1}-g{2g\choose i}+(-1)^{i+1}(g-1)\right)q^g_{0,2g},\quad 1\le i\le 2g-1,\quad j=2g-i.
\end{gather}

It is now easy to see that if $g=1$ and equations~\eqref{eq:tb2g-1},~\eqref{eq:tbij} and~\eqref{eq:qij} are satisfied, then $M_1=0$. Suppose $g\ge 2$. Using equations~\eqref{eq:tb2g-1},~\eqref{eq:tbij} and~\eqref{eq:qij}, we compute that the coefficient of $w_xw_{2g-2}u_x$ in $M_1$ is equal to 
$$
-\frac{(g+1)(4g^2-7g+2)}{3}q^g_{0,2g}.
$$
Since the quadratic polynomial $4g^2-7g+2$ doesn't have integer roots, we conclude that $q^g_{0,2g}=0$ and, hence, all the coefficients $q^g_{i,j}$ and $\tb^g_{i,j}$ are equal to zero. The lemma is proved.
\end{proof}

It remains to prove that for the differential polynomials $Q$, $C$ and $B$, given by formulas~\eqref{eq:formula for Q},~\eqref{eq:formula for C} and~\eqref{eq:formula for B}, the flows $\frac{\d}{\d t^2_0}$ and $\frac{\d}{\d t^1_1}$, given by system~\eqref{eq:general system,1},~\eqref{eq:general system,2}, commute. Without loss of generality we can assume that $\theta=1$. 

First, we have to check that the differential polynomial $B$ satisfies properties~\eqref{eq:form of B,1}--\eqref{eq:form of B,3}. We have
\begin{gather}\label{eq:formula for tB}
\tB(u_x,w,\eps)=B(u,w,\eps)-uw=-\frac{1}{2}uw+\frac{1}{2}R^{-1}\left(u\cdot Rw\right)+\frac{\eps^2}{8}\d_x^2 Ru\cdot Rw.
\end{gather}
In order to check the skew-symmetry of $\tB(u,w,\eps)$ we compute
\begin{gather}\label{eq:RtB}
R\tB(w_x,w_x,\eps)=-\frac{1}{4}\d_xR(w^2)+\frac{1}{2}\d_xRw\cdot w+\frac{1}{16}\eps^2\d_xR\left(\d_xRw\cdot\d_xRw\right).
\end{gather}

\begin{lemma}
We have the following identity in the ring $\hcA_{u,v}$:
\begin{gather}\label{eq:identiity with Lambda}
\frac{\Lambda-1}{\Lambda+1}\left(uv+\frac{\Lambda-1}{\Lambda+1}u\cdot\frac{\Lambda-1}{\Lambda+1}v\right)=\frac{\Lambda-1}{\Lambda+1}u\cdot v+u\cdot\frac{\Lambda-1}{\Lambda+1}v.
\end{gather}
\begin{proof}
Applying the operator $\Lambda+1$ to the both sides of~\eqref{eq:identiity with Lambda}, we come to the following equivalent identity:
\begin{align*}
&(\Lambda-1)\left(uv+\frac{\Lambda-1}{\Lambda+1}u\cdot\frac{\Lambda-1}{\Lambda+1}v\right)=(\Lambda+1)\left(\frac{\Lambda-1}{\Lambda+1}u\cdot v+u\cdot\frac{\Lambda-1}{\Lambda+1}v\right)\Leftrightarrow\\
\Leftrightarrow&\Lambda u\cdot\Lambda v+\Lambda\frac{\Lambda-1}{\Lambda+1}u\cdot \Lambda\frac{\Lambda-1}{\Lambda+1}v-uv-\frac{\Lambda-1}{\Lambda+1}u\cdot\frac{\Lambda-1}{\Lambda+1}v=\\
&\hspace{4cm}=\Lambda\frac{\Lambda-1}{\Lambda+1}u\cdot \Lambda v+\Lambda u\cdot \Lambda\frac{\Lambda-1}{\Lambda+1}v+\frac{\Lambda-1}{\Lambda+1}u\cdot v+u\cdot\frac{\Lambda-1}{\Lambda+1}v.
\end{align*}
Expressing $\frac{\Lambda-1}{\Lambda+1}u=u-\frac{2}{\Lambda+1}u$, we come to the following equation:
$$
\frac{2\Lambda}{\Lambda+1}u\cdot\frac{2\Lambda}{\Lambda+1}v-\left(2u-\frac{2}{\Lambda+1}u\right)\left(2v-\frac{2}{\Lambda+1}v\right)=0,
$$
which is clearly true.
\end{proof}
\end{lemma}

Recall that $R=\frac{2(\Lambda-1)}{i\eps\d_x(\Lambda+1)}$. Using identity~\eqref{eq:identiity with Lambda}, we then obtain $\d_xR\left(w^2-\frac{\eps^2}{4}\d_xRw\cdot\d_xRw\right)=2\d_xRw\cdot w$, which immediately implies that the right-hand side of~\eqref{eq:RtB} is equal to zero. Therefore, $\tB(u,w,\eps)=-\tB(w,u,\eps)$.

Then we need to check equations~\eqref{eq:compatibility condition1} and~\eqref{eq:compatibility condition2}. Let us begin with equation~\eqref{eq:compatibility condition1}. 
\begin{lemma}\label{lemma:identity for Q}
For $Q=-\frac{1}{4}(Rw)^2$ we have
\begin{gather}\label{first term for condition1}
\sum_{n\ge 0}\sum_{i=1}^{n+1}{n+1\choose i}\frac{\d Q}{\d w_n}u_{i-1}w_{n+1-i}=-\frac{1}{2}Rw\cdot\left(Ru\cdot w+\frac{\eps^2}{4}\d_xR(Ru\cdot\d_xRw)\right).
\end{gather}
\end{lemma}
\begin{proof}
We compute
\begin{align*}
\sum_{n\ge 0}\sum_{i=1}^{n+1}{n+1\choose i}\frac{\d Q}{\d w_n}u_iw_{n+1-i}=&Q_*\d_x(uw)-u\cdot\d_x Q=-\frac{1}{2}Rw\cdot\left(\d_xR(uw)-u\cdot\d_xRw\right)\stackrel{\text{by eq.\eqref{eq:identiity with Lambda}}}{=}\\
=&-\frac{1}{2}Rw\cdot\left(\d_xRu\cdot w+\frac{\eps^2}{4}\d_xR(\d_xRu\cdot\d_xRw)\right),
\end{align*}
which proves the lemma.
\end{proof}
The last lemma allows to compute the first term on the right-hand side of~\eqref{eq:compatibility condition1}. We compute the second term in the following way:
\begin{align}
Q_*\d_x\tB(u,w,\eps)=&Q_*\left(\tB(u_x,w,\eps)-\tB(w_x,u,\eps)\right)\stackrel{\text{by eq.\eqref{eq:formula for tB}}}{=}\notag\\
=&-\frac{1}{2}Rw\cdot\left(\frac{1}{2}u\cdot Rw+\frac{\eps^2}{8}R\left(\d_x^2Ru\cdot Rw\right)-\frac{1}{2}Ru\cdot w-\frac{\eps^2}{8}R\left(Ru\cdot\d_x^2Rw\right)\right).\label{second term for condition1}
\end{align}
By~\eqref{eq:formula for tB}, the third term on the right-hand side of~\eqref{eq:compatibility condition1} is equal to
\begin{gather}\label{third term for condition1}
-\frac{1}{4}\left[-\frac{1}{2}u\cdot(Rw)^2+\frac{1}{2}R^{-1}\left(Ru\cdot(Rw)^2\right)+\frac{\eps^2}{8}Ru\cdot\d_x^2R\left((Rw)^2\right)\right].
\end{gather}
On the other hand, we have
$$
C_*u=-\frac{1}{8}u\cdot(Rw)^2-\frac{1}{4}w\cdot Ru\cdot Rw-\frac{1}{8}R^{-1}\left(Ru\cdot(Rw)^2\right)-\frac{\eps^2}{32}Ru\cdot\d_x^2R\left((Rw)^2\right)-\frac{\eps^2}{16}Rw\cdot\d_x^2R(Ru\cdot Rw),
$$
which is equal to the sum of expressions~\eqref{first term for condition1}, \eqref{second term for condition1} and~\eqref{third term for condition1}.

Let us now check equation~\eqref{eq:compatibility condition2}. Using Lemma~\ref{lemma:identity for Q} and formula~\eqref{eq:formula for tB}, we compute the left-hand side of~\eqref{eq:compatibility condition2}:
\begin{align*}
&-\frac{1}{2}Rw\cdot\left[\d_xR(w\cdot Q)-\d_xRw\cdot Q\right]\\
&-\frac{1}{3}Rw\cdot \d_xR\left[-\frac{1}{2}w\cdot Q+\frac{1}{2}R^{-1}(Rw\cdot Q)+\frac{\eps^2}{8}Rw\cdot\d_x^2R Q\right]\\
&+\frac{1}{6}Rw\cdot\d_xR\left[-\frac{1}{2}Rw\cdot\left(w\cdot Rw+\frac{\eps^2}{4}\d_xR(\d_xRw\cdot Rw)\right)\right],
\end{align*}
which can be easily seen to be equal to zero. This completes the proof of the proposition.
\end{proof}

We see that in order to determine the differential polynomials $Q^2_{2,0}$ and $Q^2_{1,1}$ it remains to compute the operator $X$ and the parameter $\theta$. This will be done in the next section.

\subsubsection{Step 3: computation of the operator $X$ and the parameter $\theta$}\label{subsubsection:step 3}

For a differential operator~$K$ of the form
$$
K=1+\sum_{g\ge 1}K_g(\eps\d_x)^{2g},\quad K_g\in\mbC,
$$
denote
$$
\widehat{K}(z):=1+\sum_{g\ge 1}K_gz^{2g}.
$$
Introduce the constants $I_{1,g}$, $I_{2,g}$, $g\ge 0$, and the corresponding generating series $I_1(z)$, $I_2(z)$ by
\begin{align*}
&I_{1,g}:=\int_{\oM_{g,3}}c^{2,\ext}_{g,3}(e^2\otimes e_2\otimes e_1)\psi_3^{2g}, && I_1(z):=1+\sum_{g\ge 1}I_{1,g}z^{2g},\\
&I_{2,g}:=\int_{\oM_{g,3}}c^{2,\ext}_{g,3}(e^2\otimes e_2\otimes e_1)\psi_2^{2g}, && I_2(z):=1+\sum_{g\ge 1}I_{2,g}z^{2g}.
\end{align*}
In order to find power series $\hX(z)$, $I_1(z)$ and $I_2(z)$, we will find three relations between them. 

From the string equation for the functions $F^\alpha$,
$$
\frac{\d F^\alpha}{\d t^1_0}=\sum_{n\ge 0}t^\gamma_{n+1}\frac{\d F^\alpha}{\d t^\gamma_n}+t^\alpha_0,
$$
it follows that
\begin{align*}
w^\alpha_k(t^*_*,\eps)=&
\begin{cases}
\delta_{k,1}+\sum_{g\ge 0}\eps^{2g}L_g t^1_{2g+k}+O((t^*_*)^2),&\text{if $\alpha=1$},\\
\sum_{g\ge 0}\eps^{2g}I_{2,g}t^2_{2g+k}+O((t^*_*)^2),&\text{if $\alpha=2$},
\end{cases}\\
\frac{\d w^2(t^*_*,\eps)}{\d t^2_0}=&1+\sum_{g\ge 0}\eps^{2g}I_{1,g}t^1_{2g+1}+O((t^*_*)^2).
\end{align*}
Since the functions $w^\alpha(t^*_*,\eps)$ satisfy the equation $\frac{\d w^2}{\d t^2_0}=\d_x(Q^2_{2,0,0}(w^2,\eps)+Xw^1)$ and $\odeg Q^2_{2,0,0}=2$, we obtain
\begin{gather}\label{eq:first relation for hX}
I_1=\hX\cdot\hL.
\end{gather}
 
Using the dilaton equation for the functions $F^\alpha$,
$$
\frac{\d F^\alpha}{\d t^1_1}=\eps\frac{\d F^\alpha}{\d\eps}+\sum_{n\ge 0}t^\gamma_n\frac{\d F^\alpha}{\d t^\gamma_n}-F^\alpha,
$$
we get
$$
\frac{\d w^2(t^*_*,\eps)}{\d t^1_1}=\sum_{g\ge 0}\eps^{2g}(2g+1)I_{2,g}t^2_{2g}+O((t^*_*)^2).
$$
Since the functions $w^\alpha(t^*_*,\eps)$ satisfy the equation $\frac{\d w^2}{\d t^1_1}=\d_x(Q^2_{1,1,0}+Q^2_{1,1,1})$, we obtain
\begin{gather}\label{eq:tmp for the second relation for hX}
\sum_{g\ge 0}(2g+1)I_{2,g}z^{2g}=\left(\sum_{g\ge 0}z^{2g}\Coef_{\eps^{2g}w^1_xw^2_{2g}}\d_x Q^2_{1,1,1}\right)I_2(z).
\end{gather}
Denote
$$
R_\theta:=R|_{\eps\mapsto\theta\eps},\qquad T_\theta:=\sqrt{R_\theta}.
$$
Proposition~\ref{proposition:main proposition about compatibility} implies that
$$
Q^2_{1,1,1}=XL B\left(L^{-1}w^1,(XL)^{-1}w^2,\eps\right).
$$
Note that the differential polynomial $T_\theta B(u,T_\theta^{-1}w,\eps)$ has the form
\begin{align*}
T_\theta B(u,T_\theta^{-1}w,\eps)=&\frac{1}{2}T_\theta\left(u\cdot T_\theta^{-1}w\right)+\frac{1}{2}T_\theta^{-1}\left(u\cdot T_\theta w\right)+\frac{\theta^2\eps^2}{8}T_\theta\left(\d_x^2R_\theta u\cdot T_\theta w\right)=\\
=&uw+P(u,w,\eps),
\end{align*}
where $\frac{\d P}{\d u}=\frac{\d P}{\d u_x}=0$. From this we conclude that
\begin{align*}
\Coef_{\eps^{2g}w^1_xw^2_{2g}}\d_xQ^2_{1,1,1}=&\Coef_{\eps^{2g}w^1_xw^2_{2g}}\left[\d_x T_\theta^{-1}XL\left(w^1\cdot(XL)^{-1}T_\theta w^2\right)\right]=\\
=&\Coef_{z^{2g}}\left[\frac{d}{dz}\left(z\hT^{-1}_\theta\hX\hL\right)\left(\hX\hL\right)^{-1}\hT_\theta\right]\stackrel{\text{by eq.\eqref{eq:first relation for hX}}}{=}\\
=&\Coef_{z^{2g}}\left[\frac{d}{dz}\left(z\hT^{-1}_\theta I_1\right)I_1^{-1}\hT_\theta\right].
\end{align*}
Substituting this formula in~\eqref{eq:tmp for the second relation for hX}, we get $I_2+z\frac{d}{dz}I_2=\frac{d}{dz}\left(z\hT^{-1}_\theta I_1\right)I_1^{-1}\hT_\theta I_2$, which implies that $z\frac{d}{dz}\log\left(I_2\right)=z\frac{d}{dz}\log\left(\hT^{-1}_\theta I_1\right)$ and, hence,\begin{gather}\label{eq:second relation for hX}
I_2=\hT_\theta^{-1}\cdot I_1.
\end{gather}

In order to find a third relation between the power series $\hX$, $I_1$ and $I_2$, we will use a certain relation in the cohomology of the moduli space of curves, found in~\cite{LP11}. For decompositions $\{1,2,\ldots,n\}=I\sqcup J$ and $g=g_1+g_2$ denote
$$
\Delta^{g_1,g_2}_{I,J}:=\oM_{g_1,|I|+1}\times\oM_{g_2,|J|+1},
$$
where the marked points on curves from $\oM_{g_1,|I|+1}$ are labelled by the numbers from $I\cup\{n+1\}$; and the marked points on curves from $\oM_{g_2,|J|+1}$ are labelled by the numbers from $J\cup\{n+2\}$. Denote by 
$$
\iota_{I,J}^{g_1,g_2}\colon\Delta^{g_1,g_2}_{I,J}\to\oM_{g,n}
$$ 
the gluing map, which identifies the marked points labelled by~$n+1$ and~$n+2$. In~\cite[Theorem 0.1]{LP11} the authors proved the following relation in the cohomology of $\oM_{g,1}$:
$$
\psi_1^{2g}=\sum_{\substack{g_1+g_2=g\\g_1,g_2\ge 1}}\sum_{a+b=2g-1}(-1)^a\frac{g_2}{g}\left(\iota_{\{1\},\emptyset}^{g_1,g_2}\right)_*\left(\psi_2^a\psi_3^b\right)\in H^{4g}(\oM_{g,1},\mbQ),\quad g\ge 1.
$$ 
Pulling back this relation via the forgetful map $\oM_{g,n}\to\oM_{g,1}$, we obtain an analagous relation in the cohomology of $\oM_{g,n}$:
\begin{gather}\label{eq:main cohomological relation}
\psi_1^{2g}=\sum_{g_1+g_2=g}\sum_{\substack{I\sqcup J=\{1,\ldots,n\}\\1\in I\\2g_1+|I|-1>0\\2g_2+|J|-1>0}}\sum_{a+b=2g-1}(-1)^a\frac{g_2}{g}\left(\iota_{I,J}^{g_1,g_2}\right)_*\left(\psi_{n+1}^a\psi_{n+2}^b\right)\in H^{4g}(\oM_{g,n},\mbQ),\quad g,n\ge 1.
\end{gather}
Multiplying the both sides of this relation, for $n=3$, by $c^{2,\ext}_{g,3}(e_1\otimes e_2\otimes e^2)$ and integrating over~$\oM_{g,3}$ we get
\begin{align}
&z\frac{d}{dz}I_1=-I_1z\frac{d}{dz}\hL+z\frac{d}{dz}\left(I_2^2\right)-\left(\hL-1\right)z\frac{d}{dz}I_1\Rightarrow\notag\\
\Rightarrow & I_2^2=\hL\cdot I_1.\label{eq:third relation for hX}
\end{align}

Relations~\eqref{eq:first relation for hX},~\eqref{eq:second relation for hX} and~\eqref{eq:third relation for hX} imply that
\begin{align*}
I_1=&\frac{2(e^{i\theta z}-1)}{\theta(e^{iz/2}-e^{-iz/2})(e^{i\theta z}+1)},\\
I_2=&\frac{iz}{e^{iz/2}-e^{-iz/2}}\sqrt{\frac{2}{i\theta z}\frac{e^{i\theta z}-1}{e^{i\theta z}+1}}=1+\frac{1+\theta^2}{24}z^2+O(z^4),\\
\hX=&\frac{2}{i\theta z}\frac{e^{i\theta z}-1}{e^{i\theta z}+1}.
\end{align*}
In order to determine $\theta$, we compute
\begin{align*}
I_{2,1}=&\int_{\oM_{1,3}}c^{2,\ext}_{1,3}(e^2\otimes e_2\otimes e_1)\psi_2^2=\int_{\oM_{1,2}}c^{2,\ext}_{1,2}(e^2\otimes e_2)\psi_2=\\
=&\int_{\oM_{1,2}}c^{2,\ext}_{1,2}(e^2\otimes e_2)\left(\frac{1}{24}\iota^\circ_*(1)+\left(\iota^{0,1}_{\{1,2\},\emptyset}\right)_*(1)\right)\stackrel{\text{eq. \ref{eq:loop property for extended}}}{=}\frac{1}{12},
\end{align*}
where $\iota^\circ\colon\oM_{0,4}\to\oM_{1,2}$ is the gluing map. Therefore, $\theta^2=1$ and
$$
\hX=\frac{2}{iz}\frac{e^{iz}-1}{e^{iz}+1}.
$$

As a result of this and the previous sections, we obtain the following statement.

\begin{proposition}\label{proposition:flows (2,0) and (1,1)}
After the Miura transformation 
$$
u(w^1,w^2,\eps)=L^{-1}w^1,\qquad v(w^1,w^2,\eps)=L^{-1}w^2
$$
the flows $\frac{\d}{\d t^2_0}$ and $\frac{\d}{\d t^1_1}$ of the hierarchy of topological type~\eqref{eq:hierarchy of topological type for extended} are given by
\begin{align*}
\frac{\d u}{\d t^2_0}=&0,\\
\frac{\d v}{\d t^2_0}=&-\frac{1}{4}\d_xR\left(v^2-4u\right)
\end{align*}
and
\begin{align*}
\frac{\d u}{\d t^1_1}=&uu_x,\\
\frac{\d v}{\d t^1_1}=&\d_x\left[-\frac{v^3}{24}+\frac{1}{2}uv-\frac{1}{8}R\left(R^{-1}v\cdot(v^2-4u)\right)-\frac{\eps^2}{32}R\left(v\cdot\d_x^2 R(v^2-4u)\right)\right].
\end{align*}
\end{proposition}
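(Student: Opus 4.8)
The plan is to read off the statement by assembling the three preceding steps; the only real content is translating between the Miura frame of Proposition~\ref{proposition:main proposition about compatibility} and the frame in which the proposition is phrased. First I would recall from Proposition~\ref{proposition:main proposition about compatibility} that, after the Miura transformation $w^1=Lu$, $w^2=XLw$, the flows $\frac{\d}{\d t^2_0}$ and $\frac{\d}{\d t^1_1}$ of the hierarchy of topological type~\eqref{eq:hierarchy of topological type for extended} take the form~\eqref{eq:general system,1},~\eqref{eq:general system,2}, with $Q,C,B$ given by~\eqref{eq:formula for Q},~\eqref{eq:formula for C},~\eqref{eq:formula for B} in terms of a single scalar $\theta$. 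The computation of Section~\ref{subsubsection:step 3}---the three relations~\eqref{eq:first relation for hX},~\eqref{eq:second relation for hX},~\eqref{eq:third relation for hX} together with the evaluation $I_{2,1}=\tfrac{1}{12}$---gives $\theta^2=1$ and $\hX(z)=\frac{2}{iz}\frac{e^{iz}-1}{e^{iz}+1}$. Since $Q,C,B$ are even in $\eps$, the rescaling $\eps\mapsto\theta\eps$ with $\theta=\pm1$ is immaterial, so I simply take $\theta=1$.

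The key identification is $X=R$. Indeed $R=\frac{2}{i\eps\d_x}\frac{\Lambda-1}{\Lambda+1}$ with $\Lambda=e^{i\eps\d_x}$ has symbol $\hR(z)=\frac{2}{iz}\frac{e^{iz}-1}{e^{iz}+1}=\hX(z)$, and an operator of the form $1+\sum_{g\ge1}(\cdots)(\eps\d_x)^{2g}$ is determined by its symbol, so $X=R$. As $L$ and $X=R$ are both functions of $\eps\d_x$, they commute, and therefore the proposition's Miura transformation $u=L^{-1}w^1$, $v=L^{-1}w^2$ has the same first component as in Step~2 while $v=L^{-1}w^2=L^{-1}XLw=Xw=Rw$; equivalently $w=R^{-1}v$. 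Because $R$ is independent of the times, $\frac{\d v}{\d t^*_*}=R\frac{\d w}{\d t^*_*}$ for every flow, which is what lets me push the equations of Step~2 forward to the variable $v$.

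It then remains to substitute $Rw=v$ into~\eqref{eq:formula for Q}--\eqref{eq:formula for B} and simplify. For $\frac{\d}{\d t^2_0}$ the equation $\frac{\d u}{\d t^2_0}=0$ is immediate, and $\frac{\d v}{\d t^2_0}=R(\d_xQ+u_x)$ with $Q=-\tfrac14(Rw)^2=-\tfrac14 v^2$ and $R\d_x=\d_xR$ collapses to $-\tfrac14\d_xR(v^2)+\d_xRu=-\tfrac14\d_xR(v^2-4u)$. For $\frac{\d}{\d t^1_1}$ the equation $\frac{\d u}{\d t^1_1}=uu_x$ is immediate, while $\frac{\d v}{\d t^1_1}=\d_x\,R(C+B)$; inserting $Rw=v$, $w=R^{-1}v$ into~\eqref{eq:formula for C},~\eqref{eq:formula for B} and applying $R$ reproduces the $v^3$ and $uv$ monomials directly and reassembles the four remaining terms into $-\tfrac18 R\bigl(R^{-1}v\cdot(v^2-4u)\bigr)$ and $-\tfrac{\eps^2}{32}R\bigl(v\cdot\d_x^2R(v^2-4u)\bigr)$, which is precisely the bracket in the statement.

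The argument is entirely formal once $X=R$ has been recognised, so I do not anticipate a genuine obstacle at this stage: all of the substantive work---semisimplicity and existence of the Dubrovin--Zhang hierarchy, the commutativity analysis of Proposition~\ref{proposition:main proposition about compatibility}, and the use of the cohomological relation~\eqref{eq:main cohomological relation} to pin down $\hX$ and $\theta$---is already carried out in Steps~1--3. The one point needing care is the bookkeeping in the final substitution, where one must keep track of which factors sit inside $R$ and which inside $R^{-1}$ and use only that $R$ commutes with $\d_x$ (no further appeal to the identity~\eqref{eq:identiity with Lambda} is required here).
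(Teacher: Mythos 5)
Your proposal is correct and follows exactly the route the paper intends: the paper offers no separate argument for this proposition beyond the phrase ``as a result of this and the previous sections,'' and your assembly --- identifying $X=R$ from the computed symbol $\hX(z)=\hR(z)$, discarding the sign of $\theta$ because $Q$, $C$, $B$ are even in $\eps$, and substituting $v=Rw$, $w=R^{-1}v$ into formulas~\eqref{eq:formula for Q}--\eqref{eq:formula for B} --- is precisely the omitted bookkeeping, carried out correctly.
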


\subsubsection{Step 4: uniqueness of higher flows}\label{subsubsection:step 4}

The following lemma is crucial for reconstruction of higher flows of the hierarchy of topological type and the extended discrete KdV hierarchy.

\begin{lemma}\label{lemma:second uniqueness lemma}
Let $Q(w,\eps)=-\frac{w^2}{4}+O(\eps)\in\hcA^{[0]}_w$, $\odeg Q=2$ and $d\ge 3$. Let us also fix a non-zero complex constant $C$. Suppose that there exists a differential polynomial $P(u,w,\eps)\in\hcA^{[0]}_{u,w}$, satisfying 
$$
P|_{\substack{\eps=0\\u=0}}=C w^d,\qquad\odeg P=d,
$$
such that the flows $\frac{\d}{\d t}$ and $\frac{\d}{\d\tau}$, given by
\begin{align*}
&\frac{\d u}{\d t}=0, && \frac{\d u}{\d\tau}=\begin{cases}0,&\text{if $d$ is even},\\ \d_x \frac{u^{(d+1)/2}}{((d+1)/2)!},&\text{if $d$ is odd},\end{cases}\\
&\frac{\d w}{\d t}=\d_xQ+u_x,&&\frac{\d w}{\d\tau}=\d_xP,
\end{align*}
commute. Then such a differential polynomial $P$ is unique. 
\end{lemma}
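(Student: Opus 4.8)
The plan is to argue by difference. Suppose $P_1$ and $P_2$ both satisfy the hypotheses and set $\widetilde P:=P_1-P_2$. Writing $D_t,D_\tau$ for the two evolutionary derivations and using that $Q$ depends only on $w$, a direct computation gives, for the pair of flows attached to any $P$,
\[
D_tD_\tau w-D_\tau D_t w=\d_x\Big[\textstyle\sum_{n\ge0}\frac{\d P}{\d w_n}\d_x^n(\d_x Q+u_x)-Q_*\d_x P-D_\tau u\Big],
\]
while the $u$-components commute automatically. Hence commutativity is equivalent to the vanishing of the bracket; since that bracket is a differential polynomial of pure degree $\odeg=d+1\ge4$ it cannot be a nonzero constant, so it is zero. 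The term $D_\tau u$ is the same for $P_1$ and $P_2$, so subtracting the two identities yields the homogeneous linear equation
\[
\mathsf{A}[\widetilde P]:=\textstyle\sum_{n\ge0}\frac{\d \widetilde P}{\d w_n}\d_x^n(\d_x Q+u_x)-Q_*\d_x\widetilde P=0,
\]
together with $\widetilde P|_{\eps=0,\,u=0}=0$ and $\odeg\widetilde P=d$. It remains to show these force $\widetilde P=0$.

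I would do this by induction on the order in $\eps$. Expand $\widetilde P=\sum_{k\ge0}\eps^k\widetilde P_k$, where $\widetilde P_k\in\cA_{u,w}$ has differential degree $k$ and $\odeg\widetilde P_k=d$. Let $\mathsf{A}_0$ be the leading part of $\mathsf{A}$, obtained by setting $\eps=0$ and replacing $Q$ by $Q|_{\eps=0}=-\tfrac14 w^2$; concretely $\mathsf{A}_0[f]=\sum_n\frac{\d f}{\d w_n}\d_x^n(u_x-\tfrac12 ww_x)+\tfrac{w}{2}\d_x f$. Assuming $\widetilde P_0=\dots=\widetilde P_{k-1}=0$, the coefficient of $\eps^k$ in $\mathsf{A}[\widetilde P]=0$ is exactly $\mathsf{A}_0[\widetilde P_k]=0$, since every positive-$\eps$ contribution of $\mathsf{A}$ lands in order $\ge k+1$. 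Thus everything reduces to understanding $\ker\mathsf{A}_0$ on each graded piece.

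For the base case $k=0$, $\widetilde P_0=\widetilde P_0(u,w)$ is an honest polynomial and $\mathsf{A}_0[\widetilde P_0]=0$ collapses, after the $w_x$-terms cancel, to the first-order equation $\frac{\d \widetilde P_0}{\d w}=-\frac{w}{2}\frac{\d\widetilde P_0}{\d u}$, whose characteristics give $\widetilde P_0=\Phi(u-\tfrac14 w^2)$. Homogeneity ($\odeg=d$) forces $\Phi$ to be a monomial, so $\widetilde P_0=\kappa\,(u-\tfrac14 w^2)^{d/2}$ for $d$ even and $\widetilde P_0=0$ for $d$ odd; the normalization $\widetilde P_0|_{u=0}=0$ then gives $\kappa=0$, whence $\widetilde P_0=0$.

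The heart of the argument — and the step I expect to be the main obstacle — is the inductive step: showing that $\mathsf{A}_0$ is injective on differential polynomials of differential degree $k\ge1$. Conceptually, $\mathsf{A}_0[f]=0$ says precisely that the flow $w_\tau=\d_x f$, $u_\tau=0$ is a symmetry of the dispersionless system $w_t=-\tfrac12 ww_x+u_x$, $u_t=0$. This system is strictly hyperbolic: the $u_x$-coupling makes its two characteristic speeds $-w/2$ and $0$ distinct (with Riemann invariants $u-\tfrac14 w^2$ and $u$), and for such a system every local symmetry is necessarily of hydrodynamic type, i.e.\ $\d_x f$ depends only on $w,u,w_x,u_x$; but any $f$ of differential degree $k\ge1$ depends on a jet of order $\ge1$, so $\d_x f$ has order $\ge2$, a contradiction. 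I would make this rigidity fully elementary by examining $\mathsf{A}_0[f]=0$ coefficient by coefficient in the highest jet variables: if $m$ is the order of $f$, the coefficient of $u_{m+1}$ gives $\frac{\d f}{\d w_m}+\frac{w}{2}\frac{\d f}{\d u_m}=0$, and the remaining top-order relations — exactly as in the degree-$1$ case, where the coefficient of $w_x^2$ already forces the leading coefficient to vanish — let one strip off the top jet and descend, eventually killing $f$. Combining the base case with the inductive step, all $\widetilde P_k$ vanish, so $\widetilde P=0$ and $P_1=P_2$, which is the desired uniqueness.
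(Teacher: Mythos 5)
Your overall architecture is sound and genuinely different from the paper's: you linearize by taking the difference $\widetilde P=P_1-P_2$, expand in $\eps$, and reduce everything to the injectivity of the dispersionless linearized operator $\mathsf{A}_0[f]=\sum_{n\ge 0}\frac{\d f}{\d w_n}\d_x^n\left(u_x-\tfrac12 ww_x\right)+\tfrac{w}{2}\d_x f$ on each space $\cA^{[k]}_{u,w}$, $k\ge 1$ (the paper instead decomposes $P$ by its polynomial degree in the $u$-variables, shows that the commutator splits into correspondingly graded pieces $L_k$, and determines the components of $P$ one after another). Your reduction is correct and the base case $k=0$ is handled properly. The problem is that the entire content of the lemma now sits in the claim $\ker\mathsf{A}_0\cap\cA^{[k]}_{u,w}=0$ for $k\ge 1$, and neither of your two arguments for it is a proof. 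The appeal to strict hyperbolicity (``every local symmetry of a strictly hyperbolic system is of hydrodynamic type'') is an unproven theorem-level assertion which is essentially equivalent to what you are trying to show; it is not an off-the-shelf fact for symmetries polynomial in the jets, and the system is not even strictly hyperbolic at $w=0$ (the speeds $-w/2$ and $0$ coincide there), which is exactly where your formal-power-series coefficients are centred. The ``elementary'' version extracts only the single relation $\frac{\d f}{\d w_m}+\frac{w}{2}\frac{\d f}{\d u_m}=0$ from the coefficient of $u_{m+1}$; the coefficient of $w_{m+1}$ vanishes identically (the contributions $-\tfrac{w}{2}\frac{\d f}{\d w_m}$ and $+\tfrac{w}{2}\frac{\d f}{\d w_m}$ cancel), so it gives nothing, and the ``remaining top-order relations'' that are supposed to strip off the top jet are never identified. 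The $k=1$ computation, where the coefficient of $w_x^2$ kills the leading coefficient, does not extend mechanically: for higher $k$ one must control the coefficients of all mixed monomials in the top jets, and this is precisely the combinatorial heart of the paper's proof, which requires two separately crafted arguments (a lexicographic induction on $w$-monomials for the $u$-independent part, and a $u$-jet-weight argument for the $u$-dependent part).

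The gap is repairable within your framework. Split $f\in\ker\mathsf{A}_0\cap\cA^{[k]}_{u,w}$ by polynomial degree in the $u$-variables, $f=\sum_j f_j$; the degree-$j$ component of $\mathsf{A}_0[f]$ equals $\sum_{n}\frac{\d f_j}{\d w_n}\d_x^n\left(-\tfrac12 ww_x\right)+\tfrac{w}{2}\d_x f_j+\sum_{n}\frac{\d f_{j-1}}{\d w_n}u_{n+1}$. Hence $f_0$ is killed by the lexicographic argument of the paper's first uniqueness lemma applied to the scalar flow $\d_x(ww_x)$, and then, inducting upwards on $j$, each $f_j$ with $j\ge 1$ is killed because $\tfrac{w}{2}\d_x f_j$ contains terms of strictly higher $u$-jet weight than anything else in its equation. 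As written, however, the decisive step of your proof is missing.
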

\begin{proof}
We begin with the following lemma which is a close analog of Lemma~2.4 in~\cite{Bur15}.
\begin{lemma}\label{lemma:first uniqueness lemma}
Let $f(w,\eps)=\frac{w^2}{2}+O(\eps)\in\hcA^{[0]}_w$ and $q_0(w)\in\mbC[[w]]$. Suppose that there exists a differential polynomial $q(w,\eps)\in\hcA^{[0]}_w$, satisfying $q(w,\eps)=q_0(w)+O(\eps)$, such that the flows $\frac{\d w}{\d t}=\d_x f$ and $\frac{\d w}{\d\tau}=\d_x q$ commute. Then such a differential polynomial $q$ is unique.
\end{lemma}
\begin{proof}
The proof is very similar to the proof of Lemma~2.4 in~\cite{Bur15}. The commutativity of the flows $\frac{\d}{\d t}$ and $\frac{\d}{\d\tau}$ means that $(\d_xq)_*\d_xf-(\d_xf)_*\d_xq=0$.  We claim that, using this equation, we can reconstruct all the coefficients of the $\eps$-expansion of $q$ term by term. In order to prove that each coefficient is determined uniquely, it is enough to show that if $q_d\in\cA^{[d]}_w$, $d\ge 1$, and 
\begin{gather}\label{eq:uniqueness 1,tmp 1}
(\d_xq_d)_*(ww_x)-(ww_x)_*\d_xq_d=0,
\end{gather}
then $q_d=0$. 

The left-hand side of~\eqref{eq:uniqueness 1,tmp 1} is equal to $\d_x\left((q_d)_*(ww_x)-w\d_xq_d\right)$ and, therefore, $(q_d)_*(ww_x)-w\d_xq_d=0$. We decompose $q_d=\sum_{\lambda\in\mcP_d}q_{d,\lambda}(w)w_\lambda$, where $\mcP_d$ is the set of all partitions of $d$, $w_\lambda:=\prod_{i=1}^{l(\lambda)}w_{\lambda_i}$ and $q_{d,\lambda}\in\mbC[[w]]$. Let us consider the lexicographical order on monomials $w_\lambda$, $\lambda\in\mcP_d$. We have
\begin{gather}\label{eq:uniqueness 1,tmp 2}
(q_{d,\lambda}w_\lambda)_*(ww_x)-w\d_x(q_{d,\lambda}w_\lambda)=\left(m_1(\lambda)+\sum_{i\ge 2}(i+1)m_i(\lambda)\right)q_{d,\lambda}w_\lambda w_x+
\begin{smallmatrix}
\text{monomials}\\\text{with lower}\\\text{lexicographical order}
\end{smallmatrix},
\end{gather}
where $m_i(\lambda):=\sharp\{1\le j\le l(\lambda)|\lambda_j=i\}$. Suppose that $q_d\ne 0$. Then choose the lexicographically maximal $\lambda$ such that $q_{d,\lambda}\ne 0$. By~\eqref{eq:uniqueness 1,tmp 2}, $(q_d)_*(ww_x)-w\d_xq_d$ is equal to the sum of the term $\left(m_1(\lambda)+\sum_{i\ge 2}(i+1)m_i(\lambda)\right)q_{d,\lambda}w_\lambda w_x$ and monomials of lower order. Since $m_1(\lambda)+\sum_{i\ge 2}(i+1)m_i(\lambda)\ne 0$, we get a contradiction, which proves the lemma.
\end{proof}

Let us now prove Lemma~\ref{lemma:second uniqueness lemma}. We decompose $P(u,w,\eps)=\sum_{i=0}^{[d/2]}P_d(u,w,\eps)$, where $\odeg_u P_i=2i$ and $\odeg_w P_i=d-2i$. Denote by $D_t$ and $D_\tau$ the operators of derivation along the flows $\frac{\d}{\d t}$ and $\frac{\d}{\d\tau}$, respectively. We then compute
\begin{align*}
&D_tD_\tau w-D_\tau D_tw=\d_x\sum_{k=0}^{[(d+1)/2]}L_k(u,w,\eps),\quad\text{where}\\
&L_k(u,w,\eps)=\sum_{n\ge 0}\left(\frac{\d P_k}{\d w_n}\d_x^{n+1}Q+\frac{\d P_{k-1}}{\d w_n}\d_x^{n+1}u-\frac{\d Q}{\d w_n}\d_x^{n+1}P_k\right)-\delta_{k,(d+1)/2}\d_x\frac{u^{[(d+1)/2]}}{[(d+1)/2]!}.
\end{align*}
Here we adopt the convention $P_i:=0$, if $i<0$ or $i>[d/2]$. Note that $\odeg_u L_k=2k$ and $\odeg_w L_k=d+1-2k$. Since the flows $\frac{\d}{\d t}$ and $\frac{\d}{\d\tau}$ commute, we obtain that $L_k=0$ for all~$k$. 

Let us prove that the equations $L_k=0$, $0\le k\le [d/2]$, determine the differential polynomials~$P_k$ uniquely. By Lemma~\ref{lemma:first uniqueness lemma}, the equation $L_0=0$ determines~$P_0$ uniquely. Suppose that $1\le k\le [d/2]$ and that we have already determined $P_{k-1}$. We have to prove that if $R\in\hcA^{[0]}_{u,w}$ satisfies the properties $\odeg_u R=2k$, $\odeg_w R=d-2k$ and
$\sum_{n\ge 0}\left(\frac{\d R}{\d w_n}\d_x^{n+1}Q-\frac{\d Q}{\d w_n}\d_x^{n+1}R\right)=0$, then $R=0$. Considering the $\eps$-expansion of $R$, it is easy to see that it is enough to show that if $r\in\cA_{u,w}^{[p]}$, $p\ge 0$, satisfies $\odeg_u r=2k$, $\odeg_w r=d-2k$ and
\begin{gather}\label{eq:uniqueness 2,tmp 1}
\sum_{n\ge 0}\frac{\d r}{\d w_n}\d_x^n(ww_x)-w\d_x r=0,
\end{gather}
then $r=0$. 

Suppose that $r\ne 0$. Let us then decompose
$$
r=\sum_{\substack{\lambda=(\lambda_1,\ldots,\lambda_k)\\\lambda_1\ge\ldots\ge\lambda_k\ge 0\\|\lambda|\le p}}r_\lambda(w)u_\lambda,\quad r_\lambda\in\cA^{[p-|\lambda|]}_w,
$$
and choose the maximal $e$ such that $r_\lambda\ne 0$ for some $\lambda$ with $|\lambda|=e$. Then 
\begin{gather}\label{eq:uniqueness 2,tmp 2}
\sum_{n\ge 0}\frac{\d r}{\d w_n}\d_x^n(ww_x)-w\d_x r=\sum_{|\lambda|=e}(-wr_\lambda\d_xu_\lambda)+\sum_{|\lambda|\le e}f_\lambda(w)u_\lambda,
\end{gather}
for some $f_\lambda\in\cA_w^{[p+1-|\lambda|]}$. The right-hand side of~\eqref{eq:uniqueness 2,tmp 2} is clearly not equal to zero, which contradicts equation~\eqref{eq:uniqueness 2,tmp 1}. The lemma is proved. 
\end{proof}

\subsubsection{Step 5: proof of Theorems \ref{theorem:discrete KdV}--\ref{theorem:extended discrete KdV}}\label{subsubsection:step 5}

Let us first prove Theorem~\eqref{theorem:discrete KdV}. By Propositions~\ref{proposition:hierarchy of topological type for extended} and~\ref{proposition:flows (2,0) and (1,1)}, the Miura transformation 
\begin{gather}\label{eq:Miura between DZ and discrete KdV}
u(w^1,w^2,\eps)=L^{-1}w^1,\qquad v(w^1,w^2,\eps)=L^{-1}w^2
\end{gather}
transforms the flows $\frac{\d}{\d t^2_d}$ of the hierarchy of topological type~\eqref{eq:hierarchy of topological type for extended} to a system of the form
\begin{gather*}
\frac{\d u}{\d t^2_d}=0,\qquad\frac{\d v}{\d t^2_d}=\d_x\tQ^2_{2,d},\quad\tQ^2_{2,d}\in\hcA^{[0]}_{u,v},
\end{gather*}
satisfying
$$
\tQ^2_{2,0}=-\frac{1}{4}R(v^2-4u),\qquad \odeg\tQ^2_{2,d}=2d+2,\qquad\left.\tQ^2_{2,d}\right|_{\substack{\eps=0\\u=0}}=\frac{v^{2d+2}}{(-4)^{d+1}(d+1)!}.
$$
On the other hand, the equations of the discrete KdV hierarchy~\eqref{eq:discrete KdV hierarchy} have the form
\begin{gather*}
\frac{\d u}{\d\tau_d}=0,\qquad\frac{\d v}{\d\tau_d}=\d_x\DK_{2,d},\quad\DK_{2,d}\in\hcA^{[0]}_{u,v},
\end{gather*}
and satisfy the properties $\DK_{2,0}=\tQ^2_{2,0}$, $\odeg\DK_{2,d}=\odeg\tQ^2_{2,d}=2d+2$. 

Let us check that $\left.\DK_{2,d}\right|_{\substack{\eps=0\\u=0}}=\frac{v^{2d+2}}{(-4)^{d+1}(d+1)!}$. Note that for any $f,g\in\cA_{u,v}$ and $m,n\ge 0$ we have
$$
(i\eps)^{-1}[f\Lambda^m,g\Lambda^n]=(mf\d_x g-ng\d_x f+O(\eps))\Lambda^{m+n}.
$$
Therefore,
\begin{align*}
\left.\d_x\DK_{2,d}\right|_{\substack{\eps=0\\u_*=0}}=&\left.\left((i\eps)^{-1}\frac{2^d}{(2d+1)!!}\Coef_\Lambda\left[(\Lambda^2+v\Lambda)^{d+1/2}_+,\Lambda^2+v\Lambda\right]\right)\right|_{\eps=0}=\\
=&\frac{2^d}{(2d+1)!!}(-v)\d_x\Coef_{z^0}\left((z^2+vz)^{d+1/2}\right)=\\
=&\frac{2^d}{(2d+1)!!}(-v)\d_x\left(\frac{(-1)^d(2d-1)!!}{2^{3d+1}d!}v^{2d+1}\right)=\\
=&\d_x\frac{v^{2d+2}}{(-4)^{d+1}(d+1)!}.
\end{align*}
Thus, $\left.\DK_{2,d}\right|_{\substack{\eps=0\\u=0}}=\left.\tQ^2_{2,d}\right|_{\substack{\eps=0\\u=0}}$ and, by Lemma~\ref{lemma:second uniqueness lemma}, $\tQ^2_{2,d}=\DK_{2,d}$, which proves Theorem~\ref{theorem:discrete KdV}.

Let us prove Theorem~\ref{theorem:existence of the extended discrete KdV}. We have just proved that after the Miura transformation~\eqref{eq:Miura between DZ and discrete KdV} the flows $\frac{\d}{\d t^2_d}$ of the hierarchy of topological type~\eqref{eq:hierarchy of topological type for extended} coincide with the flows of the discrete KdV hierarchy. The flows~$\frac{\d}{\d t^1_d}$ then give extra commuting flows for the discrete KdV hierarchy and, by Proposition~\ref{proposition:hierarchy of topological type for extended}, they satisfy the requirements from Part 1 of Theorem~\ref{theorem:existence of the extended discrete KdV}. The uniqueness of these extra flows is guaranteed by Lemma~\ref{lemma:second uniqueness lemma}. The formula for~$\DK_{1,1}$ follows from Proposition~\ref{proposition:flows (2,0) and (1,1)}.

After what we have done, Theorem~\ref{theorem:extended discrete KdV} becomes obvious, because, as we have just explained, the extended discrete KdV hierarchy coincides with the hierarchy of topological type~\eqref{eq:hierarchy of topological type for extended}, transformed by the Miura transformation~\eqref{eq:Miura between DZ and discrete KdV}.


\section{DR hierarchy for the extended $2$-spin theory}

In this section we briefly explain how to extend to the context of F-CohFT the construction of the double ramification hierarchy, traditionally associated with (partial) CohFTs. The idea is to work directly with the evolutionary PDEs (vector fields on the formal loop space, see for instance \cite{BDGR16b,Ros17}) instead of Hamiltonians and Poisson structure that are lost when passing to F-CohFTs. We then apply these constructions to this paper's main example of F-CohFT, namely the extended $2$-spin theory, computing the DR hierarchy explicitly.

\subsection{DR hierarchy for F-CohFTs}

Let $c_{g,n+1}\colon V^*\otimes V^{\otimes n} \to H^\even(\oM_{g,n+1},\mbC)$ be any F-CohFT and $\DR_g(a_1,\ldots,a_n)\in H^{2g}(\oM_{g,n},\mbQ)$, where $(a_1,\ldots,a_n)\in\mbZ^n$, be the double ramification cycle. Recall that, by Hain's formula \cite{Hai11} (and the result of~\cite{MW13}) for the DR cycle on the moduli of curves of compact type, and the vanishing of $\lambda_g$ on its complement in $\oM_{g,n}$, the cohomology class $\lambda_g\DR_g(-\sum_{j=1}^n a_j,a_1,\ldots,a_n) \in H^{4g}(\oM_{g,n+1},\mbQ)$ is a degree $2g$ homogeneous polynomial in the coefficients $a_1,\ldots,a_n$.

Consider formal variables $u^1,\ldots,u^{\dim V}$. The {\it double ramification hierarchy} is the infinite system of PDEs
\begin{gather}\label{eq:DR hierarchy}
\frac{\d u^\alpha}{\d t^\beta_d}=\d_xP^\alpha_{\beta,d},\quad 1\le \alpha,\beta\le \dim V,\quad d\ge 0,
\end{gather}
where
\begin{equation*}
P^\alpha_{\beta,d}:=\sum_{\substack{g\geq 0,\,n\geq 0\\2g+n>0\\k_1,\ldots,k_n\geq 0}} \frac{\eps^{2g}}{n!} \Coef_{(a_1)^{k_1}\ldots(a_n)^{k_n}} \left(\int_{\DR_g(-\sum_{j=1}^n a_j,0,a_1,\ldots,a_n)} \hspace{-0.8cm}\lambda_g \psi_2^d c_{g,n+2}(e^\alpha\otimes e_\beta\otimes \otimes_{j=1}^n e_{\alpha_j}) \right)\prod_{j=1}^n u^{\alpha_j}_{k_j}.
\end{equation*}
We have the following result.
\begin{theorem}
All the equations of the DR hierarchy (\ref{eq:DR hierarchy}) are compatible with each other, namely $$\frac{\d}{\d t^{\beta_2}_{d_2}}\left(\frac{\d u^\alpha}{\d t^{\beta_1}_{d_1}}\right) = \frac{\d}{\d t^{\beta_1}_{d_1}}\left(\frac{\d u^\alpha}{\d t^{\beta_2}_{d_2}}\right)$$
for any $1\leq \alpha,\beta_1,\beta_2\leq \dim V$, $d_1,d_2\geq 0$.
\end{theorem}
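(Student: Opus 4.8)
I would show the stronger infinitesimal statement that the evolutionary derivations $D_{\beta,d}$ of $\hcA_{u^1,\ldots,u^{\dim V}}$ defined by $D_{\beta,d}u^\gamma=\d_x P^\gamma_{\beta,d}$ pairwise commute, since the asserted compatibility is exactly $[D_{\beta_1,d_1},D_{\beta_2,d_2}]u^\alpha=0$. Because each $D_{\beta,d}$ commutes with $\d_x$ and $D_{\beta_2,d_2}P^\alpha_{\beta_1,d_1}=\sum_{m\ge 0}\frac{\d P^\alpha_{\beta_1,d_1}}{\d u^\gamma_m}\,\d_x^{m+1}P^\gamma_{\beta_2,d_2}$ by the chain rule, the problem reduces to proving
\[
\d_x W^\alpha=0,\qquad W^\alpha:=D_{\beta_2,d_2}P^\alpha_{\beta_1,d_1}-D_{\beta_1,d_1}P^\alpha_{\beta_2,d_2},
\]
where $W^\alpha$ is manifestly antisymmetric under $(\beta_1,d_1)\leftrightarrow(\beta_2,d_2)$. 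The overall plan is to transcribe the commutativity argument of~\cite{Bur15a,BR16} to the metric-free setting, the single structural change being that wherever that argument contracts two $V$-legs created at a node by the metric $\eta^{\mu\nu}$, here one contracts a $V$-leg against a $V^*$-leg by the canonical pairing $e^\mu(e_\nu)=\delta^\mu_\nu$ supplied by axiom~(iii) of Definition~\ref{definition:F-CohFT}.

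First I would rewrite $W^\alpha$ as a generating series of intersection numbers. Substituting the definition of $P^\alpha_{\beta_1,d_1}$, the operator $\sum_m\frac{\d}{\d u^\gamma_m}\d_x^{m+1}$ glues the marked point picked out by $\frac{\d}{\d u^\gamma_m}$ (carrying $e_\gamma$) to the output point (carrying $e^\gamma$) of $P^\gamma_{\beta_2,d_2}$: the powers of $\d_x$ become the momentum $b$ through the new node, and momentum conservation for the double ramification cycle forces $-b$ on the opposite branch. Since $\lambda_g\DR_g$ is polynomial in the $a_i$ (Hain's formula~\cite{Hai11} together with the vanishing of $\lambda_g$ off compact type, as recalled before the statement), this pairing of polynomial dependences is well defined, and $W^\alpha$ becomes a sum over splittings $g=g_1+g_2$ and marking distributions of products of integrals over $\oM_{g_1,\ast}\times\oM_{g_2,\ast}$.

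Next I would recognise each such product as the restriction to a separating boundary divisor of a single integral over $\oM_{g,n+3}$, using three gluing identities in parallel: $\gl^*\lambda_g=\lambda_{g_1}\lambda_{g_2}$ for the Hodge class, the compatibility of $\DR_g$ with gluing at a separating node (node momenta $\pm b$), and the factorisation $\gl^*c_{g,n+3}(e^\alpha\otimes\cdots)=c_{g_1,\ast}(e^\alpha\otimes\cdots\otimes e_\mu)\,c_{g_2,\ast}(e^\mu\otimes\cdots)$ of axiom~(iii). The decisive simplification is that $\lambda_g$ restricts to $0$ on the non-separating divisor $\delta_{\mathrm{irr}}$ (there the Hodge bundle acquires a trivial summand, so $\gl^*\lambda_g=c_g(\mathbb E_{g-1}\oplus\mathcal O)=0$), so only separating nodes contribute; this is exactly why axiom~(iii) alone suffices and the non-separating gluing, which an F-CohFT does not possess, is never required.

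Finally I would show that the resulting boundary expression for $\d_x W^\alpha$ is invariant under $(\beta_1,d_1)\leftrightarrow(\beta_2,d_2)$: once the extra $\d_x$ acting at the output point is accounted for, exchanging the two flows amounts to swapping the two genus components together with the reversal $b\mapsto-b$ of the node momentum, under which $\DR_g$ is invariant, while the relabelling of the remaining marked points is absorbed by the $S_n$-equivariance of axiom~(i). Being at once symmetric (from the geometry) and antisymmetric (by construction), $\d_x W^\alpha$ must vanish. I expect this last symmetry step to be the main obstacle: one has to track the node momentum and the orientation of the $V$--$V^*$ pairing carefully enough to certify that swapping the two flows is a genuine relabelling of one and the same sum — precisely the point at which the metric-free formulation must be checked rather than quoted from~\cite{Bur15a}.
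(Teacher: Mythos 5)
Your reduction and your first expansion are sound and agree with the paper: you pass to $W^\alpha=D_{\beta_2,d_2}P^\alpha_{\beta_1,d_1}-D_{\beta_1,d_1}P^\alpha_{\beta_2,d_2}$, expand by the chain rule into a generating series over glued cycles $\DR_{g_1}\boxtimes\DR_{g_2}$ with node momenta $\pm b$, and correctly observe that the metric contraction of the CohFT argument is replaced by the $V$--$V^*$ pairing of axiom (iii). The gap is in your final step. Axiom (iii) always places the covector $e^\mu$ at the node on the component \emph{not} containing the output leg $e^\alpha$. Consequently the strata contributing to $D_{\beta_2,d_2}P^\alpha_{\beta_1,d_1}$ are exactly those where the output point and the $(\beta_1,d_1)$-point sit on one component and the $(\beta_2,d_2)$-point on the other, while those contributing to $D_{\beta_1,d_1}P^\alpha_{\beta_2,d_2}$ have the output together with the $(\beta_2,d_2)$-point instead. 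These are two \emph{disjoint} families of boundary divisors of $\oM_{g,n+3}$, and ``swapping the two genus components together with $b\mapsto-b$'' fixes each divisor rather than carrying one family to the other; to identify the families you would have to move the distinguished covector leg across the node, which is not a relabelling and is precisely the operation an F-CohFT does not provide. So $\d_xW^\alpha$ is not ``symmetric from the geometry,'' and the symmetric-plus-antisymmetric conclusion fails. (Relatedly, there is no restriction formula $\gl^*\DR_g=\DR_{g_1}\boxtimes\DR_{g_2}$ at a separating node: the glued products are pushforwards from the boundary, not restrictions of a single class, so the ``three gluing identities in parallel'' do not assemble the products into one integral over $\oM_{g,n+3}$.)

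What closes the argument in the paper is a genuine cohomological relation, imported from \cite[Section~4]{Bur15a}: with the $(\beta_1,d_1)$-point pinned to the first factor and the $(\beta_2,d_2)$-point to the second,
$$
\sum_{\substack{I\sqcup J=\{1,4,\ldots,n+3\}\\ k\in\mbZ,\ g_1+g_2=g}}\lambda_g\,k\,\DR_{g_1}(0_2,A_I,k)\boxtimes\DR_{g_2}(0_3,A_J,-k)=0\quad\text{in }H^*(\oM_{g,n+3},\mbQ).
$$
Intersecting this with $\psi_2^{d_1}\psi_3^{d_2}c_{g,n+3}(e^{\alpha}\otimes\cdots)$ and applying axiom (iii), the terms with $1\in I$ assemble into $D_{\beta_2,d_2}P^\alpha_{\beta_1,d_1}$ and those with $1\in J$ into $-D_{\beta_1,d_1}P^\alpha_{\beta_2,d_2}$; the cancellation thus happens between classes supported on different divisors and rests on a nontrivial identity (proved via Hain's formula and the vanishing of $\lambda_g$ off compact type), not on a term-by-term involution. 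You correctly sensed that the symmetry step is the obstacle, but the fix is not a more careful bookkeeping of the $V$--$V^*$ orientation; it is to supply this weighted vanishing relation as the key input.
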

\begin{proof}
The proof mirrors closely the one for commutativity of the Hamiltonians for the DR hierarchy of a genuine CohFT, see \cite[Section~4]{Bur15a}. For a subset $I=\{i_1,i_2,\ldots\}\subset \{1,4,\ldots,n+3\}$, $i_1<i_2<\ldots$, $n\geq 0$, let $A_I:=(a_{i_1},a_{i_2},\ldots)$. For $I,J\subset\{1,4,\ldots,n+3\}$ with $I\sqcup J=\{1,4,\ldots,n+3\}$, and for $g_1,g_2>0$ with $g_1+|I|>0$, $g_2+|J|>0$, let us denote by $\DR_{g_1}(0_2,A_I,k)\boxtimes \DR_{g_2}(0_3,A_J,-k)$ the cycle in~$\oM_{g_1+g_2,n+3}$ obtained by gluing the two double ramification cycles at the marked points labeled by the integer~$k$. Here, by slight abuse of notation, $0_i$ indicates a coefficient $0$ at the marked point $i$ and the coefficient $a_j$, for $j\in I$ or $j \in J$, is attached to the marked point $j$. Then, for any $g,n\geq 0$,
\begin{equation*}
\sum_{\substack{I,J\subset\{1,4,\ldots,n+3\}\\ I\sqcup J=\{1,4,\ldots,n+3\}\\ k\in \mbZ,\, g_1\ge 0,\,g_2 \ge 0\\g_1+g_2 = g\\g_1+|I|,\,g_2+|J|>0}}  \lambda_g\  k\  \DR_{g_1}(0_2,A_I,k)\boxtimes \DR_{g_2}(0_3,A_J,-k)=0.
\end{equation*}
One then needs to intersect this relation with the class $ \psi_2^{d_1} \psi_3^{d_2} c_{g,n+3}(e^{\alpha_1}\otimes \otimes_{i=2}^{n+3}e_{\alpha_i})$, where as usual the covector $e^{\alpha_1}$ is attached to the marked point $1$ and each vector $e_{\alpha_i}$ is attached to the marked point $i$. Thanks to the gluing axiom of the F-CohFT and forming the corresponding generating functions, depending on whether, in the above sum, the marked point $1$ belongs to the subset $I$ or $J$, we obtain the left-hand side or minus the right-hand side of the equation in the statement of theorem.
\end{proof}

We also have the following property that will be important for the computation of the DR hierarchy for the extended $2$-spin theory in the next section.
\begin{lemma}\label{lemma:dilaton for DR hierarchy}
We have
$$
\frac{\d P^\alpha_{1,1}}{\d u^\beta}=D P^\alpha_{\beta,0},
$$
where $D:=\eps\frac{\d}{\d\eps}+\sum_{n\ge 0}u^\gamma_n\frac{\d}{\d u^\gamma_n}$.
\end{lemma}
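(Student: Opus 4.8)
The plan is to prove the identity $\frac{\d P^\alpha_{1,1}}{\d u^\beta}=D P^\alpha_{\beta,0}$ by a direct comparison of the generating series defining $P^\alpha_{1,1}$ and $P^\alpha_{\beta,0}$, recognizing the operator $D$ as the infinitesimal generator of the dilaton rescaling acting on the DR hierarchy coefficients. The key geometric input will be the \emph{dilaton equation}: the class $\lambda_g \DR_g(-\sum a_j, a_1,\ldots,a_n)$ pulls back nicely under the forgetful map $\pi\colon \oM_{g,n+1}\to\oM_{g,n}$ that drops a marked point carrying ramification coefficient $0$, and on the corresponding moduli space $\psi_2^1 = \psi$ at the dilaton insertion contributes the usual factor $(2g-2+n)$. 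First I would write out $P^\alpha_{1,1}$ explicitly, isolating the insertion $e_1\otimes \psi_2^1$ at the marked point labelled $2$, and compute $\frac{\d}{\d u^\beta}$, which extracts the term where one of the $u^{\alpha_j}_{k_j}$ factors is replaced by setting $\alpha_j=\beta$, $k_j=0$, thereby relabelling an insertion to $e_\beta$ at $\psi$-degree $0$.

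The heart of the argument is then to match this with $D P^\alpha_{\beta,0}$. Here I would use the standard dilaton-type relation for the DR cycle coupled with $\lambda_g$: the insertion of $e_1$ with a single $\psi$-class at an extra point can be removed via the projection formula for $\pi$, producing the Euler-type operator that counts $\eps$-degree (i.e.\ genus, since $\eps^{2g}$) plus the total number of derivative/descendent variables $u^\gamma_n$. Concretely, since the unit axiom (ii) of the F-CohFT gives $c_{g,n+2}(\cdots\otimes e_1) = \pi^* c_{g,n+1}(\cdots)$, and since $\pi_*(\psi_2 \cdot \pi^*(\omega)) = (2g-2+n)\,\omega$ in the presence of the forgotten point together with the behavior of $\lambda_g\DR_g$ under pullback, the dilaton insertion evaluates the scaling weight of each monomial. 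The weight $\eps\frac{\d}{\d\eps}$ captures the $2g$ contribution and $\sum_n u^\gamma_n \frac{\d}{\d u^\gamma_n}$ captures the $n$ contribution, reproducing exactly the factor $2g-2+n$ (up to the bookkeeping of the two distinguished points at $2$ and $\beta$), which is what $D P^\alpha_{\beta,0}$ computes term by term.

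I expect the main obstacle to be the careful handling of the double ramification cycle under the forgetful/dilaton pullback. Unlike a pure $\psi$-class computation on $\oM_{g,n}$, here one must verify that $\lambda_g \DR_g(-\sum a_j, 0, a_1,\ldots,a_n)$ behaves correctly when the dilaton point (with $\psi$) is forgotten: specifically that forgetting a point carrying ramification coefficient $0$ commutes with the DR construction in the sense that $\pi^*\big(\lambda_g\DR_g(-\sum a_j, a_1,\ldots,a_n)\big) = \lambda_g\DR_g(-\sum a_j, 0, a_1,\ldots,a_n)$ on $\oM_{g,n+1}$. This is where the polynomiality of $\lambda_g\DR_g$ in the $a_i$ (recalled in the excerpt via Hain's formula) together with the fact that a coefficient $0$ contributes trivially to the DR cycle must be invoked; once this compatibility is established, the dilaton equation $\pi_*(\psi_{\mathrm{dil}}\cdot \pi^*\Theta) = (2g-2+n)\Theta$ applies verbatim.

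The remaining steps are then essentially bookkeeping. After establishing the pullback compatibility, I would substitute into the series, apply the projection formula, and observe that the operator extracting the $\eps$ and $u$ weights is precisely $D$; matching the coefficient of each monomial $\prod_j u^{\alpha_j}_{k_j}$ on both sides completes the proof. The symmetry/equivariance axiom (i) ensures that the choice of which marked point plays the role of the differentiated variable $u^\beta$ versus the fixed insertion $e_\beta$ does not affect the result, so no subtlety arises from the labelling conventions.
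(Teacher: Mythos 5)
Your proposal is correct and follows essentially the same route as the paper's proof: differentiate the generating series for $P^\alpha_{1,1}$ to create an extra insertion $e_\beta$ with ramification coefficient $0$, then remove the dilaton insertion $e_1\otimes\psi$ via axiom (ii), the compatibility of $\lambda_g\DR_g$ with forgetting a $0$-coefficient point, and the pushforward $\pi_*\psi_3=2g+n$, which matches the Euler operator $D$. The only detail to tighten is the numerology you flagged: on $\oM_{g,n+3}\to\oM_{g,n+2}$ the dilaton factor is $2g-2+(n+2)=2g+n$, which is exactly the weight $D=\eps\frac{\d}{\d\eps}+\sum_{n\ge 0}u^\gamma_n\frac{\d}{\d u^\gamma_n}$ assigns to the monomial $\eps^{2g}\prod_{j=1}^n u^{\alpha_j}_{k_j}$.
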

\begin{proof}
The proof is again similar to the proof of the analogous equation for the Hamiltonian DR hierarchy (see \cite[Section~4.2.5]{Bur15a}) and based on the equation
\begin{gather*}
\frac{\d P^\alpha_{1,1}}{\d u^\beta}=\sum_{\substack{g,n\geq 0\\k_1,\ldots,k_n\geq 0}} \frac{\eps^{2g}}{n!} \Coef_{(a_1)^{k_1}\ldots(a_n)^{k_n}} \left(\int_{\DR_g(-\sum a_j,0,0,a_1,\ldots,a_n)} \hspace{-1.7cm}\lambda_g \psi_3 c_{g,n+3}(e^\alpha\otimes e_\beta\otimes e_1\otimes \otimes_{j=1}^n e_{\alpha_j}) \right)\prod_{j=1}^n u^{\alpha_j}_{k_j}
\end{gather*}
together with the property $\pi_*\psi_3=2g+n$, where $\pi\colon\oM_{g,n+3}\to\oM_{g,n+2}$ is the forgetful map, that forgets the third marked point.
\end{proof}

We end this section by remarking that several results and properties of the Hamiltonian DR hierarchy of a (partial) CohFT \cite{BR16} have an analogue for the DR hierarchy of an F-CohFT. In particular the string equation and recursion for the higher symmetries in terms of the flow~$\frac{\d}{\d t^1_1}$ can be proved for F-CohFTs, providing a far-reaching generalization of integrable systems of DR type \cite{BDGR16b} to the non-Hamiltonian context. This will be the topic of a forthcoming paper.

\subsection{DR hierarchy for the extended $2$-spin theory}

Consider the DR hierarchy for the extended $2$-spin theory:
\begin{gather}\label{eq:DR hierarchy for the extended 2-spin}
\frac{\d u^\alpha}{\d t^\beta_d}=\d_xP^\alpha_{\beta,d}(u^1,u^2,\eps),\quad 1\le \alpha,\beta\le 2,\quad d\ge 0.
\end{gather}
\begin{theorem}\label{theorem:DR hierarchy}
The DR hierarchy~\eqref{eq:DR hierarchy for the extended 2-spin} is related to the extended discrete KdV hierarchy by the Miura transformation
$$
u(u^1,u^2,\eps)=u^1,\qquad v(u^1,u^2,\eps)=\sqrt{R}u^2.
$$
\end{theorem}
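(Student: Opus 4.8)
The plan is to show that, once written in the ``normalized'' variables in which the seed flow takes the shape of Proposition~\ref{proposition:main proposition about compatibility}, the DR hierarchy \eqref{eq:DR hierarchy for the extended 2-spin} coincides flow by flow with the hierarchy of topological type \eqref{eq:hierarchy of topological type for extended}, and then to compose the two Miura transformations. First I would record the structural features of the DR flows. Each flow has the form $\frac{\d u^\alpha}{\d t^\beta_d}=\d_x P^\alpha_{\beta,d}$ with $P^\alpha_{\beta,d}\in\hcA^{\ev,[0]}_{u^1,u^2}$, the string equation of the DR hierarchy gives $P^\alpha_{1,0}=u^\alpha$, and the grading of the F-CohFT (part~3 of Theorem~\ref{theorem:main properties of the extended theory}) combined with the dimension count in the definition of $P^\alpha_{\beta,d}$ forces the homogeneity $\odeg P^2_{\beta,d}=2d+\beta$, with $\odeg u^1_n=2$, $\odeg u^2_n=1$.

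Next I would dispose of the easy matches. Since $c^{2,\ext}_{g,n+2}(e^1\otimes e_\beta\otimes\cdots)$ is supported on $\beta=1$ and then equals $\lambda_g$ (part~4 of Theorem~\ref{theorem:main properties of the extended theory} and \eqref{eq:extended and lambda_g}), the second factor $\lambda_g$ coming from the DR integrand yields $\lambda_g^2=0$ for $g\ge1$; hence $P^1_{2,d}=0$ and $P^1_{1,d}$ is purely genus~$0$. A direct genus-$0$ evaluation, using that $\DR_0$ is the fundamental class and $\int_{\oM_{0,n+2}}\psi_2^{n-1}=1$, gives $P^1_{1,d}=\frac{(u^1)^{d+1}}{(d+1)!}$, so under $u=u^1$ one gets $\frac{\d u}{\d t^2_d}=0$ and $\frac{\d u}{\d t^1_d}=\frac{u^d}{d!}u_x$, matching the $u$-flows of the extended discrete KdV hierarchy. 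At $\eps=0$ the whole DR hierarchy reduces to the principal hierarchy of the flat F-manifold \eqref{eq:vector potential for extended 2-spin}, shared with the hierarchy of topological type; this identifies the dispersionless limit and in particular the leading terms $\left.P^2_{\beta,d}\right|_{\eps=0,\,u^1=0}$.

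The core is the seed flow $\frac{\d}{\d t^2_0}$. By homogeneity $P^2_{2,0}=Q_0(u^2,\eps)+Yu^1$ with $Y=1+\sum_{g\ge1}Y_g(\eps\d_x)^{2g}$ a constant-coefficient operator, so the Miura transformation $u:=u^1$, $w:=Y^{-1}u^2$ puts $\frac{\d}{\d t^2_0}$ and the dilaton flow $\frac{\d}{\d t^1_1}$ (whose $u$-component is $uu_x$) into exactly the form \eqref{eq:general system,1}, \eqref{eq:general system,2}. The commutativity of the DR flows proved above then forces, via Proposition~\ref{proposition:main proposition about compatibility}, $Q(w)=-\frac14(Rw)^2\big|_{\eps\mapsto\theta\eps}$ for a single parameter $\theta$, and it only remains to determine $\theta$ and the dressing operator $Y$. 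For this I would mirror Section~\ref{subsubsection:step 3}: introduce the generating series of the DR integrals $\int_{\DR_g}\lambda_g\,c^{2,\ext}_{g,3}(e^2\otimes e_2\otimes e_1)\psi^{2g}$, derive the analogues of relations \eqref{eq:first relation for hX}--\eqref{eq:third relation for hX} between them and $\widehat Y$ from the DR string equation and the dilaton equation of Lemma~\ref{lemma:dilaton for DR hierarchy}, and fix the one remaining constant by a single genus-$1$ intersection number, evaluated with Hain's formula for $\lambda_g\DR_g$ and the loop axiom \eqref{eq:loop property for extended}. The expected outcome is $\theta=1$ and $Y=\sqrt R$. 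This is the step I expect to be the main obstacle: the closed-form identification of the dressing operator $\sqrt R$, equivalently of the all-genus seed $P^2_{2,0}$, cannot be carried out genus by genus and requires the functional-equation argument fed by the dilaton equation.

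With $\theta=1$ and $Y=\sqrt R$, the seed and dilaton flows of the DR hierarchy in the variables $(u,w)=(u^1,R^{-1/2}u^2)$ become identical to those of the hierarchy of topological type in its normalized variables of Section~\ref{subsubsection:step 2} (where $X=R$, as computed in Section~\ref{subsubsection:step 3}). Since both hierarchies have commuting flows, the same $u$-flows, and the same dispersionless leading terms, the uniqueness Lemma~\ref{lemma:second uniqueness lemma} reconstructs every higher flow identically, so the two hierarchies coincide in the $(u,w)$ variables. Composing $u^2=\sqrt R\,w$ with the identification $v=Rw$ arising from $v=L^{-1}w^2$ and $w^2=RLw$ (Proposition~\ref{proposition:flows (2,0) and (1,1)}) gives $u=u^1$ and $v=\sqrt R\,u^2$, which is precisely the claimed Miura transformation.
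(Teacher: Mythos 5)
Your proposal follows the paper's proof essentially step for step: the vanishing $P^1_{2,d}=0$ and the explicit $P^1_{1,d}=\frac{(u^1)^{d+1}}{(d+1)!}$ via $\lambda_g^2=0$, the homogeneity $\odeg P^2_{\beta,d}=2d+\beta$, the reduction of the seed and dilaton flows to the normal form of Proposition~\ref{proposition:main proposition about compatibility} by a constant-coefficient dressing operator, the determination of $\theta=\pm1$ by a genus-$1$ computation with Hain's formula and the loop axiom~\eqref{eq:loop property for extended}, the identification of the dressing operator with $\sqrt{R}$ via the functional equation fed by the dilaton relation of Lemma~\ref{lemma:dilaton for DR hierarchy}, and the final appeal to the uniqueness Lemma~\ref{lemma:second uniqueness lemma} followed by composing the Miura transformations. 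The only cosmetic slips are the spurious factor $\psi^{2g}$ in your generating series of DR integrals (it would overshoot the dimension of $\DR_g$ capped with $\lambda_g\,c^{2,\ext}_{g,3}$) and the suggestion that an analogue of the third, Liu--Pandharipande-based relation is needed, which the paper avoids since Proposition~\ref{proposition:main proposition about compatibility} plus the dilaton equation already suffice.
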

\begin{proof}
In Section~\ref{subsubsection:step 5} we proved that the extended discrete KdV hierarchy is related to the hierarchy of topological type by the Miura transformation~\eqref{eq:Miura between DZ and discrete KdV}. Therefore, we have to prove that the DR hierarchy is related to the hierarchy of topological type by the Miura transformation
\begin{gather*}
w^1(u^1,u^2,\eps)=Lu^1,\qquad w^2(u^1,u^2,\eps)=L\sqrt{R}u^2.
\end{gather*}

Because of formula~\eqref{eq:extended and lambda_g}, we have $P^1_{2,d}=0$ and $P^1_{1,d}=\frac{(u^1)^{d+1}}{(d+1)!}$. Let $\odeg u^1_n:=2$ and $\odeg u^2_n:=1$, then from the definition of the DR hierarchy and the degree formula~\eqref{eq:degree of the extended class} it follows that $\odeg P^2_{\beta,d}=2d+\beta$. Consider the decomposition
$$
P^2_{\beta,d}=\sum_{k=0}^{d+\left[\frac{\beta}{2}\right]}P^2_{\beta,d,k},\quad \odeg_{u^1}P^2_{\beta,d,k}=2k,\quad \odeg_{u^2}P^2_{\beta,d,k}=2d+\beta-2k. 
$$
We see that the flows $\frac{\d}{\d t^2_0}$ and $\frac{\d}{\d t^1_1}$ of the DR hierarchy have the following form:
\begin{align}
&\frac{\d u^1}{\d t^2_0}=0, && \frac{\d u^1}{\d t^1_1}=u^1u^1_x,\label{eq:DR hierarchy, general 1}\\
&\frac{\d u^2}{\d t^2_0}=\d_x\left(P^2_{2,0,0}+T u^1\right), && \frac{\d u^2}{\d t^1_1}=\d_x\left(P^2_{1,1,0}+P^2_{1,1,1}\right),\label{eq:DR hierarchy, general 2}
\end{align}
where $T$ is a differential operator of the form
\begin{gather*}
T=1+\sum_{g\ge 1}T_g(\eps\d_x)^{2g},\quad T_g\in\mbC.
\end{gather*}

Clearly, the Miura transformation 
\begin{gather}\label{eq:DR--(u,w) Miura}
u^1(u,w,\eps)=u,\qquad u^2(u,w,\eps)=Tw
\end{gather}
transforms system~\eqref{eq:DR hierarchy, general 1},~\eqref{eq:DR hierarchy, general 2} to a system of the form~\eqref{eq:general system,1},~\eqref{eq:general system,2}, satisfying properties~\eqref{eq:general property,1}--\eqref{eq:general property,3}. Therefore, we can apply Proposition~\ref{proposition:main proposition about compatibility} and conclude that the differential polynomials~$Q$,~$C$ and~$B$ are given by formulas~\eqref{eq:formula for Q}--\eqref{eq:formula for B}. Let us now check that the unknown complex parameter $\theta$ from Proposition~\ref{proposition:main proposition about compatibility} is equal to~$\pm 1$.

\begin{lemma}\label{lemma:genus 1 of DR}
We have
\begin{gather*}
T_1=\frac{1}{24},\qquad \Coef_{\eps^2}P^2_{2,0,0}=-\frac{1}{48}\left(2u^2u^2_{xx}+(u^2_x)^2\right).
\end{gather*}
\end{lemma}
\begin{proof}
We compute
\begin{align*}
T_1=&\Coef_{a^2}\int_{\DR_1(a,-a,0)}\lambda_1 c^{2,\ext}_{1,3}(e^2\otimes e_1\otimes e_2)=\int_{\oM_{1,2}}\lambda_1 c^{2,\ext}_{1,2}(e^2\otimes e_2)=\\
=&\frac{1}{24}\int_{\oM_{1,2}}\iota^\circ_*(1) c^{2,\ext}_{1,2}(e^2\otimes e_2)\stackrel{\text{eq.~\eqref{eq:loop property for extended}}}{=}-\frac{1}{12}\int_{\oM_{0,4}}c^{2,\ext}_{0,4}(e^2\otimes e_2^{\otimes 3})=\frac{1}{24},
\end{align*}
where we denote by $\iota^\circ\colon\oM_{0,4}\to\oM_{1,2}$ the gluing map.

For the second equation of the lemma we do the following computation:
\begin{align*}
&\int_{\DR_1(a_1,a_2,a_3,0)}\lambda_1c^{2,\ext}_{1,4}(e^2\otimes e_2^{\otimes 3})\stackrel{\text{by Hain's formula}}{=}\\
=&\frac{\sum a_i^2}{2}\left[\int_{\oM_{1,4}}\lambda_1\psi_1 c_{1,4}^{2,\ext}(e^2\otimes e_2^{\otimes 3})-\left(\int_{\oM_{1,2}}\lambda_1 c^{2,\ext}_{1,2}(e^2\otimes e_2)\right)\left(\int_{\oM_{0,4}}c^{2,\ext}_{0,4}(e^2\otimes e_2^{\otimes 3})\right)\right]=\\
=&-\frac{a_2^2+a_3^2+a_2a_3}{24},
\end{align*}
which gives
\begin{align*}
\left.\Coef_{\eps^2}P^2_{2,0,0}\right|_{u^2_k=\sum_{n\in\mbZ}(in)^k p^2_n e^{inx}}=&-\frac{1}{2}\sum_{a_2,a_3\in\mbZ}\left(-\frac{a_2^2+a_3^2+a_2a_3}{24}\right)p^2_{a_2}p^2_{a_3}e^{i(a_2+a_3)x}=\\
=&\left.-\frac{1}{48}\left(2u^2u^2_{xx}+(u^2_x)^2\right)\right|_{u^2_k=\sum_{n\in\mbZ}(in)^k p^2_n e^{inx}}.
\end{align*}
\end{proof}

By this lemma,
\begin{gather*}
T=1+\frac{\eps^2}{24}\d_x^2+O(\eps^4),\qquad P^2_{2,0,0}=-\frac{1}{4}(u^2)^2-\frac{\eps^2}{48}\left(2u^2u^2_{xx}+(u^2_x)^2\right)+O(\eps^4),
\end{gather*}
which implies that $Q=-\frac{1}{4}w^2-\frac{\eps^2}{24}ww_{xx}+O(\eps^4)$. Therefore, $\theta^2=1$.

Let us now prove that $T=\sqrt{R}$. By Lemma~\ref{lemma:dilaton for DR hierarchy}, $\frac{\d P^2_{1,1}}{\d u^2}=D P^2_{2,0}$, and, in particular, 
$$
\frac{\d P^2_{1,1,1}}{\d u^2}=D T u^1.
$$
Since
$$
P^2_{1,1,1}=\frac{1}{2}T\left(u^1\cdot T^{-1}u^2\right)+\frac{1}{2}TR^{-1}\left(u^1\cdot RT^{-1}u^2\right)+\frac{\eps^2}{8}T\left(\d_x^2Ru^1\cdot RT^{-1}u^2\right),
$$
we get the relation
\begin{gather*}
\frac{1}{2}Tu^1+\frac{1}{2}TR^{-1}u^1+\frac{(\eps\d_x)^2}{8}TRu^1=DTu^1,
\end{gather*}
which can be equivalently written as
$$
\frac{d\hT}{d z}=\frac{1}{z}\left(-\frac{1}{2}+\frac{1}{2}\hR^{-1}+\frac{z^2}{8}\hR\right)\hT.
$$
This ordinary differential equation for the formal power series $\hT(z)\in\mbC[[z]]$ has a unique solution, satisfying the initial condition $\hT(0)=1$, and one can quickly check that the function~$\sqrt{\hR}$ satisfies this equation. Thus, $T=\sqrt{R}$.

We see that after the Miura transformations $w^1(u,w,\eps)=Lu$, $w^2(u,w,\eps)=RLw$ and~\eqref{eq:DR--(u,w) Miura} the flow $\frac{\d}{\d t^2_0}$ of both the hierarchy of topological type and the DR hierarchy has the form
\begin{align*}
\frac{\d u}{\d t^2_0}=&0,\\
\frac{\d w}{\d t^2_0}=&-\frac{1}{4}\d_x\left((Rw)^2-4u\right).
\end{align*}
For both hierarchies we have $\frac{\d u}{\d t^1_d}=\d_x\frac{u^{d+1}}{(d+1)!}$ and $\frac{\d u}{\d t^2_d}=0$. The dispersionless parts of the hierarchies coincide and $\odeg Q^2_{\beta,d}=\odeg P^2_{\beta,d}=2d+\beta$. Therefore, by Lemma~\ref{lemma:second uniqueness lemma}, the hierarchy of topological type and the DR hierarchy coincide in the coordinates $u$, $w$. This completes the proof of the theorem.
\end{proof}


\section{Extended $2$-spin theory and open Hodge integrals}

In~\cite{PST14,ST,Tes15} the authors initiated the study of the intersection theory on the moduli space $\oM_{g,k,l}$ of Riemann surfaces with boundary of genus $g$ with $k$ boundary marked points and~$l$ internal marked points. Recall that a closed Riemann surface is not considered as a Riemann surface with boundary and the genus of a Riemann surface with boundary is defined as the genus of its double. Moreover, in these works the authors constructed the integrals
\begin{gather}\label{eq:open psi-integrals}
\int_{\oM_{g,k,l}}\psi_1^{d_1}\psi_2^{d_2}\ldots\psi_l^{d_l}
\end{gather}
of the monomials in the psi-classes, attached to the internal marked points.

In~\cite[Section~5.2]{BCT17} the authors observed the following relation, which they called the {\it open-closed correspondence}:
\begin{gather}\label{eq:open-closed correspondence}
\int_{\oM_{0,k,l}}\prod_{i=1}^l\psi_i^{d_i}=(2\sqrt{-1})^{k-1}\int_{\oM_{0,l+k+1}}c^{2,\ext}_{0,l+k+1}(e^2\otimes e_1^{\otimes l}\otimes e_2^{\otimes k})\prod_{i=1}^l\psi_{i+1}^{d_i}.
\end{gather}
The authors of~\cite{BCT17} also proposed the idea that there should be a higher genus generalization of this correspondence, where the insertion $e^2$ corresponds to a boundary component of a Riemann surface with boundary and the insertion $e_2$ corresponds to a boundary marked point. To be more precise, the intersection number with a genus $g$ generalization of the class $c^{2,\ext}_{0,l+k+1}(e^2\otimes e_1^{\otimes l}\otimes e_2^{\otimes k})$ should correspond to the intersection number on the moduli space of Riemann surfaces with boundary obtained by removing an open disk from a closed Riemann surface of genus $g$. Note that the genus of such Riemann surfaces with boundary is equal to $2g$.

One of the original motivations of our work was to try to generalize the open-closed correspondence~\eqref{eq:open-closed correspondence} to all genera and construct an F-CohFT $\tc^{2,\ext}_{g,n+1}\colon (V^\ext)^*\otimes (V^\ext)^{\otimes n}\to H^{\even}(\oM_{g,n+1},\mbC)$ such that
\begin{align}
&\tc^{2,\ext}_{0,n+1}=c^{2,\ext}_{0,n+1},\label{eq:wrong F-CohFT,property 1}\\
&\tc^{2,\ext}_{g,n+1}(e^1\otimes e_1^{\otimes l}\otimes e_2^{\otimes k})=\delta_{k,0},\label{eq:wrong F-CohFT,property 2}\\
&\int_{\oM_{2g,k,l,1}}\prod_{i=1}^l\psi_i^{d_i}=C(g,k)\int_{\oM_{g,l+k+1}}\tc^{2,\ext}_{g,l+k+1}(e^2\otimes e_1^{\otimes l}\otimes e_2^{\otimes k})\prod_{i=1}^l\psi_{i+1}^{d_i},\label{eq:wrong F-CohFT,property 3}
\end{align}
where $C(g,k)$ are some rational constants, depending only on $g$ and $k$. Here $\oM_{2g,k,l,1}$ is the moduli space of Riemann surfaces with boundary with exactly one boundary component (see~\cite{ABT17}).

Surprisingly, there is a simple argument showing that an F-CohFT, satisfying properties~\eqref{eq:wrong F-CohFT,property 1}--\eqref{eq:wrong F-CohFT,property 3} doesn't exist. Indeed, suppose it exists and let us then consider the associated DR hierarchy. Property~\eqref{eq:wrong F-CohFT,property 3} dictates that
$$
\deg\tc^{2,\ext}_{g,l+k+1}(e^2\otimes e_1^{\otimes l}\otimes e_2^{\otimes k})=k-1,\quad\text{if $k$ is odd},
$$ 
and $\tc^{2,\ext}_{g,l+k+1}(e^2\otimes e_1^{\otimes l}\otimes e_2^{\otimes k})=0$, if $k$ is even. This implies that the flow $\frac{\d}{\d t^1_1}$ of the DR hierarchy has the following form:
\begin{align*}
&\frac{\d u^1}{\d t^1_1}=\d_x\left(\frac{(u^1)^2}{2}+\frac{\eps^2}{24}u^1_{xx}\right),\\
&\frac{\d u^2}{\d t^1_1}=\d_x\left(-\frac{(u^2)^3}{6}+u^1u^2+\alpha\eps^2 u^2_{xx}\right),
\end{align*}
for some complex constant $\alpha$. Using Lemma~\ref{lemma:dilaton for DR hierarchy}, we can then compute the equations for the flow $\frac{\d}{\d t^2_0}$:
\begin{align*}
&\frac{\d u^1}{\d t^2_0}=0,\\
&\frac{\d u^2}{\d t^2_0}=\d_x\left(-\frac{(u^2)^2}{4}+u^1\right).
\end{align*}
One can easily check that such flows $\frac{\d}{\d t^1_1}$ and $\frac{\d}{\d t^2_0}$ don't commute for any value of $\alpha$.

Nevertheless, the fact that $c^{2,\ext}_{g,k+l+1}(e^1\otimes e_1^{\otimes l}\otimes e_2^{\otimes k})=\delta_{k,0}\lambda_g$ motivates us to conjecture that the intersection numbers with the class $c^{2,\ext}_{g,k+l+1}(e^2\otimes e_1^{\otimes l}\otimes e_2^{\otimes k})$ should correspond to the open Hodge integrals. To be more precise, we propose the following conjecture.

\begin{conjecture}
There exists a geometric construction of open $\lambda_{2g}$-integrals
$$
\int_{\oM_{2g,k,l,1}}\lambda_{2g}\prod_{i=1}^l\psi_i^{d_i},
$$
such that the following relation holds:
$$
\int_{\oM_{2g,k,l,1}}\lambda_{2g}\prod_{i=1}^l\psi_i^{d_i}=C(g,k)\int_{\oM_{g,l+k+1}}c^{2,\ext}_{g,l+k+1}(e^2\otimes e_1^{\otimes l}\otimes e_2^{\otimes k})\prod_{i=1}^l\psi_{i+1}^{d_i},
$$
for some rational constants $C(g,k)$, depending on $g$ and $k$.
\end{conjecture}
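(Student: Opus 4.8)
The plan is to prove the conjecture by the same integrable-systems strategy that controls the closed side, after first supplying the geometric ingredient that is currently missing on the open side. Recall that the genus-$0$ correspondence \eqref{eq:open-closed correspondence} was obtained by matching the Pandharipande--Solomon--Tessler open intersection numbers against the genus-$0$ extended $2$-spin numbers; the difficulty in higher genus is that there is no intrinsic Hodge bundle on the moduli of bordered surfaces. First I would construct the open $\lambda_{2g}$-integrals by doubling: a bordered surface of doubled genus $2g$ with one boundary component is topologically a genus-$g$ closed surface with an open disk removed, so its double is a closed genus-$2g$ curve carrying an anti-holomorphic involution, and $\oM_{2g,k,l,1}$ maps to the real locus of a moduli space of such curves. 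One then defines $\lambda_{2g}$ as the restriction of the top Chern class of the genus-$2g$ Hodge bundle and integrates it, together with the $\psi$-monomial, over this real locus using the graded (boundary-spin) orientation of Solomon--Tessler.

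With such a definition in hand, the heart of the argument would mirror the closed-side computation of Section~\ref{subsection:proofs} and the proof of Theorem~\ref{theorem:DR hierarchy}. I would establish the open analogues of the string and dilaton equations and of the open topological recursion relations, now decorated by the insertion of $\lambda_{2g}$, and deduce that the generating function of the open $\lambda_{2g}$-integrals satisfies, after the change of variables \eqref{eq:series u and w}, the extended discrete KdV hierarchy that already governs the closed side. It is worth stressing that the insertion of $\lambda_{2g}$ is exactly the correction that restores grading-compatibility: the naive degree assignment $\deg\tc^{2,\ext}=k-1$ forced the flow $\frac{\d}{\d t^1_1}$ into a form that does not commute with $\frac{\d}{\d t^2_0}$, whereas the genuine class $c^{2,\ext}$ obeys the degree formula \eqref{eq:degree of the extended class} and is \emph{known} to give a DR hierarchy by Theorem~\ref{theorem:DR hierarchy}. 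The structural input that pins down the dilaton flow is the identity $c^{2,\ext}_{g,k+l+1}(e^1\otimes e_1^{\otimes l}\otimes e_2^{\otimes k})=\delta_{k,0}\lambda_g$ combined with Lemma~\ref{lemma:dilaton for DR hierarchy}, from which the full hierarchy is reconstructed.

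Having matched the hierarchies, I would invoke uniqueness. The closed-side generating function is determined by Theorems~\ref{theorem:discrete KdV}--\ref{theorem:extended discrete KdV} together with its genus-$0$ reduction \eqref{eq:open-closed correspondence}, and Lemma~\ref{lemma:second uniqueness lemma} guarantees that any solution of the extended discrete KdV hierarchy with the same dispersionless part and the same degree constraints coincides with it. Since the open $\lambda_{2g}$-integrals would satisfy the identical hierarchy with identical genus-$0$ initial data (up to the boundary-labelling factor $(2\sqrt{-1})^{k-1}$ already present in \eqref{eq:open-closed correspondence}), the two sides must agree up to the normalization constants $C(g,k)$. These constants would be extracted from low-complexity inputs --- the disk and annulus contributions and the first few $\lambda_{2g}\psi$-integrals --- matched against the normalization of the DR hierarchy of Theorem~\ref{theorem:DR hierarchy}.

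The main obstacle is the first step: giving a rigorous geometric meaning to $\lambda_{2g}$ on the moduli of bordered surfaces and proving that the resulting integral is well defined, including the orientation and boundary-contribution subtleties that already complicate the purely-$\psi$ open theory. Unlike the closed case, where $\lambda_{2g}$ is an honest cohomology class and Hain's formula controls its vanishing away from compact-type strata, on open moduli the class enters only through the double, and one must show that it descends compatibly with the graded structure orienting $\oM_{2g,k,l,1}$ and that its behaviour at boundary nodes is exactly what the open topological recursion demands. A secondary difficulty is establishing that recursion in the presence of $\lambda_{2g}$, since the degeneration analysis on open moduli --- tracking contributions from boundary nodes against the $\lambda_{2g}$-vanishing on non-compact-type loci --- is considerably more delicate than its closed counterpart.
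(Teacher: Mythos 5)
This statement is stated in the paper as a \emph{conjecture}, and the paper offers no proof of it --- only motivation: the genus-$0$ open-closed correspondence~\eqref{eq:open-closed correspondence}, the explicit demonstration that a naive F-CohFT satisfying~\eqref{eq:wrong F-CohFT,property 1}--\eqref{eq:wrong F-CohFT,property 3} cannot exist (the flows $\frac{\d}{\d t^1_1}$ and $\frac{\d}{\d t^2_0}$ fail to commute), and the observation that $c^{2,\ext}_{g,k+l+1}(e^1\otimes e_1^{\otimes l}\otimes e_2^{\otimes k})=\delta_{k,0}\lambda_g$. Your proposal does not close this gap: it is a research program, not a proof, and its first and essential step --- producing a rigorous geometric construction of the integrals $\int_{\oM_{2g,k,l,1}}\lambda_{2g}\prod\psi_i^{d_i}$ --- is precisely the existence assertion that the conjecture makes. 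You acknowledge this yourself in your final paragraph, but that acknowledgment means the argument never gets off the ground: without the construction there is no left-hand side to compare, and the conjecture is vacuously "provable" by defining the open integrals to equal the right-hand side, which is clearly not the intended content.

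Beyond that structural circularity, the later steps also contain unjustified leaps. To invoke Lemma~\ref{lemma:second uniqueness lemma} you would need to first prove that the open generating function satisfies an evolutionary hierarchy of the precise polynomial form and homogeneity required by that lemma --- i.e.\ open analogues of the string, dilaton and topological recursion relations \emph{in the presence of} $\lambda_{2g}$, together with polynomiality of the flows. None of this is known, and the degeneration analysis at boundary nodes of bordered surfaces (where $\lambda_{2g}$ would have to interact with the real boundary strata rather than with compact-type strata as in Hain's formula) is exactly where the closed-side techniques of Sections~\ref{subsection:proofs} and the proof of Theorem~\ref{theorem:DR hierarchy} stop applying. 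Finally, the uniqueness step would only identify the two generating functions if they satisfied the \emph{same} hierarchy with the \emph{same} initial data; since the conjecture allows arbitrary constants $C(g,k)$ depending on both $g$ and $k$, the open side is not literally a solution of the same system, and one would need an additional argument organizing these constants. In short: the proposal is a reasonable plan of attack consistent with the paper's motivation, but it proves nothing, and the paper itself leaves the statement open.
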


\end{document}